\providecommand{\U}[1]{\protect\rule{.1in}{.1in}}
\numberwithin{equation}{section}
\newtheorem{theorem}{Theorem}[section]
\newtheorem{corollary}{Corollary}[section]
\newtheorem{lemma}{Lemma}[section]
\newtheorem{proposition}{Proposition}[section]
\newtheorem{remark}{Remark}[section]
\newtheorem{definition}{Definition}[section]
\numberwithin{equation}{section}
\newcommand{\bbr}{\mathbb{R}}
\newcommand{\ve}{\varepsilon}
\newcommand{\bd}{\begin{definition}}
\newcommand{\ed}{\end{definition}}
\newcommand{\br}{\begin{remark}}
\newcommand{\er}{\end{remark}}
\newcommand{\be}{\begin{equation}}
\newcommand{\ee}{\end{equation}}
\newcommand{\bc}{\begin{corollary}}
\newcommand{\ec}{\end{corollary}}
\begin{document}

\title[Extremal values of $L^2$-Pohozaev manifolds]{Extremal values of $L^2$-Pohozaev manifolds and their applications}

\author[T. Liu]{Taicheng Liu}
\address{\noindent School of Mathematics, China
University of Mining and Technology, Xuzhou, 221116, P.R. China}
\email{2721771800@qq.com}

\author[Y.Wu]{Yuanze Wu}
\address{\noindent  School of Mathematics, China
University of Mining and Technology, Xuzhou, 221116, P.R. China}
\email{wuyz850306@cumt.edu.cn}

\begin{abstract}
In this paper, we consider the following Schr\"{o}dinger equation:
\begin{equation*}
\begin{cases}
-\Delta u=\lambda u+\mu|u|^{q-2}u+|u|^{2^*-2}u\quad\text{in }\mathbb{R}^N,\\
\int_{\mathbb{R}^N}|u(x)|^2dx=a,\quad u\in H^1(\mathbb{R}^N),\\
\end{cases}
\end{equation*}
where $N\ge 3$, $2<q<2+\frac{4}{N}$, $a, \mu>0$, $2^*=\frac{2N}{N-2}$ is the critical Sobolev exponent and $\lambda\in \mathbb{R}$ is one of the unknowns in the above equation which appears as a Lagrange multiplier.  By applying the minimization method on the $L^2$-Pohozaev manifold, we prove that if $N\geq3$, $q\in\left(2,2+\frac{4}{N}\right)$, $a>0$ and $0<\mu\leq\mu^{*}_{a}$, then the above equation has two positive solutions which are real valued, radially symmetric and radially decreasing,
where
\begin{equation*}
\mu^*_a=\frac{(2^*-2)(2-q\gamma_q)^{\frac{2-q\gamma_q}{2^*-2}}}{\gamma_q(2^*-q\gamma_q)^{\frac{2^*-q\gamma_q}{2^*-2}}}\inf_{u\in H^1(\mathbb{R}^N), \|u\|_{2}^2=a}\frac{\left(\|\nabla u\|_2^2\right)^\frac{2^*-q\gamma_q}{2^*-2}}{\|u\|_q^q\left(\|u\|_{2^*}^{2^*}\right)^{\frac{2-q\gamma_q}{2^*-2}}}.
\end{equation*}
Our results improve the conclusions of \cite{JeanjeanLe2021,JeanjeanJendrejLeVisciglia2022,Soave2020-2,WeiWu2022} and we hope that our proofs and discussions in this paper could provide new techniques and lights to understand the structure of the set of positive solutions of the above equations.

\vspace{3mm} \noindent{\bf Keywords:} Schr\"odinger equation, Sobolev exponent, Variational method, Pohozaev manifold, Extremal value.

\vspace{3mm}\noindent {\bf AMS} Subject Classification 2010: 35B09; 35B33; 35J20.%

\end{abstract}

\date{}
\maketitle

\section{Introduction}
The Schr\"{o}dinger equation is one of the most famous models in mathematics and physics.  The typical nonlinear model reads as
\begin{eqnarray}\label{eqnWu1001}
\iota\psi_t+\Delta \psi+g\left(\left|\psi\right|^2\right)\psi=0\quad\text{in }\mathbb{R}^N,
\end{eqnarray}
where $\iota$ is the imaginary unit, $\psi$ is the macroscopic wave function and $g\left(\left|\psi\right|^2\right)$ is the nonlinear potential.  This typical model~\eqref{eqnWu1001} is widely used in many physical fields such as nonlinear optics, plasma, Bose-Einstein condensates, superconducting mean fields and so on.  In many applications, the natural choice of the nonlinear potential is that $g\left(\left|\psi\right|^2\right)=\left|\psi\right|^2$, which describes the attractive interplay of atoms in medium density or laser beams in some kinds of materials.  Sometimes, to describe  the physical phenomenon more precisely, the nonlinear potential will be chosen to be the cubic-quintic type (cf. \cite{KillipOhPocovnicuVisan2017}), that is,  $g\left(\left|\psi\right|^2\right)=\left|\psi\right|^2-\left|\psi\right|^4$, which is a special form of the mathematical generalization $g\left(\left|\psi\right|^2\right)=\mu\left|\psi\right|^{q}+\nu\left|\psi\right|^{p}$ (called combined or mixed nonlinearities in mathematical references if $q\not=p$), where $q,p>1$ and $\mu,\nu\in\bbr$.  The studies on the Schr\"{o}dinger equation~\eqref{eqnWu1001} with combined nonlinearities were initialed by Tao-Visan-Zhang in \cite{TaoVisanZhang2007}, where local well posedness of the unique strong solution is established.  Moreover, it has also known that the unique strong solution also has conservations of energy and mass.  Some further related studies on  the Schr\"{o}dinger equation~\eqref{eqnWu1001} with combined nonlinearities can be found in \cite{AkahoriIbrahimKikuchiNawa2021,ChengMiaoZhao2016,KillipOhPocovnicuVisan2017,MiaoXuZhao2013,MiaoZhaoZheng2017} and the references therein.  

\vskip0.12in

In understanding the global dynamics (cf. stability and instability), the standing waves of the Schr\"{o}dinger equation~\eqref{eqnWu1001}, which are of the form $\psi(t,x)=e^{i\lambda t}u(x)$ where $\lambda\in\bbr$ is a constant, are crucial.  The time-independent function $u(x)$ in the standing waves satisfies the following elliptic equation:
\begin{eqnarray}\label{eqnWu1002}
\left\{\aligned  -\Delta u&=\lambda u+f(u)\quad\text{in }\mathbb{R}^N,\\
\int_{\mathbb{R}^N}u^2&=a,\quad u\in H^1(\mathbb{R}^N),\endaligned\right.
\end{eqnarray} 
where $f(u)=g\left(|u|^2\right)u$ and $a$ is the mass of the initial data according to the conservation of mass of the Schr\"odinger flow.  The studies on \eqref{eqnWu1002}, started from the semi-classical papers \cite{BerestyckiLions1983,BerestyckiLions1983-1,Lions1984,Lions1984-1,Strauss1977}, have a long history.  In particular, in recent papers \cite{Soave2020-1,Soave2020-2}, Soave initialed the studies on the special form of \eqref{eqnWu1002} with combined nonlinearities
\begin{equation}\label{eqnWu0008}
\left\{\aligned -\Delta u=\lambda u&+\mu|u|^{q-2}u+|u|^{p-2}u, x\in \mathbb{R}^N,\\
\int_{\mathbb{R}^N}u^2&=a,\quad u\in H^1(\mathbb{R}^N)\endaligned\right.
\end{equation}
where $2<q<p\leq2^*$ and $\mu>0$.  The special case of $p=2^*$ reads as
\begin{equation}\label{eqd106}
\left\{\aligned -\Delta u&=\lambda u+\mu|u|^{q-2}u+|u|^{2^*-2}u, x\in \mathbb{R}^N,\\
\int_{\mathbb{R}^N}u^2&=a,\quad u\in H^1(\mathbb{R}^N).\endaligned\right.
\end{equation}
To precisely state Soave's results in \cite{Soave2020-2}, we need to introduce some necessary notations and definitions.  The corresponding functional of \eqref{eqd106} is given by 
\begin{equation*}
\Psi_{\mu}(u)=\frac{1}{2}\|\nabla u\|_2^2-\frac{\mu}{q}\|u\|_q^q-\frac{1}{2^*}\|u\|_{2^*}^{2^*},
\end{equation*}
where $\|\cdot\|_{p}$ is the usual norm in the Lebesgue space $L^p(\bbr^N)$.
The $L^2$-Pohozaev manifold of $\Psi_{\mu}(u)$, which is introduced by Bartsch and Soave in \cite{BartschSoave2017}, is defined by
\begin{eqnarray*}
\mathcal{P}_{a,\mu}=\{u\in\mathcal{S}_a\mid \|\nabla u\|_2^2=\mu \gamma_q\|u\|_q^q+\|u\|_{2^*}^{2^*}\},
\end{eqnarray*}
where $\gamma_q=\frac{N(q-2)}{2q}$ and
\begin{equation*}
\mathcal{S}_a=\left\{u\in H^1(\mathbb{R}^N)\mid\|u\|_2^2=a\right\}.
\end{equation*}
We say that $\hat{u}$ is a ground-state solution of the equation~\eqref{eqd106}, if $\hat{u}$ solves \eqref{eqd106} for some suitable $\lambda\in\bbr$ and $\Psi_\mu(\hat{u})=\inf_{u\in \mathcal{P}_{a, \mu}}\Psi_\mu(u)$.
Now, one of Soave's results in the recent paper \cite{Soave2020-2} can be stated as follows.
\begin{theorem}\label{theo1}
Let $N\ge 3$ and $2<q<2+\frac{4}{N}$.  Then there exists $\alpha_{N,q}=\min\{C_1,C_2\}$, with 
\begin{align*}
C_1&=\left(\frac{2^*S^{\frac{2^*}{2}}(2-q\gamma_q)}{2(2^*-q\gamma_q)}\right)^{\frac{2-q\gamma_q}{2^*-2}}\frac{q(2^*-2)}{2C_{N,q}^q(2^*-q\gamma_q)},\\
C_2&=\frac{22^*}{N\gamma_qC_{N,q}^q(2^*-q\gamma_q)}\left(\frac{q\gamma_qS^{\frac{N}{2}}}{2-q\gamma_q}\right)^{\frac{2-q\gamma_q}{2}}
\end{align*}
and $C_{N,q}$ and $S$ the optimal constants of the Gagliardo-Nirenberg and Sobolev inequalities, such that if $0<\mu a^{\frac{q-q\gamma_q}{2}}<\alpha_{N,q}$ then the equation~\eqref{eqd106} has a ground-state solution $\hat{u}_{a,\mu}$.
\end{theorem}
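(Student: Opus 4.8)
The plan is to realize the ground state as a constrained minimizer on the positive part of the Pohozaev manifold, exploiting the mass-preserving scaling $u_s(x):=s^{N/2}u(sx)$. Since $\|u_s\|_2^2=\|u\|_2^2$, $\|\nabla u_s\|_2^2=s^2\|\nabla u\|_2^2$, $\|u_s\|_q^q=s^{q\gamma_q}\|u\|_q^q$ and $\|u_s\|_{2^*}^{2^*}=s^{2^*}\|u\|_{2^*}^{2^*}$, the fiber map $\psi_u(s):=\Psi_\mu(u_s)=\frac{s^2}{2}\|\nabla u\|_2^2-\frac{\mu}{q}s^{q\gamma_q}\|u\|_q^q-\frac{1}{2^*}s^{2^*}\|u\|_{2^*}^{2^*}$ has the property that $u_s\in\mathcal P_{a,\mu}$ precisely when $\psi_u'(s)=0$. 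Because $0<q\gamma_q<2<2^*$ in the stated range of $q$, the map $s\mapsto\psi_u'(s)/s$ tends to $-\infty$ at both ends and has a single interior maximum; hence $\psi_u$ is negative and decreasing near $s=0$, and $\psi_u(s)\to-\infty$ as $s\to\infty$. I would first show that the smallness condition $0<\mu a^{(q-q\gamma_q)/2}<\alpha_{N,q}=\min\{C_1,C_2\}$ forces that interior maximum to be strictly positive for every $u\in\mathcal S_a$, so that $\psi_u$ has exactly two nondegenerate critical points $0<s_u^+<s_u^-$, a local minimum and a local maximum. The constants $C_1,C_2$ arise exactly here, after bounding $\|u\|_q^q$ and $\|u\|_{2^*}^{2^*}$ in terms of $\|\nabla u\|_2^2$ and $a$ by the Gagliardo--Nirenberg and Sobolev inequalities; they are tuned so that $\mathcal P^0_{a,\mu}=\emptyset$ and $\mathcal P_{a,\mu}=\mathcal P^+_{a,\mu}\sqcup\mathcal P^-_{a,\mu}$, where the sign records that of $\psi_u''$.

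Next I would set $m(a,\mu):=\inf_{\mathcal P^+_{a,\mu}}\Psi_\mu$ and argue this is the ground-state level. On $\mathcal P_{a,\mu}$ the constraint eliminates one term and gives $\Psi_\mu(u)=\frac1N\|\nabla u\|_2^2-\mu\bigl(\frac1q-\frac{\gamma_q}{2^*}\bigr)\|u\|_q^q$. Inserting $\|u\|_q^q\le C_{N,q}^q\|\nabla u\|_2^{q\gamma_q}a^{q(1-\gamma_q)/2}$ yields a lower bound of the form $\frac1N t-c\,t^{q\gamma_q/2}$ in $t=\|\nabla u\|_2^2$; since $q\gamma_q/2<1$ this is bounded below and coercive, so $m(a,\mu)>-\infty$ and any minimizing sequence has $\|\nabla u_n\|_2$ bounded, hence is bounded in $H^1$. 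Testing with the local minimum $s_u^+$ of a fixed fiber map, where $\psi_u(s_u^+)<\lim_{s\to0^+}\psi_u(s)=0$, shows $m(a,\mu)<0$; in particular $m(a,\mu)<\frac1N S^{N/2}$, the energy of an Aubin--Talenti bubble. A comparison of local minimum versus local maximum values along each fiber shows the global infimum over $\mathcal P_{a,\mu}$ is attained on $\mathcal P^+_{a,\mu}$.

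For compactness I would pass to a radially symmetric, radially decreasing minimizing sequence by Schwarz symmetrization (which preserves every $L^p$ norm and membership in $\mathcal S_a$, does not increase $\|\nabla u\|_2$, and does not raise the fiber-projected energy), extract a weak limit $u_0$ in $H^1_{rad}$, and use the compact embedding $H^1_{rad}(\mathbb R^N)\hookrightarrow L^q(\mathbb R^N)$, $2<q<2^*$, to get $\|u_n\|_q\to\|u_0\|_q$. The delicate point is the Sobolev-critical term, which could lose compactness through concentration; but a Brezis--Lieb decomposition shows any escaping bubble carries energy at least $\frac1N S^{N/2}$, contradicting $m(a,\mu)<\frac1N S^{N/2}$, while vanishing of the $L^2$-mass is excluded because $m(a,\mu)<0$ requires a nontrivial $L^q$ part. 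Thus $u_n\to u_0$ strongly, $u_0\in\mathcal P^+_{a,\mu}$ and $\Psi_\mu(u_0)=m(a,\mu)$.

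Finally, since $\mathcal P_{a,\mu}$ is a natural constraint (its defining functional and the mass functional have independent differentials on $\mathcal P^+_{a,\mu}$, using $\psi''_u\neq0$ there), the minimizer $u_0$ is a critical point of $\Psi_\mu$ on $\mathcal S_a$ and hence solves \eqref{eqd106} with a Lagrange multiplier $\lambda$; combining the equation with its Pohozaev identity gives $\lambda<0$. Replacing $u_0$ by $|u_0|$ and invoking the strong maximum principle together with elliptic regularity upgrades it to a positive, radially symmetric and radially decreasing solution $\hat u_{a,\mu}$, which is the asserted ground state. I expect the main obstacle to be the first step---pinning down the exact thresholds $C_1,C_2$ that guarantee $\mathcal P^0_{a,\mu}=\emptyset$ together with the clean minimum/maximum fiber structure---rather than the compactness, which is comparatively soft because the ground-state level is negative and therefore strictly below the critical threshold $\frac1N S^{N/2}$.
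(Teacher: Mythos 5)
Your overall strategy is the right one, and it is essentially the scheme used both in Soave's original proof and in this paper: note that Theorem~1.1 is quoted here from \cite{Soave2020-2} without proof, but the paper's Proposition~3.1 establishes the stronger statement (existence of the ground state for all $0<\mu<\mu_a^*$) by exactly the steps you outline --- fiber-map analysis giving $\mathcal{P}^0_{a,\mu}=\emptyset$ and the decomposition $\mathcal{P}_{a,\mu}=\mathcal{P}^+_{a,\mu}\sqcup\mathcal{P}^-_{a,\mu}$, coercivity and negativity of the level via Gagliardo--Nirenberg, Schwarz symmetrization plus Strauss's radial compactness for the subcritical norms, a Brezis--Lieb splitting for the critical term, and the Lagrange multiplier identity $\lambda a=\mu(\gamma_q-1)\|u\|_q^q<0$.

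The one place your sketch does not close as written is the compactness step, in two respects. First, the dichotomy ``either $v_n\to0$ in $\dot H^1$ or $\|\nabla v_n\|_2^2\ge S^{N/2}+o(1)$'' rests on $\|\nabla v_n\|_2^2=\|v_n\|_{2^*}^{2^*}+o(1)$, which in turn requires the weak limit $u_0$ to satisfy the Pohozaev identity $\|\nabla u_0\|_2^2=\mu\gamma_q\|u_0\|_q^q+\|u_0\|_{2^*}^{2^*}$ at its own mass $a_1$. A weak limit of a bare minimizing sequence need not satisfy this; the paper therefore first upgrades the minimizing sequence to a Palais--Smale sequence via Ekeland's principle on a $\delta$-neighborhood $\mathcal{P}^{\delta,+}_{a,\mu}$ (justifying this by showing the infimum does not drop on that neighborhood), extracts the multipliers $\lambda_n\to\lambda_0<0$, and deduces that $u_0$ solves the equation --- hence lies on $\mathcal{P}_{a_1,\mu}$ --- before running the splitting. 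Second, the contradiction you draw from an escaping bubble, namely that it ``contradicts $m(a,\mu)<\frac1N S^{N/2}$,'' is not valid as stated: the splitting gives $m(a,\mu)=\Psi_\mu(u_0)+\frac1N\|\nabla v_n\|_2^2+o(1)$ with $\Psi_\mu(u_0)$ possibly negative, so no contradiction with $m(a,\mu)<\frac1N S^{N/2}$ follows. What actually closes the argument --- both for the escaping bubble and for the mass-dichotomy case $0<\|u_0\|_2^2=a_1<a$, which your sketch does not address at all --- is the monotonicity $m^+(a_1,\mu)\ge m^+(a,\mu)$ for $a_1\le a$, strict when the infimum at mass $a_1$ is attained (part $(b)$ of Lemma~3.1 in the paper), which turns the splitting into $m(a,\mu)\ge m(a,\mu)+\frac1N S^{N/2}$ and forces $a_1=a$ and $v_n\to0$ in $H^1$. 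Both repairs are standard, but they are genuine missing steps rather than routine omissions.
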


\vskip0.12in

In the $L^2$-subcritical case $2<q<2+\frac{4}{N}$, since $\Psi_\mu(u)|_{\mathcal{S}_a}$ is unbounded from below, it could be naturally to expect that $\Psi_\mu(u)|_{\mathcal{S}_a}$ has a second critical point of the mountain-pass type, which is also positive, real valued and radially symmetric.  This natural expectation has been pointed out by Soave in \cite[Remark~1.1]{Soave2020-2} which can be summarized to be the following question:
\begin{enumerate}
\item[$(Q_1)$]\quad {\bf Does $\Psi_\mu(u)|_{\mathcal{S}_a}$ have a critical point of the mountain-pass type in the $L^2$-subcritical case $2<q<2+\frac{4}{N}$?}
\end{enumerate}
The question~$(Q_1)$ has been explored by Jeanjean and Le in \cite{JeanjeanLe2021} and Wei and Wu in \cite{WeiWu2022}, independently.  To state the improvements of Jeanjean-Le and Wei-Wu precisely, we need to introduce more notations.  By the fibering maps
\begin{eqnarray*}
\Phi_{\mu,u}(s)=\frac{e^{2s}}{2}\|\nabla u\|_2^2-\frac{\mu e^{q\gamma_qs}}{q}\|u\|_q^q-\frac{e^{2^*s}}{2^*}\|u\|_{2^*}^{2^*},
\end{eqnarray*}
which is introduced by Jeanjean in \cite{Jeanjean1997}, $\mathcal{P}_{a,\mu}$ can be naturally divided into the following three parts:
\begin{eqnarray*}
\mathcal{P}^+_{a,\mu}=\{u\in\mathcal{P}_{a,\mu} \mid 2\|\nabla u\|_2^2>\mu q\gamma^2_q\|u\|_q^q+2^*\|u\|_{2^*}^{2^*}\},\\
\mathcal{P}^0_{a,\mu}=\{u\in\mathcal{P}_{a,\mu} \mid 2\|\nabla u\|_2^2=\mu q\gamma^2_q\|u\|_q^q+2^*\|u\|_{2^*}^{2^*}\},\\
\mathcal{P}^-_{a,\mu}=\{u\in\mathcal{P}_{a,\mu} \mid 2\|\nabla u\|_2^2<\mu q\gamma^2_q\|u\|_q^q+2^*\|u\|_{2^*}^{2^*}\}.
\end{eqnarray*}
Let
\begin{eqnarray}\label{eqnnWu0099}
m^\pm(a,\mu)=\inf_{u\in \mathcal{P}_{a, \mu}^\pm}\Psi_\mu(u).
\end{eqnarray}
Then we say that $\hat{u}$ is a mountain-pass solution of the equation~\eqref{eqd106}, if $\hat{u}$ solves \eqref{eqd106} for some suitable $\lambda\in\bbr$ and $\Psi_\mu(\hat{u})=m^-(a,\mu)$.  Now, we can summarize the main results in \cite{JeanjeanLe2021,WeiWu2022} into the following theorem.
\begin{theorem}\label{theo2}
Let $N\ge 3$, $2<q<2+\frac{4}{N}$ and $a>0$.  If $0<\mu a^{\frac{q-q\gamma_q}{2}}<\alpha_{N,q}$ then the equation~\eqref{eqd106} has two solutions $u_{a,\mu,\pm}\in H^1(\bbr^N)$ which are real valued, positive, radially symmetric and radially decreasing.  Moreover, $u_{a,\mu,+}$ is a ground-state solution and $u_{a,\mu,-}$ is a mountain-pass solution.
\end{theorem}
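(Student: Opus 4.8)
The plan is to produce the two solutions as constrained minimizers of $\Psi_\mu$ over the two components $\mathcal{P}^+_{a,\mu}$ and $\mathcal{P}^-_{a,\mu}$, recovering compactness by keeping both levels strictly below the first critical Sobolev threshold $\frac1N S^{N/2}$. The starting point is a careful analysis of the fibering maps $\Phi_{\mu,u}$, noting that $u_s(x):=e^{Ns/2}u(e^sx)$ preserves the $L^2$ norm and satisfies $\Phi_{\mu,u}(s)=\Psi_\mu(u_s)$, so that $u_s\in\mathcal{P}_{a,\mu}$ exactly when $\Phi_{\mu,u}'(s)=0$. Since $2<q<2+\frac4N$ forces $q\gamma_q<2<2^*$, one checks $\Phi_{\mu,u}(s)\to0^-$ as $s\to-\infty$ and $\Phi_{\mu,u}(s)\to-\infty$ as $s\to+\infty$. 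Under the smallness hypothesis $\mu a^{\frac{q-q\gamma_q}{2}}<\alpha_{N,q}$ --- which enters precisely through the Gagliardo--Nirenberg and Sobolev inequalities used to bound $\|u\|_q^q$ and $\|u\|_{2^*}^{2^*}$ by powers of $\|\nabla u\|_2^2$ uniformly on $\mathcal{S}_a$ --- I would show that for every $u\in\mathcal{S}_a$ the map $\Phi_{\mu,u}$ has exactly two critical points $s^+(u)<s^-(u)$, a strict local minimum with $u_{s^+(u)}\in\mathcal{P}^+_{a,\mu}$ and a strict local maximum with $u_{s^-(u)}\in\mathcal{P}^-_{a,\mu}$ (recall $\Phi_{\mu,u}''(0)=2\|\nabla u\|_2^2-\mu q\gamma_q^2\|u\|_q^q-2^*\|u\|_{2^*}^{2^*}$ on $\mathcal{P}_{a,\mu}$). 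A crucial byproduct is $\mathcal{P}^0_{a,\mu}=\emptyset$, so that $\mathcal{P}_{a,\mu}=\mathcal{P}^+_{a,\mu}\cup\mathcal{P}^-_{a,\mu}$ is a disjoint union of two smooth submanifolds of $\mathcal{S}_a$ and the Lagrange-multiplier formalism is available on each piece.

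For the ground state I set $m^\pm(a,\mu)=\inf_{\mathcal{P}^\pm_{a,\mu}}\Psi_\mu$. The fibering analysis yields $\Psi_\mu(u_{s^+(u)})\le\Psi_\mu(u_{s^-(u)})$ along every fiber, whence $m^+(a,\mu)\le m^-(a,\mu)$ and $m^+(a,\mu)=\inf_{\mathcal{P}_{a,\mu}}\Psi_\mu<0$, so that any minimizer over $\mathcal{P}^+_{a,\mu}$ is genuinely a ground state. Taking a minimizing sequence, I would replace each term by its Schwarz symmetrization, which preserves the $L^2$, $L^q$ and $L^{2^*}$ norms, does not increase $\|\nabla u\|_2$, and --- after reprojecting along the fiber --- keeps us on $\mathcal{P}^+_{a,\mu}$; this simultaneously produces the radial symmetry and monotonicity and restricts the problem to $H^1_{\mathrm{rad}}(\bbr^N)$. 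Boundedness in $H^1$ follows from the constraint together with the coercivity afforded by $q\gamma_q<2$ (on $\mathcal{P}_{a,\mu}$ one has $\Psi_\mu=\frac1N\|\nabla u\|_2^2-\mu(\frac1q-\frac{\gamma_q}{2^*})\|u\|_q^q$, whose subcritical correction grows slower than the leading term). Passing to a weak limit and using the compact embedding $H^1_{\mathrm{rad}}(\bbr^N)\hookrightarrow L^q(\bbr^N)$ leaves the critical term as the only source of noncompactness. Because $m^+(a,\mu)<0<\frac1N S^{N/2}$, a concentration-compactness argument (or a direct Brezis--Lieb splitting) excludes the escape of a critical Sobolev bubble, giving strong convergence, a minimizer, and a Lagrange multiplier $\lambda$; positivity then follows by passing to $|u|$ and invoking the strong maximum principle.

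For the mountain-pass solution I would work on $\mathcal{P}^-_{a,\mu}$ and, via Ekeland's variational principle on this submanifold (equivalently, a constrained minimax along the fibers), extract a Palais--Smale sequence for $\Psi_\mu|_{\mathcal{S}_a}$ at the level $m^-(a,\mu)$, again taken radial by symmetrization. The decisive step, and the main obstacle, is the strict energy estimate $m^-(a,\mu)<\frac1N S^{N/2}$: this is the compactness threshold below which no critical bubble can escape in the radial class. I would establish it by inserting a truncated, suitably dilated Aubin--Talenti instanton into the fibering map and optimizing in the dilation parameter; the truncation is forced by the $L^2$-mass (and dimension-dependent integrability) of the instanton, and the subcritical perturbation $\mu|u|^{q-2}u$ strictly lowers the energy beneath the pure-critical level precisely under the stated bound on $\mu$, which is why the estimate becomes delicate as $\mu a^{\frac{q-q\gamma_q}{2}}$ approaches $\alpha_{N,q}$. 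With this in hand the radial Palais--Smale sequence converges strongly, and its limit $u_{a,\mu,-}$ is a second positive, radial, radially decreasing solution realizing the mountain-pass level, which completes the proof.
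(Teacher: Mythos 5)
Your overall architecture (fibering maps, emptiness of $\mathcal{P}^0_{a,\mu}$, minimization over $\mathcal{P}^{\pm}_{a,\mu}$ in the radial class, compactness below a critical threshold) is the right one and matches what the paper does for the ground state. The gap is in the compactness threshold for the mountain-pass level. You claim that $m^-(a,\mu)<\frac{1}{N}S^{\frac{N}{2}}$ suffices to exclude the escape of a bubble. It does not: for a radial Palais--Smale sequence $u_n\rightharpoonup u_0$ with $v_n=u_n-u_0$, the splitting gives
\begin{equation*}
m^-(a,\mu)=\Psi_\mu(u_0)+\tfrac{1}{N}\|\nabla v_n\|_2^2+o(1),
\end{equation*}
and in the dichotomy case $u_0\not=0$ with $\|\nabla v_n\|_2^2\ge S^{\frac{N}{2}}+o(1)$ one only gets the lower bound $m^-(a,\mu)\ge m^+(a_1,\mu)+\frac{1}{N}S^{\frac{N}{2}}\ge m^+(a,\mu)+\frac{1}{N}S^{\frac{N}{2}}$, where $a_1=\|u_0\|_2^2\le a$. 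Since $m^+(a,\mu)<0$, this bound is \emph{strictly below} $\frac{1}{N}S^{\frac{N}{2}}$, so your estimate leaves open the scenario in which the weak limit is a ground state of (possibly smaller) mass and a bubble still escapes. The estimate you actually need is the strict inequality $m^-(a,\mu)<m^+(a,\mu)+\frac{1}{N}S^{\frac{N}{2}}$ (Lemma~\ref{lem002}$(a)$ here, going back to \cite{JeanjeanLe2021,WeiWu2022}), and correspondingly the test function cannot be a truncated Aubin--Talenti bubble alone: one must glue the bubble onto the already-constructed ground state, i.e.\ test with a mass-renormalized $u_{a,\mu,+}+tW_\ve$ and use the interaction term $\int u_{a,\mu,+}W_\ve^{2^*-1}\sim\ve^{\frac{N-2}{2}}$. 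This is not a cosmetic point: for $N=3$ (and essentially $N=4$) the gain $\mu\|W_\ve\|_q^q$ from the subcritical term alone does not dominate the truncation and mass-renormalization errors, so the pure-bubble computation fails to close even the weaker inequality in low dimensions.

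Two further remarks. First, the upgrade from a Palais--Smale sequence for $\Psi_\mu|_{\mathcal{P}^-_{a,\mu}}$ to one for $\Psi_\mu|_{\mathcal{S}_a}$ requires verifying that the Pohozaev constraint is natural (the extra multiplier vanishes because $\mathcal{P}^0_{a,\mu}=\emptyset$), and symmetrization should be applied to the minimizing sequence before, not after, extracting the Palais--Smale sequence. Second, for the range $0<\mu a^{\frac{q-q\gamma_q}{2}}<\alpha_{N,q}$ your direct minimax on $\mathcal{P}^-_{a,\mu}$ is viable once the threshold is corrected (it is the route of \cite{JeanjeanLe2021,WeiWu2022}); the present paper instead reaches $m^-(a,\mu)$ by approximating with Sobolev-subcritical problems $p\uparrow2^*$ (Proposition~\ref{003}) and passing to the limit, a detour that is needed only because for $\mu$ close to $\mu^*_a$ the set $\mathcal{P}^-_{a,\mu}$ consists merely of local maxima of the fibering maps.
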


\vskip0.12in

Besides, since Soave only considered the case that $\mu a^{q-q\gamma_q}>0$ small in \cite[Theorem~1.1]{Soave2020-2} in the $L^2$-subcritical case $2<q<2+\frac{4}{N}$, it is also natural to ask what will happen if $\mu>0$ and $\mu a^{q-q\gamma_q}>0$ is large in the $L^2$-subcritical case $2<q<2+\frac{4}{N}$.  This natural question has been proposed by Soave in \cite{Soave2020-2} as an open problem, which can be summarized to be the following one:
\begin{enumerate}
\item[$(Q_2)$]\quad {\bf Does $\Psi_\mu(u)|_{\mathcal{S}_a}$ have a ground state if $\mu>0$ and $\mu a^{q-q\gamma_q}>0$ large in the $L^2$-subcritical case $2<q<2+\frac{4}{N}$?}
\end{enumerate}
The question~$(Q_2)$ is explored by Wei and Wu in \cite{WeiWu2023}.  By observing that finding positive solutions of the equation~\eqref{eqd106} is equivalent to finding solutions of the equation
\begin{eqnarray}\label{eqnWu1020}
t^{\frac{2}{q\gamma_q-q}-1}-\frac{1-\gamma_q}{a\mu^{\frac{2}{q-q\gamma_q}}}\|v_t\|_q^q=0
\end{eqnarray}
via the Pohozaev identity, where $v_t$ is a positive solution of the following equation
\begin{eqnarray}\label{eqnnWu1020}
\left\{\aligned
&-\Delta v+v=t |v|^{q-2}v+|v|^{2^*-2}v\quad\text{in }\mathbb{R}^N,\\
&v\in H^1(\bbr^N),
\endaligned\right.
\end{eqnarray}
Wei and Wu proved the following theorem in \cite{WeiWu2023}.
\begin{theorem}\label{theo4}
Let $N\ge 3$, $2<q<2+\frac{4}{N}$ and $a>0$.  Then there exists $\hat{\mu}_a\geq a^{-\frac{q-q\gamma_q}{2}}\alpha_{N,q}$ such that the equation~\eqref{eqd106} has no positive solutions for $\mu>\hat{\mu}_a$.
\end{theorem}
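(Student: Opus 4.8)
The plan is to reduce the non-existence question to the one-dimensional problem recorded in \eqref{eqnWu1020} and then to show that the range of admissible $\mu$ is bounded above. Concretely, if $u$ is a positive solution of \eqref{eqd106} with multiplier $\lambda$, then testing \eqref{eqd106} against $u$ and combining with the Pohozaev identity eliminates $\|\nabla u\|_2^2$ and $\|u\|_{2^*}^{2^*}$ and yields $\lambda a=-\mu(1-\gamma_q)\|u\|_q^q<0$, so $\lambda<0$ and $u$ lies on $\mathcal{P}_{a,\mu}$. Writing $\lambda=-\sigma$ with $\sigma>0$ and setting $v(x)=\sigma^{-\frac{N-2}{4}}u(\sigma^{-1/2}x)$, one checks that $v=v_t$ is a positive solution of \eqref{eqnnWu1020} with $t=\mu\sigma^{\frac{q\gamma_q-q}{2}}$ and that the constraint $\|u\|_2^2=a$ becomes $\sigma=\|v_t\|_2^2/a$. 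Thus positive solutions of \eqref{eqd106} correspond to pairs $(t,v_t)$ solving \eqref{eqnWu1020}, and the task becomes to show that the values of $\mu$ attached to such pairs stay bounded.

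First I would record the identities for \eqref{eqnnWu1020}. The Nehari identity $\|\nabla v_t\|_2^2+\|v_t\|_2^2=t\|v_t\|_q^q+\|v_t\|_{2^*}^{2^*}$ together with the Pohozaev identity gives, after eliminating $\|\nabla v_t\|_2^2$ and $\|v_t\|_{2^*}^{2^*}$, the relation
\begin{equation*}
\|v_t\|_2^2=(1-\gamma_q)\,t\,\|v_t\|_q^q.
\end{equation*}
Substituting this into \eqref{eqnWu1020} and using $\sigma=\|v_t\|_2^2/a$ shows that the value of $\mu$ attached to $(t,v_t)$ equals $\Lambda(t,v_t)^{\frac{q-q\gamma_q}{2}}$, where
\begin{equation*}
\Lambda(t,v_t)=\frac{\|v_t\|_2^2}{a}\,t^{\frac{2}{q-q\gamma_q}}.
\end{equation*}
I would then \emph{define} $\hat{\mu}_a:=\sup\{\mu>0:\eqref{eqd106}\text{ has a positive solution}\}$. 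The lower bound $\hat{\mu}_a\ge a^{-\frac{q-q\gamma_q}{2}}\alpha_{N,q}$ is immediate from Theorem~\ref{theo1} (or Theorem~\ref{theo2}), since positive solutions exist whenever $\mu a^{\frac{q-q\gamma_q}{2}}<\alpha_{N,q}$. Consequently everything reduces to proving $\hat{\mu}_a<\infty$, equivalently that $\Lambda$ is bounded above over all admissible pairs; once this is known, no positive solution can exist for $\mu>\hat{\mu}_a$ by the very definition of the supremum.

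The heart of the proof is therefore the a priori bound $\sup_{(t,v_t)}\Lambda(t,v_t)<\infty$, and I would split the analysis by the size of $t$. For $t$ large, rescaling $v_t(x)=t^{-\frac{1}{q-2}}\phi(x)$ turns \eqref{eqnnWu1020} into a subcritical equation whose critical coefficient tends to $0$, so $\phi$ converges to the ground state of $-\Delta\phi+\phi=\phi^{q-1}$; this gives $\|v_t\|_2^2\sim c\,t^{-\frac{2}{q-2}}$ and hence $\Lambda(t,v_t)\to0$, because $\frac{2}{q-q\gamma_q}<\frac{2}{q-2}$ (which is exactly the condition $q\gamma_q<2$). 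On compact subsets of $(0,\infty)$ a uniform bound follows from the identities above together with the Gagliardo--Nirenberg and Sobolev inequalities and a standard compactness/regularity argument for \eqref{eqnnWu1020}. The delicate regime is $t\to0^+$: here \eqref{eqnnWu1020} degenerates to $-\Delta v+v=|v|^{2^*-2}v$, which has no nontrivial solution, so $v_t$ must concentrate like an Aubin--Talenti bubble. The main obstacle is to control $\|v_t\|_2^2\,t^{\frac{2}{q-q\gamma_q}}$ in this limit; this step is genuinely dimension-sensitive, since the $L^2$-norm of the Aubin--Talenti profile is finite only for $N\ge5$, and for $N=3,4$ a Brezis--Nirenberg type expansion of the concentration rate is needed to see that the product nonetheless remains bounded.

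Finally, since \eqref{eqnnWu1020} may possess several positive solutions for a fixed $t$ (a ground state and a mountain-pass solution, matching the two solutions of Theorem~\ref{theo2}), the bound must hold uniformly over all solution branches, not only along the ground-state branch. I would enforce this by a contradiction/compactness argument: assuming positive solutions of \eqref{eqd106} with $\mu_n\to\infty$, I would pass to the rescaled solutions $v_{t_n}$, rule out $t_n\to\infty$ and $t_n$ bounded away from $0$ by the estimates above, and in the remaining case $t_n\to0^+$ reach a contradiction with the Sobolev threshold $\frac1N S^{N/2}$. The expected conclusion is that $\hat{\mu}_a=(\sup_t\Lambda)^{\frac{q-q\gamma_q}{2}}$ is finite and is attained at the fold value of $t$, in agreement with the bifurcation picture behind Theorem~\ref{theo2}.
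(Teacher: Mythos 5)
First, a point of reference: the paper does not actually prove Theorem~\ref{theo4}; it is imported from \cite{WeiWu2023}, and the only ingredient recorded here is the reduction to the scalar equation \eqref{eqnWu1020} via the rescaling $u\mapsto v_t$ and the Pohozaev identity. Your reduction reproduces exactly that step, and the computations check out: $\lambda a=-\mu(1-\gamma_q)\|u\|_q^q<0$, the exponent $t=\mu\sigma^{\frac{q\gamma_q-q}{2}}$, the identity $\|v_t\|_2^2=(1-\gamma_q)\,t\,\|v_t\|_q^q$ obtained from Nehari plus Pohozaev, the resulting formula $\mu^{\frac{2}{q-q\gamma_q}}=\frac{\|v_t\|_2^2}{a}\,t^{\frac{2}{q-q\gamma_q}}$, and the lower bound $\hat{\mu}_a\ge a^{-\frac{q-q\gamma_q}{2}}\alpha_{N,q}$ via Theorem~\ref{theo2} once $\hat{\mu}_a$ is defined as a supremum.

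The gap is that, after this reduction, the entire content of the theorem is the finiteness of $\sup\|v_t\|_2^2\,t^{\frac{2}{q-q\gamma_q}}$ over \emph{all} positive solutions of \eqref{eqnnWu1020} and all $t>0$, and this is exactly the part your proposal asserts rather than proves. Concretely: (i) on compact $t$-intervals you invoke ``a standard compactness/regularity argument'' to bound $\|v_t\|_2$ uniformly over all positive solutions of a Sobolev-critical equation, but such a priori bounds are not standard for critical problems and require an actual blow-up analysis; (ii) in the regime $t\to0^+$ you yourself flag that for $N=3,4$ the Aubin--Talenti profile is not in $L^2(\bbr^N)$ and that a Brezis--Nirenberg-type expansion of the concentration rate ``is needed,'' but you do not carry it out --- one must actually establish $\|v_t\|_q^q\lesssim t^{-1-\frac{2}{q-q\gamma_q}}$, which does not follow from the identities alone and is the genuinely hard estimate; and (iii) the uniformity over non-ground-state branches is handled by a contradiction scheme whose decisive case is again the unproven item (ii). So the skeleton agrees with the route the paper attributes to Wei--Wu, but the analytic core of the theorem is left open.
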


\begin{remark}
It is worth pointing out that besides the questions~$(Q_1)$ and $(Q_2)$, there are several other open questions proposed in \cite{Soave2020-1,Soave2020-2} and the recent developments in these open questions can be found in \cite{JeanjeanZhangZhong2024,LiZou2024, QiZou2023} and the references therein.
\end{remark}

\vskip0.12in

Since the functional $\Psi_{\mu}(u)$ has a concave-convex structure on the manifold $\mathcal{S}_a$, by the well-known results of the elliptic equations with concave-convex nonlinearities in \cite{AmbrosettiBrezisCerami1994} and Theorems~\ref{theo2} and \ref{theo4}, it is natural to {\it conjecture} that if $N\ge 3$, $2<q<2+\frac{4}{N}$ and $a>0$ then there exists a unique $\hat{\mu}_{a,*}>0$ such that the equation~\eqref{eqd106} has two positive solutions for $0<\mu<\hat{\mu}_{a,*}$ with the first one being a ground-state solution and the another one being a mountain-pass solution, has one positive solution for $\mu=\hat{\mu}_{a,*}$ and has no positive solutions for $\mu>\hat{\mu}_{a,*}$.  In this paper, we shall provide some evidences to support the above conjecture by improving Theorem~\ref{theo2}.

\vskip0.12in

To state our main results in this paper, we introduce
\begin{equation*}
\mu^*_a=\frac{(2^*-2)(2-q\gamma_q)^{\frac{2-q\gamma_q}{2^*-2}}}{\gamma_q(2^*-q\gamma_q)^{\frac{2^*-q\gamma_q}{2^*-2}}}\inf_{u\in \mathcal{S}_a}\frac{\left(\|\nabla u\|_2^2\right)^\frac{2^*-q\gamma_q}{2^*-2}}{\|u\|_q^q\left(\|u\|_{2^*}^{2^*}\right)^{\frac{2-q\gamma_q}{2^*-2}}}.
\end{equation*}
We call $\mu^*_a$ {\it the extremal value} of the $L^2$-Pohozaev manifold $\mathcal{P}_{a, \mu}$ since 
\begin{eqnarray*}
\mu^*_a=\max\left\{\tau>0\mid \mathcal{P}_{a, \mu}^0=\emptyset\text{ for }0<\mu<\tau\right\},
\end{eqnarray*}
see Proposition~\ref{prop001} for the details.
Now, the our main results can be stated as follows.
\begin{theorem}\label{thm001}
Let $N\ge 3$, $2<q<2+\frac{4}{N}$ and $a>0$.  If $0<\mu\leq\mu^{*}_{a}$ then the equation~\eqref{eqd106} has two solutions $u_{a,\mu,\pm}$ for suitable Lagrange multipliers $\lambda_{a, \mu, \pm}<0$, which are real valued, positive, radially symmetric and radially decreasing.  Moreover, $u_{a,\mu,+}$ is a ground-state solution and $u_{a,\mu,-}$ is a mountain-pass solution.
\end{theorem}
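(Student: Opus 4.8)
The plan is to realize the two solutions as constrained minimizers of $\Psi_\mu$ over the two components $\mathcal{P}^+_{a,\mu}$ and $\mathcal{P}^-_{a,\mu}$ of the $L^2$-Pohozaev manifold, exploiting the concave--convex geometry of the fibering maps $\Phi_{\mu,u}$. First I would analyze, for fixed $u\in\mathcal{S}_a$, the scalar function $\Phi_{\mu,u}(s)$. Writing $A=\|\nabla u\|_2^2$, $B=\|u\|_q^q$, $C=\|u\|_{2^*}^{2^*}$ and dividing $\Phi_{\mu,u}'(s)=0$ by $e^{q\gamma_q s}$, the equation becomes $A e^{(2-q\gamma_q)s}-Ce^{(2^*-q\gamma_q)s}=\mu\gamma_q B$; since $0<2-q\gamma_q<2^*-q\gamma_q$, the left-hand side increases from $0$ to a single positive maximum and then decreases to $-\infty$. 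Comparing that maximum with $\mu\gamma_q B$ and optimizing over $\mathcal{S}_a$ is precisely what produces the extremal value $\mu^*_a$; hence, by Proposition~\ref{prop001}, for $0<\mu<\mu^*_a$ one has $\mathcal{P}^0_{a,\mu}=\emptyset$ and the equation has exactly two roots $s^+(u)<s^-(u)$, giving a strict local minimum $u^+:=e^{Ns^+(u)/2}u(e^{s^+(u)}\cdot)\in\mathcal{P}^+_{a,\mu}$ and a strict local maximum $u^-\in\mathcal{P}^-_{a,\mu}$ of the fiber. I would then check that $u\mapsto s^\pm(u)$ are of class $C^1$ (implicit function theorem, using the nondegeneracy guaranteed by $\mathcal{P}^0_{a,\mu}=\emptyset$), so that the levels $m^\pm(a,\mu)$ are well defined.

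Next I would show that $m^+(a,\mu)=\inf_{\mathcal{P}^+_{a,\mu}}\Psi_\mu<0$ and is attained. Negativity follows because each fiber $\Phi_{\mu,u}$ tends to $0^-$ as $s\to-\infty$ while decreasing, so the local minimum value $\Phi_{\mu,u}(s^+(u))$ is strictly negative. For attainment, a symmetrization argument based on the P\'olya--Szeg\H{o} inequality lets me restrict to radial, nonnegative, nonincreasing functions (replacing a minimizing sequence by the $\mathcal{P}^+_{a,\mu}$-representatives of its Schwarz symmetrizations, which does not raise $\Psi_\mu$, since symmetrization preserves $\|u\|_q^q$ and $\|u\|_{2^*}^{2^*}$ and does not increase $\|\nabla u\|_2^2$). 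A minimizing sequence is then bounded in $H^1_{\mathrm{rad}}(\mathbb{R}^N)$, and the compact embedding $H^1_{\mathrm{rad}}\hookrightarrow L^q$ for $2<q<2^*$ together with $m^+<0$ excludes vanishing. The only loss of compactness can come from the critical term $\|u\|_{2^*}^{2^*}$, and here the strict negativity of the level keeps the weak limit on $\mathcal{S}_a$ and forces strong convergence, producing the minimizer $u_{a,\mu,+}$; since $m^+(a,\mu)=\inf_{\mathcal{P}_{a,\mu}}\Psi_\mu$, it is a ground state.

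The delicate step is $m^-(a,\mu)=\inf_{\mathcal{P}^-_{a,\mu}}\Psi_\mu$, the strictly positive mountain-pass level. Here the critical exponent genuinely obstructs compactness, and the standard remedy is the strict energy gap $m^-(a,\mu)<m^+(a,\mu)+\tfrac{1}{N}S^{N/2}$, below which every minimizing (Palais--Smale) sequence for the constrained problem is precompact modulo the $L^2$-dilation symmetry. I expect this estimate to be the main obstacle: it requires gluing a suitably rescaled and truncated Aubin--Talenti bubble to a dilation of $u_{a,\mu,+}$ and performing sharp expansions of $\|\nabla\cdot\|_2^2$, $\|\cdot\|_q^q$ and $\|\cdot\|_{2^*}^{2^*}$ in the concentration parameter, using $2<q<2+\tfrac{4}{N}$ to ensure the favorable subcritical correction dominates the error terms. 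Granting the gap, a minimizer $u_{a,\mu,-}\in\mathcal{P}^-_{a,\mu}$ exists; since $\mathcal{P}_{a,\mu}$ is a natural constraint (again by $\mathcal{P}^0_{a,\mu}=\emptyset$, which makes the two-multiplier Lagrange system nondegenerate), both $u_{a,\mu,\pm}$ solve \eqref{eqd106} for some $\lambda_{a,\mu,\pm}\in\mathbb{R}$.

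Finally I would fix the sign of the multipliers and the qualitative properties. Testing \eqref{eqd106} with the solution gives $\|\nabla u\|_2^2=\lambda a+\mu\|u\|_q^q+\|u\|_{2^*}^{2^*}$, while membership in $\mathcal{P}_{a,\mu}$ gives $\|\nabla u\|_2^2=\mu\gamma_q\|u\|_q^q+\|u\|_{2^*}^{2^*}$; subtracting yields $\lambda a=\mu(\gamma_q-1)\|u\|_q^q<0$, since $q<2+\tfrac{4}{N}$ forces $\gamma_q<1$. Because $\lambda<0$, applying the strong maximum principle to $-\Delta u+|\lambda|u=\mu u^{q-1}+u^{2^*-1}\ge0$ (after replacing $u$ by $|u|$, which does not increase $\Psi_\mu$) gives positivity, while the symmetrization above makes the minimizers radial and radially decreasing. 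The remaining point is the endpoint $\mu=\mu^*_a$, where $\mathcal{P}^0_{a,\mu^*_a}$ need no longer be empty (it consists exactly of the dilates of the extremizers defining $\mu^*_a$); I would treat it by a limiting argument, taking $\mu_n\uparrow\mu^*_a$, passing to the limit in the uniform bounds on $m^\pm(a,\mu_n)$ and on the associated minimizers, and showing that the limits are genuine ground-state and mountain-pass solutions at $\mu=\mu^*_a$ lying off $\mathcal{P}^0_{a,\mu^*_a}$. This continuity at the extremal value is precisely what upgrades Theorem~\ref{theo2} to the full range $0<\mu\le\mu^*_a$.
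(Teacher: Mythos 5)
Your overall architecture (fibering-map classification, the extremal value $\mu^*_a$ as the threshold for $\mathcal{P}^0_{a,\mu}=\emptyset$, negativity of $\lambda$ from the Pohozaev identity, symmetrization, and a limiting argument $\mu_n\uparrow\mu^*_a$ for the endpoint) matches the paper, and your treatment of $m^+(a,\mu)$ is in the right spirit. However, there are two genuine gaps at exactly the points the paper identifies as the main difficulties. First, for $m^-(a,\mu)$ you write ``granting the gap, a minimizer $u_{a,\mu,-}\in\mathcal{P}^-_{a,\mu}$ exists,'' but the energy estimate $m^-<m^++\frac{1}{N}S^{N/2}$ alone does not yield precompactness of minimizing sequences here. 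The standard compactness arguments (Jeanjean--Le, Wei--Wu) rely on $\mathcal{P}^-_{a,\mu}$ consisting of \emph{global} maximum points of the fibering maps and on $m^-(a,\mu)>0$; for $\mu$ close to $\mu^*_a$ the points of $\mathcal{P}^-_{a,\mu}$ are only \emph{local} maxima (and $m^-$ need not be positive), so this machinery breaks down. The paper circumvents this by first solving the Sobolev-subcritical problem on $\mathcal{P}^-_{a,\mu,p}$ for $p<2^*$ (Proposition~\ref{003}), proving $\mu^*_{a,p}\to\mu^*_a$ and $\limsup_p m^-_p(a,\mu)\le m^-(a,\mu)$ (Proposition~\ref{propmup}$(d)$, Lemma~\ref{lem002}$(b)$), and then showing the subcritical minimizers form a compact minimizing sequence as $p\uparrow2^*$ (Proposition~\ref{propWu0002}); your proposal contains no substitute for this step.

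Second, at the endpoint $\mu=\mu^*_a$ you assert that the limits of the minimizers lie ``off $\mathcal{P}^0_{a,\mu^*_a}$'' but give no mechanism, and this is the crux of the endpoint case: the limit a priori lands in $\mathcal{P}^\pm_{a,\mu^*_a}\cup\mathcal{P}^0_{a,\mu^*_a}$, and one must exclude $\mathcal{P}^0_{a,\mu^*_a}$ (which, contrary to your parenthetical, is nonempty only if the infimum defining $\mu^*_a$ is attained --- something the paper can verify only for $N\ge7$). The paper handles this by a second-order perturbation around the degenerate submanifold: if the limit $u$ belonged to $\mathcal{P}^0_{a,\mu^*_a}$, it would minimize the quotient defining $\mu^*_a$ and hence satisfy the \emph{second} Euler--Lagrange equation \eqref{eqnWu0014} in addition to \eqref{eqd106}, forcing $(2^*-2)|u|^{2^*-2}-\mu^*_a(2-q\gamma_q)|u|^{q-2}$ to be constant, which is incompatible with $u\in H^1(\mathbb{R}^N)$; the expansion of $\Psi_{\mu^*_a}((u_\epsilon)_{\tau^+_{\mu^*_a}(u_\epsilon)})$ along tangential perturbations then produces the contradiction (Step~2 of Proposition~\ref{propWu0003}). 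You would also need the monotonicity and continuity statements $\lim_{\mu\uparrow\mu^*_a}m^\mp(a,\mu)=\hat{\Psi}^\mp_{\mu^*_a}$ (Lemma~\ref{lemWu002}, Proposition~\ref{prop007}) to make the limiting argument close; none of this is present in your sketch.
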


\vskip0.12in

Some remarks about Theorem~\ref{thm001} are in ordered.  As we stated above, Theorem~\ref{thm001} improves Theorem~\ref{theo2}.  Let us denote
\begin{align*}
\mu(u)=\frac{(2^*-2)(2-q\gamma_q)^{\frac{2-q\gamma_q}{2^*-2}}}{\gamma_q(2^*-q\gamma_q)^{\frac{2^*-q\gamma_q}{2^*-2}}}\frac{\left(\|\nabla u\|_2^2\right)^\frac{2^*-q\gamma_q}{2^*-2}}{\|u\|_q^q\left(\|u\|_{2^*}^{2^*}\right)^{\frac{2-q\gamma_q}{2^*-2}}}
\end{align*}
for the sake of simplicity.
Then by the Gagliardo-Nirenberg inequality and the Sobolev inequality, we have
\begin{align*}
\mu(u)\geq\frac{(2^*-2)(2-q\gamma_q)^{\frac{2-q\gamma_q}{2^*-2}}}{\gamma_q(2^*-q\gamma_q)^{\frac{2^*-q\gamma_q}{2^*-2}}}\frac{S^{\frac{2^*(2-q\gamma_q)}{2(2^*-2)}}}{C_{N,q}^qa^{\frac{q-q\gamma_q}{2}}},
\end{align*}
which implies that $\mu(u)a^\frac{q(1-\gamma_q)}{2}\geq C_1\frac{2}{q\gamma_q}\left(\frac{2^*}{2}\right)^{\frac{2-q\gamma_q}{2^*-2}}$, where $C_1$ is given in Theorem~\ref{theo1}.  Since $\frac{2}{q\gamma_q}\left(\frac{2^*}{2}\right)^{\frac{2-q\gamma_q}{2^*-2}}>1$ by $2<q<2+\frac{4}{N}$, we have
\begin{equation*}
\mu^*_aa^\frac{q(1-\gamma_q)}{2}=\inf_{u\in S_a}\mu(u)a^\frac{q(1-\gamma_q)}{2}\geq C_1\frac{2}{q\gamma_q}\left(\frac{2^*}{2}\right)^{\frac{2-q\gamma_q}{2^*-2}}> C_1\ge \alpha_{N,q}.
\end{equation*}

\vskip0.12in

We prove Theorem~\ref{thm001} for $\mu\in(0, \mu^*_{a})$ by studying the variational problems~\eqref{eqnnWu0099}.  The achievement of $m^+(a,\mu)$ can be proved by the usual arguments, see, for example, \cite{JeanjeanLe2021,JeanjeanJendrejLeVisciglia2022,Soave2020-2,WeiWu2022}, since the submanifold $\mathcal{P}_{a,\mu}^+$ consists of local minimum points of the fibering map $\Phi_{\mu,u}(s)$ for all $\mu\in(0, \mu^*_{a})$.  However, the achievement of $m^-(a,\mu)$ can not be dealt with these usual arguments, since on one hand, the key point in these arguments is to use the fact that the submanifold $\mathcal{P}_{a,\mu}^-$ consists of global maximum points of the fibering map $\Phi_{\mu,u}(s)$ to prove the minimizing sequence of $m^-(a,\mu)$ is compact and on the other hand, if $\mu$ close to $\mu^*_{a}$ then $\mathcal{P}_{a,\mu}^-$ only consists of {\it local} maximum points of the fibering map $\Phi_{\mu,u}(s)$.  Thus, to prove the achievement of $m^-(a,\mu)$, we first construct a mountain-pass solution of \eqref{eqnWu0008} in the Sobolev subcritical case $p<2^*$ by proving the following proposition.
\begin{proposition}\label{003}
Let $N\ge 3$, $2<q<2+\frac{4}{N}<p<2^*$, $a>0$ and $0<\mu<\mu^*_{a,p}$, where
\begin{equation}\label{eqnWu0025}
\mu_{a,p}^*=\inf_{u\in \mathcal{S}_a}\frac{(p\gamma_p-2)(2-q\gamma_q)^{\frac{2-q\gamma_q}{p\gamma_p-2}}\left(\|\nabla u\|_2^2\right)^\frac{p\gamma_p-q\gamma_q}{p\gamma_p-2}}{\gamma_q(p\gamma_p-q\gamma_q)^{\frac{p\gamma_p-q\gamma_q}{p\gamma_p-2}}\gamma_p^{\frac{2-q\gamma_q}{p\gamma_p-2}}\|u\|_q^q\left(\|u\|_{p}^{p}\right)^{\frac{2-q\gamma_q}{p\gamma_p-2}}}.
\end{equation}
Then the variational problem
\begin{equation}\label{eqnWu0009}
m^-_p(a,\mu)=\inf_{u\in \mathcal{P}_{a,\mu,p}^-}\Psi_{\mu,p}(u)
\end{equation}
is achieved by some $u_{a,\mu,p,-}$, which is real valued, positive, radially symmetric and radially decreasing, where
\begin{equation*}
\Psi_{\mu,p}(u)=\frac{1}{2}\|\nabla u\|_2^2-\frac{\mu}{q}\|u\|_q^q-\frac{1}{p}\|u\|_{p}^{p}.
\end{equation*}
Moreover, $u_{a,\mu,p,-}$ also satisfies the Schr\"{o}dinger equation~\eqref{eqnWu0008}
for a suitable Lagrange multiplier $\lambda_{a, \mu,p,-}<0$.
\end{proposition}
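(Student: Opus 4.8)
\emph{Proof strategy.} The plan is to realize $m^-_p(a,\mu)$ as a fibered mountain-pass level and to recover the needed compactness from the subcritical embedding together with the sign of the Lagrange multiplier.

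First I would carry out the fibering analysis. Writing $s\star u=e^{Ns/2}u(e^{s}\cdot)$ so that $\Psi_{\mu,p}(s\star u)=\Phi_{\mu,u}(s)$, I note that $q\gamma_q<2<p\gamma_p$ (from $q<2+\frac{4}{N}<p$), whence each fiber has the concave--convex profile $\Phi_{\mu,u}(s)\to 0^{-}$ as $s\to-\infty$ and $\Phi_{\mu,u}(s)\to-\infty$ as $s\to+\infty$. A direct computation, entirely analogous to Proposition~\ref{prop001}, shows that the definition \eqref{eqnWu0025} of $\mu^*_{a,p}$ is precisely the threshold below which $\mathcal{P}^0_{a,\mu,p}=\emptyset$; consequently, for $0<\mu<\mu^*_{a,p}$ every $\Phi_{\mu,u}$ has exactly two nondegenerate critical points, a strict local minimum at $s^+(u)$ and a strict local maximum at $s^-(u)$, depending continuously on $u$, with $s^{\pm}(u)\star u\in\mathcal{P}^{\pm}_{a,\mu,p}$. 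Using the Pohozaev relation on $\mathcal{P}_{a,\mu,p}$ I would record the identity $\Psi_{\mu,p}(u)=(\frac{1}{2}-\frac{1}{p\gamma_p})\|\nabla u\|_2^2-\mu(\frac{1}{q}-\frac{\gamma_q}{p\gamma_p})\|u\|_q^q$, which combined with the Gagliardo--Nirenberg inequality (recall $q\gamma_q<2$) shows that $\Psi_{\mu,p}$ is coercive on $\mathcal{P}_{a,\mu,p}$, so minimizing sequences for $m^-_p(a,\mu)$ are bounded in $H^1(\mathbb{R}^{N})$. On $\mathcal{P}^-_{a,\mu,p}$ the extra constraint yields uniform, $\mu$-independent lower bounds $\|\nabla u\|_2^2\ge A_0>0$ and $\|u\|_p^p\ge c_0>0$, which will exclude vanishing.

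Next I would reduce to the radial class and produce a good Palais--Smale sequence. Since symmetric decreasing rearrangement preserves $\|u\|_q,\|u\|_p$ and does not increase $\|\nabla u\|_2$, and since the local maximum value of the fiber is strictly increasing in the coefficient $\|\nabla u\|_2^2$ (an envelope-theorem computation), rearranging never increases $\Phi_{\mu,u}(s^-(u))$; hence $m^-_p(a,\mu)=\inf_{u\in\mathcal{S}_a\cap H^1_{rad}}\Phi_{\mu,u}(s^-(u))$ and it suffices to minimize over radial functions. Working with the augmented functional $(u,s)\mapsto\Psi_{\mu,p}(s\star u)$ on $(\mathcal{S}_a\cap H^1_{rad})\times\mathbb{R}$ as in Jeanjean \cite{Jeanjean1997} and Bartsch--Soave \cite{BartschSoave2017}, the fibered mountain-pass geometry (with the local maximum $s^-$ as barrier) produces a Palais--Smale sequence $\{u_n\}$ for $\Psi_{\mu,p}|_{\mathcal{S}_a}$ at the level $m^-_p(a,\mu)$ that additionally satisfies $P_p(u_n):=\|\nabla u_n\|_2^2-\mu\gamma_q\|u_n\|_q^q-\gamma_p\|u_n\|_p^p\to 0$; by the principle of symmetric criticality the radial restriction is harmless.

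I would then pass to the limit. Boundedness gives $u_n\rightharpoonup u$ in $H^1(\mathbb{R}^{N})$, and the \emph{subcritical} compact embedding $H^1_{rad}(\mathbb{R}^{N})\hookrightarrow L^r(\mathbb{R}^{N})$ for $2<r<2^*$ forces $u_n\to u$ strongly in $L^q$ and $L^p$; the bound $\|u_n\|_p\ge c_0>0$ then gives $u\neq0$. Writing the multiplier relation and combining it with $P_p(u_n)\to0$ yields $\lambda_n a\to-\mu(1-\gamma_q)\|u\|_q^q-(1-\gamma_p)\|u\|_p^p$, and since $q,p<2^*$ both $1-\gamma_q>0$ and $1-\gamma_p>0$ while $\|u\|_q,\|u\|_p>0$, so $\lambda_n\to\lambda<0$. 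Testing the approximate equation against $u_n-u$ and using the strong $L^q,L^p$ convergence, the nonlinear terms vanish and $\|\nabla(u_n-u)\|_2^2-\lambda\|u_n-u\|_2^2\to0$; as $-\lambda>0$, \emph{both} terms tend to $0$, giving $u_n\to u$ strongly in $H^1(\mathbb{R}^{N})$. Thus $u\in\mathcal{S}_a$, no mass escapes, $u$ solves \eqref{eqnWu0008} with $\lambda<0$, $P_p(u)=0$, and $\Psi_{\mu,p}(u)=m^-_p(a,\mu)$; because $\mathcal{P}^0_{a,\mu,p}=\emptyset$ the limit lies in $\mathcal{P}^-_{a,\mu,p}$ and achieves $m^-_p(a,\mu)$. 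Taking $u$ radial by construction and replacing it by $|u|$, one has $u\ge0$ radially symmetric and decreasing, and $-\Delta u+(-\lambda)u=\mu u^{q-1}+u^{p-1}\ge0$ with $-\lambda>0$ and $u\not\equiv0$ gives $u>0$ by the strong maximum principle.

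The main obstacle I expect is the compactness step, and within it the construction of the Palais--Smale sequence carrying the asymptotic Pohozaev information in the radial class: the fibered mountain-pass geometry is delicate precisely because, for $\mu$ close to $\mu^*_{a,p}$, the manifold $\mathcal{P}^-_{a,\mu,p}$ consists only of \emph{local} (not global) maxima of the fibers, so $m^-_p(a,\mu)$ need not be positive. The sign $\lambda<0$, although immediate from the Pohozaev identity and $q,p<2^*$, is exactly what upgrades the merely subcritical compactness to strong $H^1$ convergence and prevents loss of mass; verifying that the limit is genuinely nondegenerate---lying in $\mathcal{P}^-_{a,\mu,p}$ rather than drifting to $\mathcal{P}^0_{a,\mu,p}$---is the remaining point requiring care.
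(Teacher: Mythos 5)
Your fibering analysis, the coercivity and boundedness arguments, the non-vanishing bound $\|\nabla u_n\|_2^2\gtrsim 1$ on $\mathcal{P}^-_{a,\mu,p}$, the sign of the Lagrange multiplier, and the use of subcritical radial compactness are all sound and consistent with the paper. The genuine gap is the step you yourself flag as ``the main obstacle'': the production of a Palais--Smale sequence at the level $m^-_p(a,\mu)$ with $P_p(u_n)\to0$ via ``the fibered mountain-pass geometry with the local maximum $s^-$ as barrier''. That is the argument of \cite[Theorem~1.3]{Soave2020-1}, and it requires that $t^{-}_{\mu,p}(u)$ be the \emph{global} maximum of the fiber (uniformly over near-optimal $u$), so that the min--max level over admissible paths strictly dominates the values at the endpoints and coincides with $m^-_p(a,\mu)$. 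This fails for $\mu$ close to $\mu^*_{a,p}$: since $\Phi_{\mu,p,u}(s)\to 0^{-}$ as $s\to0^{+}$ and $\Phi_{\mu,p,u}$ is strictly decreasing on $\left(0,t^{+}_{\mu,p}(u)\right]$, one has $\Phi_{\mu,p,u}\left(t^{+}_{\mu,p}(u)\right)<0$, and as $\mu\uparrow\mu_p(u)$ both $\Phi_{\mu,p,u}\left(t^{\pm}_{\mu,p}(u)\right)$ converge to $\Phi_{\mu_p(u),p,u}(s_p(u))<0$. Taking $u=u_0$ a minimizer of $\mu_p(\cdot)$ (which exists by Proposition~\ref{propmup}$(c)$) shows $m^-_p(a,\mu)<0$ for $\mu$ near $\mu^*_{a,p}$; then the supremum of $\Phi_{\mu,p,u_0}$ along a fiber path is approached near $s=0$ rather than attained at $t^{-}_{\mu,p}(u_0)$, and the mountain-pass level no longer equals $m^-_p(a,\mu)$. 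In short, your construction only covers the smallness regime of \cite{Soave2020-1}, whereas the whole point of Proposition~\ref{003} (see Remark~\ref{rmkWu0002}) is to reach every $\mu<\mu^*_{a,p}$.

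The paper sidesteps this entirely: it never constructs a Palais--Smale sequence. It takes a minimizing sequence directly in $\mathcal{P}^-_{a,\mu,p}$, Schwartz-symmetrizes it, obtains boundedness and strong $L^r$ convergence for $2<r<2^*$ from Strauss's radial lemma, excludes vanishing by the constraint-driven bound $\|\nabla u_n\|_2^2\gtrsim 1$, and excludes loss of mass by projecting the weak limit $u_0$ back onto $\mathcal{P}^-_{a_1,\mu,p}$ via $t^{-}_{\mu,p}(u_0)$ and invoking the strict monotonicity of $m^-_p(\cdot,\mu)$ in the mass from Lemma~\ref{lem001}$(b)$; the equation and $\lambda_{a,\mu,p,-}<0$ then follow from the Lagrange multiplier rule because $\mathcal{P}^0_{a,\mu,p}=\emptyset$ makes $\mathcal{P}^-_{a,\mu,p}$ a natural constraint. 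If you wish to retain a Palais--Smale-type scheme, you should replace the mountain-pass construction by Ekeland's variational principle applied on $\mathcal{P}^-_{a,\mu,p}$ itself, which is complete precisely because $\mathcal{P}^0_{a,\mu,p}=\emptyset$ for $\mu<\mu^*_{a,p}$. As written, the key compactness input of your proof is not established.
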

We next prove the achievement of $m^-(a,\mu)$ by showing that $\mu_{a,p}^*\to\mu_{a}^*$ and $\{u_{a,\mu,p,-}\}$ is a compact minimizing sequence of $m^-(a,\mu)$ as $p\uparrow 2^*$ up to a subsequence.  Our idea is inspired by \cite{QiZou2023} which is also used in \cite{DengWu2023}.  Moreover, Proposition~\ref{003} improves \cite[Theorem~1.3]{Soave2020-1}, see Remark~\ref{rmkWu0002} for the details.

\vskip0.12in

The crucial point in the proof of Theorem~\ref{thm001} for $\mu=\mu_a^*$ is to establish the following energy estimates
\begin{equation*}
m^\pm(a,\mu_a^*)=\inf_{u\in \mathcal{P}_{a, \mu_a^*}^\pm}\Psi_{\mu_a^*}(u)<m^0(a,\mu_a^*)=\inf_{u\in \mathcal{P}_{a, \mu_a^*}^0}\Psi_{\mu_a^*}(u).
\end{equation*}
In \cite{AlbuquerqueSilva2020}, the above energy estimates is obtained by proving the achievement of variational problems which is related to the definitions of extremal values.  In our problem, the involved variational problem is the following
\begin{equation*}
\inf_{u\in \mathcal{S}_a}\frac{\left(\|\nabla u\|_2^2\right)^\frac{2^*-q\gamma_q}{2^*-2}}{\|u\|_q^q\left(\|u\|_{2^*}^{2^*}\right)^{\frac{2-q\gamma_q}{2^*-2}}}.
\end{equation*}
However, this variational problem is quite different from the related one considered in \cite{AlbuquerqueSilva2020} since we loss the compactness of the embeddings from $H^1(\bbr^N)$ into $L^2(\bbr^N)$ and $L^{2^*}(\bbr^N)$ in solving it.  Thus, it is technically quite nontrivial and involved to prove the achievement of this variational problem.  By constructing a suitable test function and using the properties of the digamma function, we are only able to prove that this variational problem is achieved for $N\geq7$.  Thus, to establish the existence results of the Schr\"{o}dinger equation~\eqref{eqd106} at the extremal value $\mu^*_a$ for all $N\geq3$, we introduce a perturbation argument around the degenerate submanifold $\mathcal{P}^0_{a,\mu^*_a}$ to replace the discussions on the achievement of this variational problem.

\vskip0.12in

To provide more evidences in this direction to support the conjecture stated above, it is natural to define
\begin{equation*}
\mu^*_{a,\pm}=\sup\{\mu>0\mid m^\pm(a,\mu)=m^0(a,\mu)\}.
\end{equation*}
Clearly, we have $\mu^*_{a}<\mu^*_{a,\pm}$.
Moreover, by our arguments, one can also prove that $\mu^*_{a,+}\leq\mu^*_{a,-}$ and the equation~\eqref{eqd106} has two positive solutions with one being a ground-state solution and another one being a mountain-pass solution for $0<\mu<\mu^*_{a,+}$.  On the other hand, if $v_t$ is nondegenerate for all $t>0$ or at least only degenerate at an isolated sequence $\{t_n\}$ (By perturbation arguments, it is not hard to prove that $v_t$ is nondegenerate for $t>0$ sufficiently small and sufficiently large), where $v_t$ is the positive solution of \eqref{eqnnWu1020}, then by the continuity method, it is not difficult to find out that if $N\ge 3$, $2<q<2+\frac{4}{N}$ and $a>0$ then there exists $\hat{\mu}_{a,**}>0$ such that the equation~\eqref{eqnWu1020} has two solutions for $0<\mu<\hat{\mu}_{a,**}$, has one solution for $\mu=\hat{\mu}_{a,**}$ and has no solutions for $\mu>\hat{\mu}_{a,**}$.  Combining the above discusions, it is natural to propose the following question:

\vskip0.12in

{\bf Open question:}\quad Let $N\ge 3$, $2<q<2+\frac{4}{N}$ and $a>0$.  Does there hold $\mu^*_{a,+}=\mu^*_{a,-}=\hat{\mu}_{a,**}$?

\vskip0.12in

Clearly, if this question has a positive answer, then the structure of positive solutions of the equation~\eqref{eqd106} will be very similar to that of elliptic equations with concave-convex nonlinearities.

\vskip0.12in

{\bf Notations.}\quad Throughout this paper, $a\sim b$ means that $C'b\leq a\leq Cb$ and $a\lesssim b$ means that $a\leq Cb$ where $C$ and $C'$ are positive constants.  We also denote $\gamma_q=\frac{N(q-2)}{2q}$ for all $2\leq q\leq 2^*$, $(u)_s=s^{\frac{N}{2}}u(sx)$ and
\begin{eqnarray*}
\left\langle\psi,\varphi\right\rangle_{L^2}=\int_{\bbr^N}\psi\varphi dx.
\end{eqnarray*}

\section{Extremal values}
Let us consider the following functional
\begin{equation*}
\Psi_{\mu,p}(u)=\frac{1}{2}\|\nabla u\|_2^2-\frac{\mu}{q}\|u\|_q^q-\frac{1}{p}\|u\|_{p}^{p}
\end{equation*}
where $2<q<2+\frac{4}{N}<p\leq 2^*$.  The related fibering maps are given by
\begin{equation*}
\Phi_{\mu,p,u}(s)=\Psi_{\mu,p}((u)_s)=\frac{s^2}{2}\|\nabla u\|_2^2-\frac{\mu s^{q\gamma_q}}{q}\|u\|_q^q-\frac{s^{p\gamma_p}}{p}\|u\|_{p}^{p}.
\end{equation*}
The $L^2$-Pohozaev manifold of the functional $\Psi_{\mu,p}(u)$, which is given by
\begin{equation*}
\mathcal{P}_{a,\mu,p}=\left\{u\in H^1(\mathbb{R}^N)\cap \mathcal{S}_a~|~\|\nabla u\|_2^2=\gamma_p\|u\|_{p}^{p}+\mu\gamma_q\|u\|_q^q\right\},
\end{equation*}
can be naturally divided into the following three parts by the types of critical points of the fibering maps:
\begin{equation*}
\mathcal{P}_{a,\mu,p}^-=\left\{u\in \mathcal{P}_{a,\mu,p}~|~2\|\nabla u\|_2^2-\mu q\gamma_q^2\|u\|_q^q-p\gamma_p^2\|u\|_{p}^{p}<0\right\},
\end{equation*}
\begin{equation*}
\mathcal{P}_{a,\mu,p}^0=\left\{u\in \mathcal{P}_{a,\mu,p}~|~2\|\nabla u\|_2^2-\mu q\gamma_q^2\|u\|_q^q-p\gamma_p^2\|u\|_{p}^{p}=0\right\},
\end{equation*}
\begin{equation*}
\mathcal{P}_{a,\mu,p}^+=\left\{u\in \mathcal{P}_{a,\mu,p}~|~2\|\nabla u\|_2^2-\mu q\gamma_q^2\|u\|_q^q-p\gamma_p^2\|u\|_{p}^{p}>0\right\}.
\end{equation*}
For any $u\in\mathcal{S}_a$, we define
\begin{equation*}\label{mup}
\left\{\aligned& s_p(u)=\left(\frac{(2-q\gamma_q)\|\nabla u\|_2^2}{(p\gamma_p-q\gamma_q)\gamma_p\|u\|_{p}^{p}}\right)^{\frac{1}{p\gamma_p-2}},\\
&\mu_p(u)=\frac{(p\gamma_p-2)(2-q\gamma_q)^{\frac{2-q\gamma_q}{p\gamma_p-2}}\left(\|\nabla u\|_2^2\right)^\frac{p\gamma_p-q\gamma_q}{p\gamma_p-2}}{\gamma_q(p\gamma_p-q\gamma_q)^{\frac{p\gamma_p-q\gamma_q}{p\gamma_p-2}}\gamma_p^{\frac{2-q\gamma_q}{p\gamma_p-2}}\|u\|_q^q\left(\|u\|_{p}^{p}\right)^{\frac{2-q\gamma_q}{p\gamma_p-2}}}.\endaligned\right.
\end{equation*}
\begin{proposition}\label{prop001}
Let $N\ge3$, $\mu>0$ and $2<q<2+\frac{4}{N}<p\leq2^*$.  Then for any $u\in \mathcal{S}_a$, $\Phi_{\mu,p,u}(s)$ is $C^\infty$ in $(0,\infty)$. Moreover, the set $\left\{\Phi_{\mu,p,u}(s)\mid u\in \mathcal{S}_a\right\}$
can be classified as follows:
 \begin{itemize}
    \item [$(a)$]\quad If $\mu\in\left(0,\mu_p(u)\right)$ then $\Phi_{\mu,p,u}(s)$ only has two critical points $0<t_{\mu,p}^{+}(u)<s_p(u)<t_{\mu,p}^{-}(u)<+\infty$, where $t_{\mu,p}^{+}(u)$ is the strict local minimum point of $\Phi_{\mu,p,u}(s)$ in $\left(0, t_{\mu,p}^{-}(u)\right)$ and $t_{\mu,p}^{-}(u)$ is the strict local maximum point of $\Phi_{\mu,p,u}(s)$ in $\left(t_{\mu,p}^{+}(u), +\infty\right)$.  Moreover, $(u)_{t_{\mu,p}^{\pm}(u)}\in\mathcal{P}_{a,\mu,p}^\pm$.
    \item [$(b)$]\quad If $\mu=\mu_p(u)$ then $\Phi_{\mu,p,u}(s)$ only has one degenerate critical point $s_p(u):=t_{\mu,p}^{0}(u)$.  Moreover, $(u)_{t_{\mu,p}^{0}(u)}\in\mathcal{P}_{a,\mu,p}^0$.
    \item [$(c)$]\quad If $\mu>\mu_p(u)$ then $\Phi_{\mu,p,u}(s)$ has no critical points.
 \end{itemize}
\end{proposition}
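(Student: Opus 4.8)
The plan is to reduce the entire classification to the elementary single-variable analysis of $\Phi_{\mu,p,u}(s)$, exploiting the ordering of the three exponents. First I would record that for $2<q<2+\frac{4}{N}<p\le 2^*$ one has $0<q\gamma_q<2<p\gamma_p$, which follows at once from $r\gamma_r=\frac{N(r-2)}{2}$. Since $\Phi_{\mu,p,u}(s)$ is a finite combination of the powers $s^2$, $s^{q\gamma_q}$, $s^{p\gamma_p}$ with positive exponents, it is manifestly $C^\infty$ on $(0,\infty)$, and its behaviour at the endpoints is governed by the extreme exponents: the term $-\frac{\mu}{q}s^{q\gamma_q}\|u\|_q^q$ dominates as $s\to0^+$ (so $\Phi_{\mu,p,u}\to0^-$), while $-\frac1p s^{p\gamma_p}\|u\|_p^p$ dominates as $s\to+\infty$ (so $\Phi_{\mu,p,u}\to-\infty$).

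Next I would factor the derivative. Writing $A=\|\nabla u\|_2^2$ and $C=\|u\|_p^p$, a direct computation gives
\[
\Phi_{\mu,p,u}'(s)=s^{q\gamma_q-1}\bigl(h(s)-\mu\gamma_q\|u\|_q^q\bigr),\qquad h(s):=A\,s^{2-q\gamma_q}-C\gamma_p\,s^{p\gamma_p-q\gamma_q}.
\]
Because $s^{q\gamma_q-1}>0$ on $(0,\infty)$, the critical points of $\Phi_{\mu,p,u}$ are exactly the solutions of $h(s)=\mu\gamma_q\|u\|_q^q$. The function $h$ vanishes at $0$, tends to $-\infty$, and, since $0<2-q\gamma_q<p\gamma_p-q\gamma_q$, possesses a unique critical point which is a strict global maximum; solving $h'=0$ shows this maximizer is exactly the point $s_p(u)$ of the statement. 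Evaluating $h$ at $s_p(u)$ and dividing by $\gamma_q\|u\|_q^q$ reproduces, after a bookkeeping of the fractional exponents, precisely the threshold of the statement, i.e. $\mu_p(u)=h(s_p(u))/(\gamma_q\|u\|_q^q)$.

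The trichotomy then follows immediately from the graph of $h$: the level $\mu\gamma_q\|u\|_q^q$ meets the graph in two points $t^+_{\mu,p}(u)<s_p(u)<t^-_{\mu,p}(u)$ when $\mu<\mu_p(u)$, tangentially at the single point $s_p(u)$ when $\mu=\mu_p(u)$, and not at all when $\mu>\mu_p(u)$, giving cases $(a)$, $(b)$, $(c)$. To identify the nature of each critical point I would differentiate the factorized form: at a critical point $t$ one has $h(t)=\mu\gamma_q\|u\|_q^q$, whence $\Phi_{\mu,p,u}''(t)=t^{q\gamma_q-1}h'(t)$. Since $h$ is increasing on $(0,s_p(u))$ and decreasing on $(s_p(u),\infty)$, this yields $\Phi''>0$ at $t^+_{\mu,p}(u)$ (strict local minimum), $\Phi''<0$ at $t^-_{\mu,p}(u)$ (strict local maximum), and $\Phi''=0$ at $s_p(u)$ in the degenerate case.

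Finally, to place the scaled functions in the correct sub-manifolds I would use two facts. First, the scaling relation $\Phi_{\mu,p,(u)_t}(s)=\Phi_{\mu,p,u}(ts)$, a consequence of $((u)_t)_s=(u)_{ts}$, shows that $(u)_t\in\mathcal P_{a,\mu,p}$ iff $t$ is a critical point of $\Phi_{\mu,p,u}$, and that $\Phi_{\mu,p,(u)_t}''(1)=t^2\Phi_{\mu,p,u}''(t)$ has the same sign as $\Phi_{\mu,p,u}''(t)$. Second, for $v\in\mathcal P_{a,\mu,p}$ the defining quantity $2\|\nabla v\|_2^2-\mu q\gamma_q^2\|v\|_q^q-p\gamma_p^2\|v\|_p^p$ equals $\Phi_{\mu,p,v}''(1)$; this is verified by substituting the constraint $\|\nabla v\|_2^2=\gamma_p\|v\|_p^p+\mu\gamma_q\|v\|_q^q$ into both expressions and reducing each to the common form $\mu\gamma_q(2-q\gamma_q)\|v\|_q^q+\gamma_p(2-p\gamma_p)\|v\|_p^p$. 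Combining these with the signs of $\Phi_{\mu,p,u}''$ computed above gives $(u)_{t^+_{\mu,p}(u)}\in\mathcal P_{a,\mu,p}^+$ and $(u)_{t^-_{\mu,p}(u)}\in\mathcal P_{a,\mu,p}^-$ in case $(a)$, and $(u)_{s_p(u)}\in\mathcal P^0_{a,\mu,p}$ in case $(b)$. The only genuinely delicate point is purely computational, namely checking that $h(s_p(u))/(\gamma_q\|u\|_q^q)$ collapses to the exact closed form $\mu_p(u)$, which requires careful handling of the exponents $\frac{2-q\gamma_q}{p\gamma_p-2}$ and $\frac{p\gamma_p-q\gamma_q}{p\gamma_p-2}$; everything else is soft and follows from the monotonicity of $h$.
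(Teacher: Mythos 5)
Your proposal is correct and follows essentially the same route as the paper: both reduce the critical-point equation $\Phi_{\mu,p,u}'(s)=0$ to the level-set equation $h(s)=\mu\gamma_q\|u\|_q^q$ for the auxiliary function $h(s)=\|\nabla u\|_2^2 s^{2-q\gamma_q}-\gamma_p\|u\|_p^p s^{p\gamma_p-q\gamma_q}$, locate its unique strict maximum at $s_p(u)$, and check that $h(s_p(u))/(\gamma_q\|u\|_q^q)=\mu_p(u)$, from which the trichotomy follows. You supply several details the paper leaves implicit (the second-derivative test $\Phi''(t)=t^{q\gamma_q-1}h'(t)$ at critical points, and the identification of $2\|\nabla v\|_2^2-\mu q\gamma_q^2\|v\|_q^q-p\gamma_p^2\|v\|_p^p$ with $\Phi_{\mu,p,v}''(1)$ on $\mathcal{P}_{a,\mu,p}$ to justify the submanifold membership), and all of these check out.
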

\begin{proof}
For any $u\in \mathcal{S}_a$, we define $h_{\mu,u}(s)=s^{2-q\gamma_q}\|\nabla u\|_2^2-\gamma_ps^{p\gamma_p-q\gamma_q}\|u\|_{p}^{p}$.  Direct calculations show that
\begin{eqnarray*}
\max_{s>0}h_{\mu,u}(s)&=&h_{\mu,u}\left(\left(\frac{(2-q\gamma_q)\|\nabla u\|_2^2}{(p\gamma_p-q\gamma_q)\gamma_p\|u\|_p^p}\right)^{\frac{1}{p\gamma_p-2}}\right)\\
&=&\left(\frac{(2-q\gamma_q)\|\nabla u\|_2^2}{(p\gamma_p-q\gamma_q)\gamma_p\|u\|_p^p}\right)^{\frac{2-q\gamma_q}{p\gamma_p-2}}\frac{(p\gamma_p-2)\|\nabla u\|_2^2}{p\gamma_p-q\gamma_q}.
\end{eqnarray*}
Thus, the equation
\begin{equation*}
s^2\|\nabla u\|_2^2-\mu\gamma_qs^{q\gamma_q}\|u\|_q^q-\gamma_ps^{p\gamma_p}\|u\|_{p}^{p}=0
\end{equation*}
has solutions if and only if $\mu\leq\mu_{p}(u)$, where it has two solutions for $\mu<\mu_{p}(u)$ satisfying $0<t_{\mu,p}^{+}(u)<s_p(u)<t_{\mu,p}^{-}(u)<+\infty$ and has only one solution $s_p(u)$ for  $\mu=\mu_{p}(u)$.
\end{proof}

\vskip0.12in

For the sake of simplicity, we denote 
\begin{eqnarray*}
\mu_a^*:=\mu_{a,2^*}^*\quad\text{and}\quad\mu(u):=\mu_{2^*}(u).
\end{eqnarray*}
Thus, 
\begin{equation}\label{defi001}
\mu^*_a=\tilde{C}_{N,q}\inf_{u\in \mathcal{S}_a}\frac{\left(\|\nabla u\|_2^2\right)^\frac{2^*-q\gamma_q}{2^*-2}}{\|u\|_q^q\left(\|u\|_{2^*}^{2^*}\right)^{\frac{2-q\gamma_q}{2^*-2}}}
\end{equation}
where
\begin{equation*}
 \tilde{C}_{N,q}=\frac{(2^*-2)(2-q\gamma_q)^{\frac{2-q\gamma_q}{2^*-2}}}{\gamma_q(2^*-q\gamma_q)^{\frac{2^*-q\gamma_q}{2^*-2}}}.
\end{equation*}
\begin{proposition}\label{propmup}
Let $N\ge3$, $u\in \mathcal{S}_a$ and $2<q<2+\frac{4}{N}<p\leq2^*$, then
\begin{itemize}
    \item [$(a)$]\quad $0<\mu^*_{a,p}<\infty$ and the functional $\mu_p(u)$ is $0$-homogeneous for the scaling $(u)_s$, that is, $\mu_p((u)_s)=\mu_p(u)$ for all $s>0$.
    \item [$(b)$]\quad $\left(\mu_{a,p}^*\right)^\frac{p\gamma_p-2}{p\gamma_p-q\gamma_q}=\left(\mu_{1,p}^*\right)^\frac{p\gamma_p-2}{p\gamma_p-q\gamma_q}a^{-(2(1-\gamma_p)+\frac{(p-q)N}{p}\frac{p\gamma_p-2}{p\gamma_p-q\gamma_q})\frac{p}{N(p-2)}}$ is strictly decreasing in terms of $a>0$.
    \item [$(c)$]\quad If $p<2^*$ then $\mu_{a,p}^*$ is achieved by some $u_0\in \mathcal{S}_a$, moreover, $\mathcal{P}^0_{a,\mu,p}=\emptyset$ for all $\mu\in(0,\mu^*_{a,p})$ and $\mathcal{P}^0_{a,\mu,p}\not=\emptyset$ for all $\mu\in[\mu^*_{a,p},\infty)$.
    \item [$(d)$]\quad $\lim_{p\uparrow2^*}\mu^*_{a.p}=\mu^*_a$.
 \end{itemize}
\end{proposition}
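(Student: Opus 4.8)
The plan is to establish the four items in the order (a), (b), (c), (d), since the achievement statement in (c) and the limit in (d) both rely on the scaling laws in (a) and (b). For (a), I would first verify the $0$-homogeneity of $\mu_p$ by a direct scaling computation: under $(u)_s=s^{N/2}u(sx)$ one has $\|\nabla(u)_s\|_2^2=s^2\|\nabla u\|_2^2$, $\|(u)_s\|_q^q=s^{q\gamma_q}\|u\|_q^q$ and $\|(u)_s\|_p^p=s^{p\gamma_p}\|u\|_p^p$, and substituting these into $\mu_p$ the total power of $s$ equals $2\frac{p\gamma_p-q\gamma_q}{p\gamma_p-2}-q\gamma_q-p\gamma_p\frac{2-q\gamma_q}{p\gamma_p-2}$, which collapses to $0$. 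Finiteness of $\mu^*_{a,p}$ is immediate by evaluating $\mu_p$ at a fixed $u\in C_c^\infty\cap\mathcal{S}_a$. For positivity I would insert the Gagliardo--Nirenberg bounds $\|u\|_q^q\lesssim\|\nabla u\|_2^{q\gamma_q}a^{q(1-\gamma_q)/2}$ and $\|u\|_p^p\lesssim\|\nabla u\|_2^{p\gamma_p}a^{p(1-\gamma_p)/2}$ into the denominator; the decisive point is that the resulting power of $\|\nabla u\|_2$ is exactly the expression above, hence vanishes, leaving a strictly positive constant depending only on $a,N,p,q$. For (b) I would use the amplitude scaling $u\mapsto\sqrt a\,u$, a bijection $\mathcal{S}_1\to\mathcal{S}_a$, which gives $\mu_p(\sqrt a\,u)=a^{\theta}\mu_p(u)$ with $\theta=-\frac{p-q}{p\gamma_p-2}$ after the $N$-dependent terms cancel, so that $\mu^*_{a,p}=a^{\theta}\mu^*_{1,p}$; raising to the power $\frac{p\gamma_p-2}{p\gamma_p-q\gamma_q}$ turns the exponent into $-\frac2N$, which is precisely the simplified form of the exponent displayed in the statement. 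Since $-\frac2N<0$ and $\mu^*_{1,p}>0$ by (a), strict monotonicity in $a$ follows.

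The core of (c) is the achievement of the infimum, and this is the main obstacle of the whole proposition. I would take a minimizing sequence, replace each term by its Schwarz symmetrization (which preserves every $L^r$ norm and does not increase $\|\nabla\cdot\|_2$, hence does not increase $\mu_p$), and then use the dilation invariance from (a) to normalize $\|\nabla u_n\|_2^2=1$ while staying on $\mathcal{S}_a$. The sequence is then bounded in $H^1_{\mathrm{rad}}(\bbr^N)$, so along a subsequence $u_n\rightharpoonup u_0$ with \emph{strong} convergence in $L^q$ and $L^p$ (here $p<2^*$ is essential), and the boundedness of $\mu_p(u_n)$ keeps the denominator away from $0$, so $u_0\neq0$. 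The difficulty is that $H^1\hookrightarrow L^2$ is not compact, so a priori only $a'=\|u_0\|_2^2\leq a$. To close this gap I would combine weak lower semicontinuity, which yields $\mu_p(u_0)\leq\mu^*_{a,p}$, with the scaling from (b), which yields $\mu_p(u_0)\geq\mu^*_{a',p}\geq\mu^*_{a,p}$; the \emph{strict} monotonicity in (b) then forces $a'=a$, so $u_0\in\mathcal{S}_a$ is a genuine minimizer. This no-mass-loss step is where the argument really hinges on (b).

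The second half of (c) is then soft. By Proposition~\ref{prop001} the fibering map $\Phi_{\mu,p,u}$ has a degenerate critical point if and only if $\mu=\mu_p(u)$, so $\mathcal{P}^0_{a,\mu,p}\neq\emptyset$ precisely when $\mu$ lies in the range of $\mu_p$ over $\mathcal{S}_a$. For $\mu<\mu^*_{a,p}$ this range is empty by definition of the infimum, giving $\mathcal{P}^0_{a,\mu,p}=\emptyset$. For $\mu\geq\mu^*_{a,p}$ I would note that $\mathcal{S}_a$ is path-connected and $\mu_p$ is continuous, so its range is an interval; it contains $\mu^*_{a,p}$ (now attained) and is unbounded above, which I would confirm with an oscillating family such as $u_n\approx\phi(x)\cos(nx_1)$ rescaled onto $\mathcal{S}_a$, for which $\|\nabla u_n\|_2^2\sim n^2$ while $\|u_n\|_q$ and $\|u_n\|_p$ stay bounded, so $\mu_p(u_n)\to\infty$. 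Hence the range is exactly $[\mu^*_{a,p},\infty)$ and $\mathcal{P}^0_{a,\mu,p}\neq\emptyset$ for all $\mu\geq\mu^*_{a,p}$.

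Finally, for (d) I would prove the two inequalities separately. For $\limsup_{p\uparrow2^*}\mu^*_{a,p}\leq\mu^*_a$, fix $u\in C_c^\infty\cap\mathcal{S}_a$ and use $\|u\|_p^p\to\|u\|_{2^*}^{2^*}$ and $\gamma_p\to1$ to get $\mu_p(u)\to\mu(u)$, so $\limsup_p\mu^*_{a,p}\leq\mu(u)$; taking the infimum over such $u$ (dense in $\mathcal{S}_a$) gives the bound. For the reverse inequality I would take the minimizers $u_p$ from (c), normalize $\|\nabla u_p\|_2^2=1$ by dilation, and bound the denominator from above through the interpolation inequality $\|u_p\|_p\leq\|u_p\|_2^{1-t_p}\|u_p\|_{2^*}^{t_p}$ with $t_p\to1$; inserting this into $\mu_p(u_p)$ and comparing with $\mu(u_p)\geq\mu^*_a$ should yield $\mu_p(u_p)\geq\mu(u_p)(1+o(1))\geq\mu^*_a(1+o(1))$. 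The delicate point here is uniformity as $p\uparrow2^*$: one must control $\|u_p\|_{2^*}$ from above (by the Sobolev inequality) and from below (from the boundedness of $\mu_p(u_p)$) so that the $p$-dependent powers converge to those of $\mu$, and this is exactly where the Sobolev-critical limit must be treated with care. Combining the two inequalities then gives $\lim_{p\uparrow2^*}\mu^*_{a,p}=\mu^*_a$.
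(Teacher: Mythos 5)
Your proposal is correct, and for parts (a), (b) and (d) it follows essentially the same route as the paper: direct scaling plus Gagliardo--Nirenberg for (a); a bijective rescaling between $\mathcal{S}_1$ and $\mathcal{S}_a$ for (b) (you use the amplitude map $u\mapsto\sqrt{a}\,u$, the paper uses the $L^p$-norm-preserving dilation $s^{N/p}u(sx)$ with $s^{N(p-2)/p}=a$ --- equivalent, and your simplification of the exponent to $-2/N$ checks out); and for (d) the same two-sided argument, with the lower bound obtained by interpolating $\|u_p\|_p$ between $\|u_p\|_2$ and $\|u_p\|_{2^*}$ exactly as the paper does via H\"older (you normalize $\|\nabla u_p\|_2=1$ where the paper normalizes $\|u_p\|_{2^*}=1$; both normalizations give the needed two-sided control of $\|u_p\|_{2^*}$). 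The one genuinely different step is the no-mass-loss argument in (c): the paper writes a quantitative Brezis--Lieb-type expansion (its display \eqref{002}) showing $\left(\mu^*_{a,p}\right)^{\frac{p\gamma_p-2}{p\gamma_p-q\gamma_q}}\ge\left(\mu^*_{a_1,p}\right)^{\frac{p\gamma_p-2}{p\gamma_p-q\gamma_q}}+C\|\nabla v_n\|_2^2+o(1)$, which simultaneously forces $a_1=a$ and $v_n\to0$ strongly in $H^1$, whereas you combine weak lower semicontinuity of the gradient term ($\mu_p(u_0)\le\mu^*_{a,p}$) with the strict monotonicity from (b) ($\mu_p(u_0)\ge\mu^*_{a_1,p}\ge\mu^*_{a,p}$) to squeeze $a_1=a$. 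Your version is softer and cleaner; it does not directly yield strong $H^1$ convergence of the minimizing sequence, but that is not needed for the statement being proved. You also supply a detail the paper leaves implicit, namely that $\mathcal{P}^0_{a,\mu,p}\neq\emptyset$ for all $\mu\ge\mu^*_{a,p}$ because the range of the continuous $0$-homogeneous functional $\mu_p$ on the path-connected set $\mathcal{S}_a$ is the interval $[\mu^*_{a,p},\infty)$; your oscillating test family showing unboundedness of $\mu_p$ is a correct way to close that point.
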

\begin{proof}
$(a)$\quad It follows immediately from direct calculations and the Gagliardo-Nirenberg inequality.  

\vskip0.06in

$(b)$\quad We first notice that by the conclusion~$(a)$,
\begin{align}\label{eqnWu0002}
\left(\mu_{a,p}^*\right)^\frac{p\gamma_p-2}{p\gamma_p-q\gamma_q}=\inf_{u\in \mathcal{S}_a,\|u\|_p^p=1}\left(\mu_p(u)\right)^\frac{p\gamma_p-2}{p\gamma_p-q\gamma_q}.
\end{align}
Moreover, for any $u_{a}\in \mathcal{S}_{a}$, we define $u_{1}=(u_{a})^s=s^\frac{N}{p}u_{a}(sx)$ with $s^{\frac{N(p-2)}{p}}=a$.  Then $u_{1}\in \mathcal{S}_{1}$, $\|u_{1}\|_p=\|u_{a}\|_p$ and 
\begin{equation*}
\left(\mu_p(u_1)\right)^\frac{p\gamma_p-2}{p\gamma_p-q\gamma_q}=a^{(2(1-\gamma_p)+\frac{(p-q)N}{p}\frac{p\gamma_p-2}{p\gamma_p-q\gamma_q})\frac{p}{N(p-2)}}\left(\mu_p(u_{a})\right)^\frac{p\gamma_p-2}{p\gamma_p-q\gamma_q}.
\end{equation*}
It follows that
\begin{eqnarray*}
\left(\mu_{1,p}^*\right)^\frac{p\gamma_p-2}{p\gamma_p-q\gamma_q}=a^{(2(1-\gamma_p)+\frac{(p-q)N}{p}\frac{p\gamma_p-2}{p\gamma_p-q\gamma_q})\frac{p}{N(p-2)}}\left(\mu_{a,p}^*\right)^\frac{p\gamma_p-2}{p\gamma_p-q\gamma_q}.
\end{eqnarray*}
Since $2<q<2+\frac{4}{N}<p\leq2^*$, we have $2(1-\gamma_p)+\frac{(p-q)N}{p}\frac{p\gamma_p-2}{p\gamma_p-q\gamma_q}>0$ which implies $\mu_{a,p}^*$ is strictly decreasing in terms of $a>0$.

\vskip0.06in

$(c)$\quad By the Schwartz symmetrization and \eqref{eqnWu0002}, we can choose a minimizing sequence, say $\{u_n\}$, which is radially symmetric and radially decreasing and $\|u_n\|_p=1$.  It follows that
\begin{equation*}
\left(\mu_{a,p}^*+o(1)\right)\|u_n\|_q^q=C\left(\|\nabla u_n\|_2^2\right)^\frac{p\gamma_p-q\gamma_q}{p\gamma_p-2},
\end{equation*}
which, together with the Gagliardo-Nirenberg inequality and $2<q<2+\frac{4}{N}$, implies that $\{u_n\}$ is bounded in $H^1(\bbr^N)$.  Thus, up to a subsequence, $u_n\rightharpoonup u_0$ weakly in $H^1(\bbr^N)$ as $n\to\infty$ for some $u_0\in H^1(\bbr^N)$ and by Strauss's radial lemma (cf. \cite[Lemma~A. I']{BerestyckiLions1983} or \cite[Lemma~1.24]{Willem1996}), we also have $u_n\to u_0$ strongly in $L^r(\bbr^N)$ as $n\to\infty$ for all $2<r<2^*$.  Clearly, by $\|u_n\|_p=1$, we know that $u_0\not=0$ and by the Fatou lemma, we have $0<\|u_0\|_2^2=a_1\le a$.  Let $v_n=u_n-u_0$ then by the conclusion~$(b)$, 
\begin{align}\label{002}
\left(\mu_{a,p}^*\right)^\frac{p\gamma_p-2}{p\gamma_p-q\gamma_q}=&\left(\mu_p(u_0)\right)^\frac{p\gamma_p-2}{p\gamma_p-q\gamma_q}+C\frac{\|\nabla v_n\|_2^2}{\left(\|u_0\|_q^q\right)^\frac{p\gamma_p-2}{p\gamma_p-q\gamma_q}}+o(1)\nonumber\\
\ge&\left(\mu_{a_1,p}^*\right)^\frac{p\gamma_p-2}{p\gamma_p-q\gamma_q}+C\frac{\|\nabla v_n\|_2^2}{\left(\|u_0\|_q^q\right)^\frac{p\gamma_p-2}{p\gamma_p-q\gamma_q}}+o(1)\nonumber\\
\geq&\left(\mu_{a,p}^*\right)^\frac{p\gamma_p-2}{p\gamma_p-q\gamma_q}+C\frac{\|\nabla v_n\|_2^2}{\left(\|u_0\|_q^q\right)^\frac{p\gamma_p-2}{p\gamma_p-q\gamma_q}}+o(1)
\end{align}
where $C>0$ is a constant.  It follows from \eqref{002} that $a_1=a$ and 
$v_n\to0$ strongly in $H^1(\bbr^N)$ as $n\to\infty$.  Thus, $\mu_{a,p}^*$ is achieved by $u_0\in \mathcal{S}_a$.  The remaining conclusions of $(c)$ follow immediately from Proposition~\ref{prop001}.

\vskip0.06in

$(d)$\quad For any $\epsilon>0$, by the definition of $\mu_{a}^*$, there exists $u_\epsilon\in \mathcal{S}_a$ such that $\mu(u_\epsilon)<\mu_{a}^*+\epsilon$.  Then by the dominated convergence theorem and the absolute continuity of $\|u_\epsilon\|_{2^*}$,
\begin{equation*}
\lim_{p\uparrow2^*}\mu_{a,p}^*\le\lim_{p\uparrow2^*}\mu_p(u_\epsilon)=\mu(u_\epsilon)<\mu_{a}^*+\epsilon,
\end{equation*}
which, together with the arbitrariness of $\epsilon>0$, implies that $\lim_{p\uparrow2^*}\mu_{a,p}^*\leq\mu_{a}^*$.  On the other hand, by the conclusions~$(a)$ and $(b)$, there exists $u_p\in \mathcal{S}_a$ such that 
\begin{eqnarray*}
\mu_p(u_p)=\mu_{a,p}^*\quad\text{and}\quad \|u_p\|_{2^*}^{2^*}=1.
\end{eqnarray*}
Since $|u_p|^p\leq |u_p|^2+|u_p|^{2^*}$, by $\mu_p(u_p)=\mu_{a,p}^*$ and $\lim_{p\uparrow 2^*}\mu_{a,p}^*\leq\mu_{a}^*$, we know that $\{u_p\}$ is bounded in $H^1(\bbr^N)$.  It follows from the H\"older inequality that
\begin{align*}
\mu_{a,p}^*=\tilde{C}_{N,q,p}\frac{\left(\|\nabla u_p\|_2^2\right)^\frac{p\gamma_p-q\gamma_q}{p\gamma_p-2}}{\|u_p\|_q^q\left(\|u_p\|_p^p\right)^\frac{2-q\gamma_q}{p\gamma_p-q\gamma_q}}\geq \tilde{C}_{N,q,p}\frac{\left(\|\nabla u_p\|_2^2\right)^\frac{2^*-q\gamma_q}{2^*-2}}{\|u_p\|_q^q\left(\|u_p\|_{2^*}^{2^*}\right)^{\frac{2-q\gamma_q}{2^*-2}}}+o(1)
\geq\mu_{a}^*+o(1),
\end{align*}
where
\begin{eqnarray*}
\tilde{C}_{N,q,p}=\frac{(p\gamma_p-2)(2-q\gamma_q)^{\frac{2-q\gamma_q}{p\gamma_p-2}}}{\gamma_q(p\gamma_p-q\gamma_q)^{\frac{p\gamma_p-q\gamma_q}{p\gamma_p-2}}\gamma_p^{\frac{2-q\gamma_q}{p\gamma_p-2}}}.
\end{eqnarray*}
Thus, we also have $\lim_{p\uparrow2^*}\mu_{a,p}^*\geq\mu_{a}^*$, which, together with $\lim_{p\uparrow2^*}\mu_{a,p}^*\leq\mu_{a}^*$, implies that $\lim_{p\uparrow2^*}\mu^*_{a,p}=\mu^*_a$.
\end{proof}

\vskip0.12in

\section{The existence of solutions of $(\ref{eqd106})$ for $\mu<\mu^*_a$}
\subsection{The existence of ground-state solutions}
In this section, we shall study the variational problem:
\begin{equation}\label{eqnWu0003}
m^+(a,\mu)=\inf_{u\in \mathcal{P}_{a, \mu}^+}\Psi_\mu(u).
\end{equation}
We begin with the following properties.
\begin{lemma}\label{lem001}
Let $N\geq3$, $2<q<2+\frac{4}{N}<p\leq 2^*$, $a>0$ and $\mu\in\left(0,\mu^*_{a,p}\right)$.  Then
\begin{itemize}
    \item [$(a)$]\quad $m^-(a,\mu)\ge m^+(a,\mu)$ and $m^+(a,\mu)<0$.
    \item [$(b)$]\quad for all $u_a\in\mathcal{S}_{a}$ and $b>a$ such that $\mu^*_{b,p}>\mu$, we have
\begin{eqnarray*}
\Psi_{\mu,p}(u_a)>\Psi_{\mu,p}\left(\left(\sqrt{\frac{b}{a}}u_{a}\right)_{t_{\mu,p}^\pm\left(\sqrt{\frac{b}{a}}u_{a}\right)}\right).
\end{eqnarray*}
In particular, $m^\pm(a,\mu)\ge m^\pm(b,\mu)$.
 \end{itemize}
\end{lemma}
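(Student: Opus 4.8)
\emph{Proof proposal.} For part~(a) I would read both assertions off the fibering picture of Proposition~\ref{prop001}. Since $\mu<\mu^*_{a,p}\le\mu_p(u)$ for every $u\in\mathcal S_a$, each $\Phi_{\mu,p,u}$ has exactly the two critical points $t_{\mu,p}^{+}(u)<t_{\mu,p}^{-}(u)$, a strict local minimum and a strict local maximum. Given $v\in\mathcal P_{a,\mu,p}^-$ (so $t_{\mu,p}^{-}(v)=1$), the rescaled function $(v)_{t_{\mu,p}^{+}(v)}$ lies in $\mathcal P_{a,\mu,p}^+$, and since $\Phi_{\mu,p,v}$ increases on $\big(t_{\mu,p}^{+}(v),1\big)$ its energy is strictly below $\Psi_{\mu,p}(v)$; infimizing over $v$ gives $m^+(a,\mu)\le m^-(a,\mu)$. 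For the sign, the hypothesis $q<2+\frac4N$ is exactly $q\gamma_q<2$, which forces $\Phi_{\mu,p,u}'(s)<0$ for $s$ near $0$; as $\big(0,t_{\mu,p}^{+}(u)\big)$ carries no critical point, $\Phi_{\mu,p,u}$ decreases strictly from $\lim_{s\to0^+}\Phi_{\mu,p,u}(s)=0$ down to $\Phi_{\mu,p,u}\big(t_{\mu,p}^{+}(u)\big)<0$, so every point so produced lies in $\mathcal P_{a,\mu,p}^+$ with negative energy and $m^+(a,\mu)<0$.

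The heart of the lemma is the monotonicity statement~(b). Writing $c=\frac ba>1$ and $w=\sqrt{c}\,u_a\in\mathcal S_b$, the identity $(w)_t=\sqrt c\,(u_a)_t$ yields
\[
\Phi_{\mu,p,w}(t)=\frac{c}{2}t^{2}\|\nabla u_a\|_2^2-\frac{\mu c^{q/2}}{q}t^{q\gamma_q}\|u_a\|_q^q-\frac{c^{p/2}}{p}t^{p\gamma_p}\|u_a\|_p^p .
\]
I would freeze the profile $u_a$ and track $f(c):=\Phi_{\mu,p,\sqrt c\,u_a}\big(t_{\mu,p}^{\pm}(\sqrt c\,u_a)\big)$ for $c\ge1$. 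Because the critical points are nondegenerate whenever $\mu<\mu_p(\cdot)$, the points $t_{\mu,p}^{\pm}(\sqrt c\,u_a)$ vary in a $C^1$ fashion with $c$ (implicit function theorem), and since the $t$-derivative vanishes at a critical point the envelope identity leaves $f'(c)=\partial_c\Phi_{\mu,p,\sqrt c\,u_a}(t)\big|_{t=t_{\mu,p}^{\pm}}$. Eliminating the gradient term by the critical-point relation $c t^{2}\|\nabla u_a\|_2^2=\mu c^{q/2}\gamma_q t^{q\gamma_q}\|u_a\|_q^q+c^{p/2}\gamma_p t^{p\gamma_p}\|u_a\|_p^p$ collapses this to
\[
f'(c)=\frac{1}{2c}\Big(\mu c^{q/2}t^{q\gamma_q}\|u_a\|_q^q(\gamma_q-1)+c^{p/2}t^{p\gamma_p}\|u_a\|_p^p(\gamma_p-1)\Big)<0,
\]
since $\gamma_q<1$ and $\gamma_p\le1$ (the second term simply drops when $p=2^*$, while the first stays negative because $\|u_a\|_q^q>0$). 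Hence $f$ is strictly decreasing and $f(c)<f(1)$; when $u_a\in\mathcal P_{a,\mu,p}^{\pm}$ we have $t_{\mu,p}^{\pm}(u_a)=1$, so $f(1)=\Psi_{\mu,p}(u_a)$ and this is precisely the asserted inequality.

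The ``in particular'' then follows by infimizing: for $u_a\in\mathcal P_{a,\mu,p}^+$ the function $(w)_{t_{\mu,p}^{+}(w)}$ belongs to $\mathcal P_{b,\mu,p}^+$, whence $\Psi_{\mu,p}(u_a)>\Psi_{\mu,p}\big((w)_{t_{\mu,p}^{+}(w)}\big)\ge m^+(b,\mu)$, and taking the infimum over $u_a$ gives $m^+(a,\mu)\ge m^+(b,\mu)$; the minus case is identical. The step I expect to be the main obstacle is ensuring that both critical points $t_{\mu,p}^{\pm}(\sqrt c\,u_a)$ persist for \emph{every} intermediate $c\in[1,\frac ba]$, so that $f$ is well defined and smooth along the path. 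This needs $\mu<\mu_p(\sqrt c\,u_a)$ throughout, which I would secure from $\sqrt c\,u_a\in\mathcal S_{ca}$ together with $\mu_p(\sqrt c\,u_a)\ge\mu^*_{ca,p}\ge\mu^*_{b,p}>\mu$, using the monotonicity of $\mu^*_{\cdot,p}$ in the mass from Proposition~\ref{propmup}(b) and the hypothesis $\mu^*_{b,p}>\mu$. A related subtlety worth flagging is that the displayed inequality is genuinely a statement about fibered projections: it holds with $\Psi_{\mu,p}(u_a)$ on the left exactly when $u_a$ already lies on $\mathcal P_{a,\mu,p}^{\pm}$, and this is the only case the comparison of minimax levels uses.
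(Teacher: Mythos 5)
Your argument is correct and is essentially the paper's own proof: part (a) is read off the fibering-map classification of Proposition~\ref{prop001}, and part (b) is exactly the derivative-in-$b$ (envelope) computation that the paper delegates to \cite[Lemma~3.2]{WeiWu2022}, including the reduction of $f'(c)$ to $\frac{1}{2c}\bigl(\mu c^{q/2}t^{q\gamma_q}\|u_a\|_q^q(\gamma_q-1)+c^{p/2}t^{p\gamma_p}\|u_a\|_p^p(\gamma_p-1)\bigr)<0$ and the use of Proposition~\ref{propmup}(b) to keep both branches $t^{\pm}_{\mu,p}$ alive along the whole mass path. Your closing caveat is also well taken: the displayed inequality is only meaningful (and only used) for $u_a\in\mathcal{P}^{\pm}_{a,\mu,p}$, where $\Psi_{\mu,p}(u_a)=f(1)$, which is precisely how the paper applies it to get $m^{\pm}(a,\mu)\ge m^{\pm}(b,\mu)$.
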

\begin{proof}
$(a)$\quad Since $\mu\in\left(0,\mu^*_{a,p}\right)$, the conclusion follows immediately from Proposition~\ref{prop001}.

$(b)$\quad Since $\mu\in\left(0,\mu^*_{a,p}\right)$, by Proposition~\ref{prop001} and $(b)$ of Proposition~\ref{propmup}, there exist $t_{\mu,p}^\pm\left(\sqrt{\frac{b}{a}}u_{a}\right)>0$ such that $\left(\sqrt{\frac{b}{a}}u_{a}\right)_{t_{\mu,p}^\pm\left(\sqrt{\frac{b}{a}}u_{a}\right)}\in\mathcal{P}_{b,\mu,p}^\pm$.
The rest of the proof is similar to that of \cite[Lemma~3.2]{WeiWu2022} by computing the derivative of $\Psi_{\mu,p}\left(\left(\sqrt{\frac{b}{a}}u_{a}\right)_{t_{\mu,p}^\pm\left(\sqrt{\frac{b}{a}}u_{a}\right)}\right)$ in terms of $b$ with trivial modifications, so we omit it here.
\end{proof}

\vskip0.12in

With Lemma~\ref{lem001} in hands, we can prove the following.
\begin{proposition}\label{propWu0001}
Let $N\ge 3$, $2<q<2+\frac{4}{N}$, $a>0$ and $0<\mu<\mu^{*}_{a}$.  Then the variational problem~\eqref{eqnWu0003} is achieved by some $u_{a,\mu,+}$, which is real valued, positive, radially symmetric and radially decreasing.
Moreover, $u_{a,\mu,+}$ also satisfies the Schr\"{o}dinger equation~\eqref{eqd106} for a suitable Lagrange multiplier $\lambda=\lambda_{a, \mu, +}<0$.
\end{proposition}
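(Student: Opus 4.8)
The plan is to run the direct method on the constraint set $\mathcal{P}_{a,\mu}^+$, the only genuine difficulty being the loss of compactness at the critical exponent $2^*$, which I will overcome by exploiting the strict negativity $m^+(a,\mu)<0$ furnished by Lemma~\ref{lem001}$(a)$.

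First I would reduce the problem to radial functions. Let $\{u_n\}\subset\mathcal{P}_{a,\mu}^+$ be a minimizing sequence for $m^+(a,\mu)$. Replacing each $u_n$ by its Schwarz symmetrization leaves $\|u_n\|_2$, $\|u_n\|_q$ and $\|u_n\|_{2^*}$ unchanged and does not increase $\|\nabla u_n\|_2$; reprojecting the symmetrized function onto $\mathcal{P}_{a,\mu}^+$ through the strict local minimum point $t_{\mu,2^*}^+(\cdot)$ of the fibering map (available since $\mu<\mu_a^*$, by Proposition~\ref{prop001}$(a)$) does not increase $\Psi_\mu$. Hence I may assume that every $u_n$ is nonnegative, radially symmetric and radially decreasing. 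Writing $\Psi_\mu$ on the manifold as $\Psi_\mu(u_n)=\frac1N\|\nabla u_n\|_2^2-\mu\frac{2^*-q\gamma_q}{q\,2^*}\|u_n\|_q^q$ and using the Gagliardo--Nirenberg inequality together with $q\gamma_q<2$, one sees that $\Psi_\mu$ is coercive on $\mathcal{P}_{a,\mu}$, so $\{u_n\}$ is bounded in $H^1(\bbr^N)$.

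Next I would pass to the limit. Up to a subsequence $u_n\rightharpoonup u_0$ in $H^1(\bbr^N)$, and by Strauss' radial compactness $u_n\to u_0$ in $L^r(\bbr^N)$ for every $r\in(2,2^*)$; in particular $\|u_n\|_q\to\|u_0\|_q$. Since $m^+(a,\mu)<0$ forces $\liminf\|u_n\|_q>0$ in the displayed expression for $\Psi_\mu$, we get $u_0\neq0$. The constrained critical point equation provides Lagrange multipliers $\lambda_n$ with $-\Delta u_n=\lambda_n u_n+\mu|u_n|^{q-2}u_n+|u_n|^{2^*-2}u_n+o(1)$ in $H^{-1}(\bbr^N)$; a standard computation shows $\{\lambda_n\}$ is bounded, so $\lambda_n\to\lambda$ and $u_0$ solves \eqref{eqd106} with $\|u_0\|_2^2=:a_1\in(0,a]$. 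Testing this equation with $u_0$ and subtracting the Pohozaev identity $\|\nabla u_0\|_2^2=\mu\gamma_q\|u_0\|_q^q+\|u_0\|_{2^*}^{2^*}$ yields $\lambda a_1=-\mu(1-\gamma_q)\|u_0\|_q^q$, and since $\gamma_q<1$ this gives $\lambda<0$.

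The heart of the matter, and the step I expect to be the main obstacle, is to upgrade weak to strong convergence. Put $v_n=u_n-u_0\rightharpoonup0$; subtracting the equations and using the Brezis--Lieb lemma, $v_n$ asymptotically solves $-\Delta v_n=\lambda v_n+|v_n|^{2^*-2}v_n+o(1)$, while $\Psi_\mu(u_n)=\Psi_\mu(u_0)+\big(\tfrac12\|\nabla v_n\|_2^2-\tfrac1{2^*}\|v_n\|_{2^*}^{2^*}\big)+o(1)$. I would first exclude any escape of $L^2$-mass: in the radial class a weakly null remainder carrying $L^2$-mass would, after a concentration-compactness analysis, produce a nontrivial $H^1$-solution of $-\Delta w+|\lambda|w=|w|^{2^*-2}w$, which is impossible by the Pohozaev identity; hence $a_1=a$ and $\|v_n\|_2\to0$. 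With $a_1=a$ the remainder energy reduces to $\tfrac1N\lim\|\nabla v_n\|_2^2\ge0$, and if this limit $A$ were positive then testing the limiting equation against $v_n$ and the Sobolev inequality would force $A\ge S^{N/2}$, so the remainder would contribute at least $\tfrac1N S^{N/2}>0$; combined with $\Psi_\mu(u_0)\ge m^+(a,\mu)$ this would contradict $m^+(a,\mu)<0$. Therefore $A=0$, so $v_n\to0$ in $H^1(\bbr^N)$ and $u_n\to u_0$ strongly. Consequently $u_0\in\mathcal{P}_{a,\mu}^+$ (using $\mathcal{P}_{a,\mu}^0=\emptyset$ from Proposition~\ref{propmup}) and $\Psi_\mu(u_0)=m^+(a,\mu)$. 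Finally, $u_0\ge0$ is radially symmetric and radially decreasing by construction, $u_0>0$ by the strong maximum principle, elliptic regularity gives the required smoothness, and $\lambda_{a,\mu,+}:=\lambda<0$ is the desired Lagrange multiplier.
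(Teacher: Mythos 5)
Your outline follows the same broad strategy as the paper (symmetrized minimizing sequence, boundedness, Strauss radial compactness, $u_0\neq 0$ from $m^+(a,\mu)<0$, Brezis--Lieb splitting), but it has a genuine gap at the decisive step. You write that ``the constrained critical point equation provides Lagrange multipliers $\lambda_n$ with $-\Delta u_n=\lambda_n u_n+\mu|u_n|^{q-2}u_n+|u_n|^{2^*-2}u_n+o(1)$ in $H^{-1}(\bbr^N)$.'' A minimizing sequence for $\Psi_\mu$ on the submanifold $\mathcal{P}_{a,\mu}^{+}$ of $\mathcal{S}_a$ does \emph{not} automatically satisfy an approximate Euler--Lagrange equation with only the mass multiplier: $\mathcal{P}_{a,\mu}^{+}$ is cut out by the additional Pohozaev constraint, so a priori an almost-critical-point argument produces a second multiplier attached to that constraint, and nothing forces it to vanish along a mere minimizing sequence (it vanishes for actual minimizers because the constraint is natural, but a minimizer is exactly what you are trying to produce). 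Everything downstream of that sentence --- the sign of $\lambda$, the asymptotic equation for $v_n$, the Pohozaev identity for $u_0$, the fact that $u_0$ solves \eqref{eqd106} --- depends on it. The paper closes this gap with the most technical part of its proof: it first establishes \eqref{equ020}, namely that for $\delta>0$ small the infimum of $\Psi_\mu$ over the $\delta$-neighborhood $\mathcal{P}_{a,\mu}^{\delta,+}$ of $\mathcal{P}_{a,\mu}^{+}$ inside $\mathcal{S}_a$ still equals $m^+(a,\mu)$ (a contradiction argument using the fibering-map projections $t^+_{\mu,2^*}$ and the monotone function $g(t)=\frac{t^{2^*-q\gamma_q}-1}{t^{2-q\gamma_q}-1}$), and only then applies Ekeland's variational principle on that closed set to manufacture a genuine $(PS)_{m^+(a,\mu)}$ sequence $\{\tilde w_n\}$ of $\Psi_\mu|_{\mathcal{S}_a}$ with $\tilde w_n-u_n\to 0$ in $H^1(\bbr^N)$. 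You need this (or an equivalent device) before you may write down the $\lambda_n$'s.

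A secondary issue: your exclusion of $L^2$-mass escape (``a weakly null remainder carrying $L^2$-mass would produce a nontrivial $H^1$-solution of $-\Delta w+|\lambda|w=|w|^{2^*-2}w$'') is not correct as stated, because a vanishing remainder such as $v_n=n^{-N/2}\phi(\cdot/n)$ carries fixed $L^2$-mass while producing no nontrivial profile; the conclusion is still recoverable by testing the asymptotic equation for $v_n$ against $v_n$ and using $\lambda<0$, or, as the paper does, by invoking the strict monotonicity $m^+(a_1,\mu)>m^+(a,\mu)$ for $a_1<a$ from Lemma~\ref{lem001}$(b)$ in the energy splitting \eqref{eqnWu0005}.
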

\begin{proof}
Let $\{u_n\}\subset \mathcal{P}_{a,\mu}^+$ be a minimizing sequence of $m^+(a,\mu)$.  By the Schwartz symmetrization and Proposition~\ref{prop001}, we may assume that $\{u_n\}$ are real valued, nonnegative, radially symmetric and radially decreasing.  Since $\{u_n\}\subset \mathcal{P}_{a,\mu}^+$, we have
\begin{equation}\label{eqnWu0017}
(2^*-2)\|\nabla u_n\|^2_2-(2^*\gamma_q-q\gamma_q)\mu\gamma_q\|u_n\|_q^q<0.
\end{equation}
It follows from the Gagliardo-Nirenberg inequality that $\{u_n\}$ is bounded in $H^1(\bbr^N)$.  Thus, up to a subsequence, $u_n\rightharpoonup u_0$ weakly in $H^1(\bbr^N)$ as $n\to\infty$ for some $u_0\in H^1(\bbr^N)$ and by Strauss's radial lemma (cf. \cite[Lemma~A. I']{BerestyckiLions1983} or \cite[Lemma~1.24]{Willem1996}), we also have $u_n\to u_0$ strongly in $L^r(\bbr^N)$ as $n\to\infty$ for all $2<r<2^*$.  Clearly, $u_0$ is real valued, nonnegative, radially symmetric and radially decreasing.  Let $v_n=u_n-u_0$ then one of the following two cases must happen:
\begin{itemize}
    \item [$(i)$]\quad $u_0=0$.
    \item [$(ii)$]\quad $u_0\not=0$.
\end{itemize}
If the case~$(i)$ happens then $\|u_n\|_q^q\to 0$ as $n\to\infty$.  Moreover, by $\{u_n\}\subset \mathcal{P}_{a,\mu}^+$ and the Sobolev inequality, either $\|\nabla u_n\|_2^2=\|u_n\|^{2^*}_{2^*}=o(1)$ or $\|\nabla u_n\|_2^2=\|u_n\|^{2^*}_{2^*}+o(1)\ge S^{\frac{N}{2}}+o(1)$, which implies that either $\Psi_\mu(u_n)=o(1)$ or $\Psi_\mu(u_n)\ge\frac{1}{N}S^{\frac{N}{2}}+o(1)$.  These contradict $(a)$ of Lemma~\ref{lem001}.  Thus, the case~$(i)$ can not happen.  If the case~$(ii)$ happens then we claim that there exists $\delta>0$ sufficiently small such that
\begin{equation}\label{equ020}
\inf_{\mathcal{P}_{a,\mu}^{\delta,+}}\Psi_\mu(u)=\inf_{\mathcal{P}_{a,\mu}}\Psi_\mu(u)=\inf_{\mathcal{P}_{a,\mu}^+}\Psi_\mu(u)=m^+(a, \mu),
\end{equation}
where $\mathcal{P}_{a,\mu}^{\delta,+}=\{u\in \mathcal{S}_a\mid \text{dist}_{H^1}(u,\mathcal{P}_{a,\mu}^+)\le\delta\}$ with 
\begin{eqnarray*}
\text{dist}_{H^1}(u,\mathcal{P}_{a,\mu}^+)=\inf_{v\in\mathcal{P}_{a,\mu}^+}(\|\nabla u-\nabla v\|_2^2+\|u-v\|_2^2)^{\frac12}.
\end{eqnarray*}
Suppose the contrary.  Then by $(a)$ of Lemma~\ref{lem001}, there exists $\delta_n\to0$ as $n\to\infty$,  $w_n\in \mathcal{S}_a$ and $\phi_n\in \mathcal{P}_{a,\mu}^+$ such that $w_n-\phi_n\to0$ strongly in $H^1(\bbr^N)$ as $n\to\infty$ and $\Psi_\mu(w_n)<m^+(a,\mu)$ for all $n$.
Since $0<\mu<\mu^{*}_{a}$, by Proposition~\ref{prop001}, there exists $t_{\mu,2^*}^+(w_n)>0$ such that $\left(w_n\right)_{t^+_{\mu,2^*}(w_n)}\in\mathcal{P}_{a,\mu}^+$.  It follows from $\phi_n\in \mathcal{P}_{a,\mu}^+$ and $w_n-\phi_n\to0$ strongly in $H^1(\bbr^N)$ as $n\to\infty$ that
\begin{equation}\label{026}
\left\{\array{ll}\left(t_{\mu,2^*}^+\left(w_n\right)\right)^{2-q\gamma_q}\|\nabla w_n\|_2^2=\mu\gamma_q\|w_n\|_q^q+\left(t_{\mu,2^*}^+\left(w_n\right)\right)^{2^*-q\gamma_q}\|w_n\|_{2^*}^{2^*},
\\
\|\nabla w_n\|_2^2=\mu\gamma_q\|w_n\|_q^q+\|w_n\|_{2^*}^{2^*}+o(1).\endarray\right.
\end{equation}
By \eqref{026}, it is easy to see that $t_{\mu,2^*}^+\left(w_n\right)\sim1$.  Thus, up to a subsequence, we may assume that $t_{\mu,2^*}^+\left(w_n\right)\to t_{\mu,2^*}$ as $n\to\infty$.  It follows from \eqref{026} and $w_n-\phi_n\to0$ strongly in $H^1(\bbr^N)$ as $n\to\infty$ that
\begin{equation*}
\left(\left(t_{\mu,2^*}^+\left(w_n\right)\right)^{2-q\gamma_q}-1\right)\|\nabla \phi_n\|_2^2-\left(\left(t_{\mu,2^*}^+\left(w_n\right)\right)^{2^*-q\gamma_q}-1\right)\|\phi_n\|_{2^*}^{2^*}=o(1).
\end{equation*}
As in the case~$(i)$, we know that $\|\phi_n\|_{2^*}\gtrsim1$, which implies that
\begin{equation}\label{022}
\frac{\|\nabla \phi_n\|_2^2}{\|\phi_n\|_{2^*}^{2^*}}=\frac{\left(t_{\mu,2^*}^+\left(w_n\right)\right)^{2^*-q\gamma_q}-1}{\left(t_{\mu,2^*}^+\left(w_n\right)\right)^{2-q\gamma_q}-1}+o(1).
\end{equation}
By Proposition~\ref{prop001} and $0<\mu<\mu^{*}_{a}$, 
\begin{equation*}
1=t_{\mu,2^*}^+\left(\phi_n\right)<\liminf_{n\to\infty}s_{2^*}\left(\phi_n\right)=\liminf_{n\to\infty}\left(\frac{(2-q\gamma_q)\|\nabla\phi_n\|_2^2}{(2^*-q\gamma_q)\|\phi_n\|_{2^*}^{2^*}}\right)^{\frac{1}{2^*-2}},
\end{equation*}
which, together with Proposition~\ref{prop001} and \eqref{022}, implies that
\begin{equation*}
\frac{2^*-q\gamma_q}{2-q\gamma_q}<\liminf_{n\to\infty}\frac{\left(t_{\mu,2^*}^+\left(w_n\right)\right)^{2^*-q\gamma_q}-1}{\left(t_{\mu,2^*}^+\left(w_n\right)\right)^{2-q\gamma_q}-1}.
\end{equation*}
Since $g(t)=\frac{t^{2^*-q\gamma_q}-1}{t^{2-q\gamma_q}-1}$ is increasing in $(0, +\infty)$ and $\lim_{t\to1}g(t)=\frac{2^*-q\gamma_q}{2-q\gamma_q}$, we must have $t_{\mu,2^*}>1$ which, together with Proposition~\ref{prop001}, implies that
$\Psi_\mu(w_n)\ge\Psi_\mu\left((w_n)_{t^+_{\mu,2^*}(w_n)}\right)$ for $n$ sufficient large.  It follows that
\begin{equation*}
m^+(a,\mu)\le\Psi_\mu\left((w_n)_{t^+_{\mu,2^*}(w_n)}\right)\le\Psi_\mu(w_n)<m^+(a,\mu),
\end{equation*}
which is impossible for $n$ sufficient large.  Thus, \eqref{equ020} holds true for $\delta>0$ sufficiently small.  Now, since $\mathcal{P}_{a,\mu}^{\delta,+}$ is closed in the $H^1(\bbr^N)$ topology, $\Psi_\mu(u)$ is bounded from below by \eqref{equ020} and $\{u_n\}$ is a minimizing sequence, by Ekeland's variational principle, there exists $\{\tilde{w}_n\}\subset\mathcal{P}_{a,\mu}^{\delta,+}$ such that $\{\tilde{w}_n\}$ is a $(PS)_{m^+(a, \mu)}$ sequence of $\Psi_\mu(u)|_{\mathcal{S}_a}$ and $\tilde{w}_n-u_n\to0$ strongly in $H^1(\bbr^N)$ as $n\to\infty$.  Thus, up to a subsequence, $\tilde{w}_n\rightharpoonup u_0$ weakly in $H^1(\bbr^N)$ as $n\to\infty$.  It follows from the method of the Lagrange multiplier that there exists $\lambda_n\in\bbr$ such that
\begin{equation*}
-\Delta\tilde{w}_n-\lambda_n\tilde{w}_n-\mu|\tilde{w}_n|^{q-2}\tilde{w}_n-|\tilde{w}_n|^{2^*-2}\tilde{w}_n=o(1)
\end{equation*}
strongly in $H^{-1}(\bbr^N)$ as $n\to\infty$ where $H^{-1}(\bbr^N)$ is the dual space of $H^{1}(\bbr^N)$.  Thus, by $\{\tilde{w}_n\}\subset\mathcal{P}_{a,\mu}^{\delta,+}$, $\{u_n\}\subset \mathcal{P}_{a,\mu}^+$ and $\tilde{w}_n-u_n\to0$ strongly in $H^1(\bbr^N)$ as $n\to\infty$, we have
\begin{align*}
\lambda_n a=&\|\nabla \tilde{w}_n\|_2^2-\mu\|\tilde{w}_n\|_q^q-\|\tilde{w}_n\|_{2^*}^{2^*}\\
=&(\gamma_q-1)\mu\|u_n\|_q^q+o(1)\\
=&(\gamma_q-1)\mu\|u_0\|_q^q+o(1),
\end{align*}
which, together with $u_0\not=0$ and $\gamma_q<1$ for $2<q<2+\frac{4}{N}$, implies that $\lambda_n\to\lambda_0<0$ as $n\to\infty$ up to a subsequence.  It follows that $u_0$ is a weak solution of the following equation
\begin{equation*}
-\Delta u=\lambda_0u+\mu|u|^{q-2}u+|u|^{2^*-2}u\quad\text{in }\bbr^N.
\end{equation*}
By the standard elliptic regularity theorem, we know that $u_0$ is smooth and exponentially decays to zero at infinity.  Thus, $u_0\in\mathcal{P}_{a_1,\mu}$.  Recall that $u_n\to u_0$ strongly in $L^r(\bbr^N)$ as $n\to\infty$ for all $2<r<2^*$ and $\{u_n\}\subset \mathcal{P}_{a,\mu}^+$, thus, we must have
\begin{equation*}
\|\nabla v_n\|_2^2-\|v_n\|^{2^*}_{2^*}=o(1).
\end{equation*}
Similar to the case~$(i)$, either $\|\nabla v_n\|_2^2=\|v_n\|^{2^*}_{2^*}=o(1)$ or $\|\nabla v_n\|_2^2=\|v_n\|^{2^*}_{2^*}+o(1)\ge S^{\frac{N}{2}}+o(1)$.  Moreover, by the Fatou lemma, we also have $0<\|u_0\|_2^2=a_1\le a$.  It follows from the Brezis-Lieb lemma (cf. \cite[Lemma~1.32]{Willem1996}) and $(b)$ of Lemma~\ref{lem001} that 
\begin{align}\label{eqnWu0005}
m^+(a, \mu)+o(1)=&\frac{1}{2}\|\nabla u_n\|_2^2-\frac{1}{2^*}\|u_n\|_{2^*}^{2^*}-\frac{\mu}{q}\|u_n\|_q^q\notag\\
=&\Psi_\mu(u_0)+\frac{1}{2}\|\nabla v_n\|_2^2-\frac{1}{2^*}\|v_n\|^{2^*}_{2^*}+o(1)\notag\\
\ge&m^+(a_1, \mu)+o(1)\notag\\
\ge&m^+(a, \mu)+o(1),
\end{align}
which implies that $u_0$ is a minimizer of $m^+(a_1, \mu)$.  If $a_1<a$ then by $(b)$ of Lemma~\ref{lem001} once more, we have $m^+(a_1, \mu)>m^+(a, \mu)$ which contradicts \eqref{eqnWu0005}.  Thus, we must have $a_1=a$ and $v_n\to0$ strongly in $H^1(\bbr^N)$ as $n\to\infty$.  By denoting $u_{a,\mu,+}=u_0$ and $\lambda_{a,\mu,+}=\lambda_0$, we know that the variational problem~\eqref{eqnWu0003} is achieved by $u_{a,\mu,+}$ which, by the method of the Lagrange multiplier, also satisfies the Schr\"{o}dinger equation~\eqref{eqd106} for a suitable $\lambda_{a, \mu, +}<0$.  Since $u_{a,\mu,+}$ is real valued, nonnegative, radially symmetric and radially decreasing, by the maximum principle, we also know that $u_{a,\mu,+}$ is also positive.
\end{proof}

\vskip0.12in

\subsection{The existence of mountain-pass solutions}
In this section, we shall study the variational problem:
\begin{equation}\label{eqnWu0004}
m^-(a,\mu)=\inf_{u\in \mathcal{P}_{a, \mu}^-}\Psi_\mu(u).
\end{equation}
We begin with the proof of Proposition~\ref{003}.

\vskip0.12in

\noindent\textbf{Proof of Proposition~\ref{003}:}\quad 
Let $\{u_n\}\in \mathcal{P}_{a,\mu,p}^-$ be a minimizing sequence of $m^-_p(a,\mu)$.  As in the proof of Proposition~\ref{propWu0001}, we may assume that $\{u_n\}$ are real valued, nonnegative, radially symmetric and radially decreasing.  Since $2<q<2+\frac{4}{N}<p<2^*$, by similar estimates in the proof of \cite[Lemma~4.1]{Soave2020-1}, we know that $\{u_n\}$ is bounded in $H^1(\bbr^N)$.  Again, as in the proof of Proposition~\ref{propWu0001}, up to a subsequence, $u_n\rightharpoonup u_0$ weakly in $H^1(\bbr^N)$ as $n\to\infty$ for some $u_0\in H^1(\bbr^N)$ and by Strauss's radial lemma (cf. \cite[Lemma~A. I']{BerestyckiLions1983} or \cite[Lemma~1.24]{Willem1996}), we also have $u_n\to u_0$ strongly in $L^r(\bbr^N)$ as $n\to\infty$ for all $2<r<2^*$.  Clearly, $u_0$ is real valued, nonnegative, radially symmetric and radially decreasing.  Let $v_n=u_n-u_0$ then one of the following two cases must happen:
\begin{itemize}
    \item [$(i)$]\quad $u_0=0$.
    \item [$(ii)$]\quad $u_0\not=0$.
\end{itemize}
If the case~$(i)$ happens then by $\{u_n\}\in \mathcal{P}_{a,\mu,p}^-$, we have $\|\nabla u_n\|_2^2=o(1)$.  However, since $2<q<2+\frac{4}{N}<p<2^*$, by $\{u_n\}\in \mathcal{P}_{a,\mu,p}^-$ once more and the Gagliardo-Nirenberg inequality, we also have
\begin{equation}\label{0003}
\|\nabla u_n\|_2^2\lesssim\|u_n\|_p^p\lesssim\|\nabla u_n\|_2^{p\gamma_p},
\end{equation}
which implies that $\|\nabla u_n\|_2^2\gtrsim1$.  Thus, the case~$(i)$ can not happen.
If the case~$(ii)$ happens then by the Fatou lemma, we have $0<\|u_0\|_2^2=a_1\le a$.
Moreover, since $0<\mu<\mu^*_{a,p}$, by Propsoition~\ref{prop001}, there exists $t^+_{\mu,p}(u_n)$ and $t_{\mu,p}^-(u_0)$ such that $(u_n)_{t^+_{\mu,p}(u_n)}\in \mathcal{P}_{a,\mu,p}^+$ and $(u_0)_{t_{\mu,p}^-(u_0)}\in\mathcal{P}_{a_1,\mu,p}^-$.  Since
\begin{align*}
0=&\left(t_{\mu,p}^-(u_0)\right)^2\|\nabla u_0\|_2^2-\left(t_{\mu,p}^-(u_0)\right)^{2^*}\|u_0\|_{2^*}^{2^*}-\mu\gamma_q\left(t_{\mu,p}^-(u_0)\right)^{q\gamma_q}\|u_0\|_q^q\nonumber\\
\leq&\left(t_{\mu,p}^-(u_0)\right)^2\|\nabla u_n\|_2^2-\left(t_{\mu,p}^-(u_0)\right)^{2^*}\|u_n\|_{2^*}^{2^*}-\mu\gamma_q\left(t_{\mu,p}^-(u_0)\right)^{q\gamma_q}\|u_n\|_q^q+o(1),
\end{align*}
by Propsoition~\ref{prop001}, we know that $t^+_{\mu,p}(u_n)+o(1)\le t_{\mu,p}^-(u_0)\le1+o(1)$.  Moreover, by similar estimates of \eqref{0003} and the fact that $u_0\not=0$, we also have $t_{\mu,p}^-(u_0)\gtrsim1$.  It follows from Propsoition~\ref{prop001} once more and $(b)$ of Lemma~\ref{lem001} that
\begin{align}\label{eqnWu0006}
m_p^-(a,\mu)+o(1)=&\Psi_{\mu,p}(u_n)\notag\\
\geq&\Psi_{\mu,p}\left((u_n)_{t_{\mu,p}^-(u_0)}\right)+o(1)\notag\\
=&\Psi_{\mu,p}\left((u_0)_{t_{\mu,p}^-(u_0)}\right)+\frac{1}{2}\left(t^-_{\mu,p}(u_0)\right)^2\|\nabla v_n\|_2^2+o(1)\notag\\
\ge& m_p^-(a_1,\mu)+\frac{1}{2}\left(t^-_{\mu,p}(u_0)\right)^2\|\nabla v_n\|_2^2+o(1)\notag\\
\geq&m_p^-(a,\mu)+\frac{1}{2}\left(t^-_{\mu,p}(u_0)\right)^2\|\nabla v_n\|_2^2+o(1).
\end{align}
As that in the proof of Proposition~\ref{propWu0001}, by \eqref{eqnWu0006}, we must have that $a_1=a$ and $v_n\to0$ strongly in $H^1(\bbr^N)$ as $n\to\infty$.  By denoting $u_{a,\mu,p,-}=u_0$, we know that the variational problem~\eqref{eqnWu0003} is achieved by $u_{a,\mu,p,-}$ which, by the method of the Lagrange multiplier, also satisfies the Schr\"{o}dinger equation~\eqref{eqd106} for a suitable $\lambda_{a, \mu, p,-}<0$.  Since $u_{a,\mu, p,-}$ is real valued, nonnegative, radially symmetric and radially decreasing, by the maximum principle, we also know that $u_{a,\mu, p,-}$ is also positive.
\hfill$\Box$

\vskip0.12in

\begin{remark}\label{rmkWu0002}
The variational problem~\eqref{eqnWu0009} has also been studied in \cite{Soave2020-1} under the restriction
\begin{equation*}
\frac{\left(\mu a^{q(1-\gamma_q)}\right)^{p\gamma_p-2}\left(a^{p(1-\gamma_p)}\right)^{2-q\gamma_q}}{\left(\frac{p(2-q\gamma_q)}{2C_{N,p}^p(p\gamma_p-q\gamma_q)}\right)^{2-q\gamma_q}\left(\frac{q(p\gamma_p-2)}{2C_{N,q}^q(p\gamma_p-q\gamma_q)}\right)^{p\gamma_p-2}}<1.
\end{equation*}
However, by the Gagliardo-Nirenberg inequality and the definition of the extremal value $\mu^*_{a,p}$ given by \eqref{eqnWu0025}, we have
\begin{equation*}
\frac{\left(\mu^*_{a,p}a^{q(1-\gamma_q)}\right)^{p\gamma_p-2}\left(a^{p(1-\gamma_p)}\right)^{2-q\gamma_q}}{\left(\frac{(2-q\gamma_q)}{\gamma_pC_{N,p}^p(p\gamma_p-q\gamma_q)}\right)^{2-q\gamma_q}\left(\frac{(p\gamma_p-2)}{\gamma_qC_{N,q}^q(p\gamma_p-q\gamma_q)}\right)^{p\gamma_p-2}}\geq1.
\end{equation*}
Direct calculations show that
\begin{equation*}
\frac{\left(\frac{(2-q\gamma_q)}{\gamma_pC_{N,p}^p(p\gamma_p-q\gamma_q)}\right)^{2-q\gamma_q}\left(\frac{(p\gamma_p-2)}{\gamma_qC_{N,q}^q(p\gamma_p-q\gamma_q)}\right)^{p\gamma_p-2}}{\left(\frac{p(2-q\gamma_q)}{2C_{N,p}^p(p\gamma_p-q\gamma_q)}\right)^{2-q\gamma_q}\left(\frac{q(p\gamma_p-2)}{2C_{N,q}^q(p\gamma_p-q\gamma_q)}\right)^{p\gamma_p-2}}=\left(\frac{2}{p\gamma_p}\right)^{2-q\gamma_q}\left(\frac{2}{q\gamma_q}\right)^{p\gamma_p-2},
\end{equation*}
where 
\begin{eqnarray*}
\min_{(p,q)\in\left[2+\frac{4}{N}, 2^*\right]\times\left[2, 2+\frac{4}{N}\right]}\left(\frac{2}{p\gamma_p}\right)^{2-q\gamma_q}\left(\frac{2}{q\gamma_q}\right)^{p\gamma_p-2}=\left(\frac{2}{2^*}\right)^{2-2}\left(\frac{2}{2}\right)^{2^*-2}=1.
\end{eqnarray*}
Thus, Proposition~\ref{003} improves \cite[Theorem~1.3]{Soave2020-1}.
\end{remark}

\vskip0.12in

We also need the following property in the construction of good minimizing sequence of the variational problem~\eqref{eqnWu0004}.
\begin{lemma}\label{lem002}
Let $N\geq3$, $2<q<2+\frac{4}{N}$, $a>0$ and $\mu\in\left(0,\mu^*_{a}\right)$.  Then
\begin{itemize}
    \item [$(a)$]\quad $m^-(a,\mu)<m^+(a,\mu)+\frac{1}{N}S^{\frac{N}{2}}$.
    \item [$(b)$]\quad For any sequence $\{p_n\}$ satisfying $p_n\uparrow2^*$ as $n\to\infty$, we have 
    \begin{eqnarray}\label{eqnWu0007}
    \limsup_{n\to +\infty}m^-_{p_n}(a,\mu)\le m^-(a,\mu).
    \end{eqnarray}
\end{itemize}
\end{lemma}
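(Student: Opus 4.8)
The plan is to prove the two parts separately, since they require different mechanisms.

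For part $(a)$, the strategy is the classical Brezis--Nirenberg-type energy comparison. I would fix the ground-state minimizer $u_{a,\mu,+}$ obtained in Proposition~\ref{propWu0001}, and build a test family by gluing to it a suitably rescaled and truncated Talenti bubble $U_\ve$, the extremal for the Sobolev inequality, concentrated at a point far from the essential support of $u_{a,\mu,+}$ (or concentrated at the origin after a cut-off). The mass constraint forces a normalization, so I would renormalize the glued function back onto $\mathcal{S}_a$; because the added bubble carries $L^2$-mass of a lower order than its gradient and $L^{2^*}$ energy (for $N\ge 3$ one checks $\|U_\ve\|_2^2 = o(1)$ relative to the $S^{N/2}$-scale, with the usual logarithmic care in low dimensions), this renormalization is a harmless $1+o(1)$ factor. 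One then evaluates $\Psi_\mu$ along the fibering map $\Phi_{\mu,2^*,w_\ve}(s)$ at its local maximum point $t^-_{\mu,2^*}(w_\ve)$, so that the resulting function lies in $\mathcal{P}_{a,\mu}^-$ and provides an admissible competitor for $m^-(a,\mu)$. The key computation is that the maximal energy gain contributed by the bubble is exactly $\frac1N S^{N/2}$ plus strictly negative cross terms coming from the interaction with $u_{a,\mu,+}$ and from the subcritical term $\mu\|u\|_q^q$; these cross terms are what pushes the estimate strictly below $m^+(a,\mu)+\frac1N S^{N/2}$. The main obstacle here is dimension-dependent: the interaction/correction estimates degrade as $N$ decreases, so the delicate bookkeeping of the $\ve$-orders (and in particular ruling out that the negative contribution is swamped) is where the real work lies; the paper itself signals this difficulty by noting that the related extremal variational problem is only solvable for $N\ge 7$, so I expect this energy estimate to be the genuinely hard step.

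For part $(b)$, the plan is a direct approximation argument using the extremal $u_0\in\mathcal{S}_a$ that achieves $m^-(a,\mu)$ via its fibered lift. Take any competitor $u\in\mathcal{P}_{a,\mu}^-$ with $\Psi_\mu(u)$ close to $m^-(a,\mu)$; since $u\in\mathcal{S}_a\subset H^1(\bbr^N)$ we have $\|u\|_{p_n}\to\|u\|_{2^*}$ as $p_n\uparrow 2^*$ (for radially decreasing $u$ with enough integrability this is the dominated convergence / absolute continuity argument already used in Proposition~\ref{propmup}$(d)$), and likewise the coefficients $\gamma_{p_n}\to\gamma_{2^*}$. By Proposition~\ref{prop001}, provided $\mu<\mu^*_{a,p_n}$ for $n$ large—which holds because $\mu<\mu^*_a=\lim_{p\uparrow2^*}\mu^*_{a,p}$ by Proposition~\ref{propmup}$(d)$—there is a fibering maximum point $t^-_{\mu,p_n}(u)$ giving $(u)_{t^-_{\mu,p_n}(u)}\in\mathcal{P}_{a,\mu,p_n}^-$, an admissible competitor for $m^-_{p_n}(a,\mu)$. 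Since $t^-_{\mu,p_n}(u)\to t^-_{\mu,2^*}(u)$ as the fibering data converge continuously, one gets
\begin{equation*}
m^-_{p_n}(a,\mu)\le \Psi_{\mu,p_n}\left((u)_{t^-_{\mu,p_n}(u)}\right)=\Psi_\mu\left((u)_{t^-_{\mu,2^*}(u)}\right)+o(1)\le \Psi_\mu(u)+o(1).
\end{equation*}
Taking $\limsup_{n\to\infty}$ and then letting $\Psi_\mu(u)\downarrow m^-(a,\mu)$ over the competitors yields \eqref{eqnWu0007}. The only point requiring care is the uniform control needed to pass the fibering-point convergence and the $L^{p_n}\to L^{2^*}$ convergence simultaneously; this is routine once one fixes a single competitor $u$ first and then varies $n$, rather than letting $u$ and $n$ tend together.
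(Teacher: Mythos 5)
Your proposal is correct and follows essentially the same route as the paper: for $(a)$ the paper likewise glues a truncated Aubin--Talenti bubble onto the ground state $u_{a,\mu,+}$, renormalizes the mass back onto $\mathcal{S}_a$, and bounds $m^-(a,\mu)$ by $\sup_{t\ge0}\Psi_\mu(\overline{W}_{\ve,t})$, deferring the interaction estimates to \cite[Lemma~3.1 and Remark~3.1]{WeiWu2022}; for $(b)$ it fixes a competitor $u\in\mathcal{P}_{a,\mu}^-$, projects it onto $\mathcal{P}_{a,\mu,p_n}^-$ via $t^-_{\mu,p_n}(u)$, and shows $t^-_{\mu,p_n}(u)\to1$ using the nondegeneracy coming from $\mu<\mu^*_a$ (Proposition~\ref{prop001}), exactly as you describe. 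The only caution is that in $(a)$ the bubble must be concentrated where $u_{a,\mu,+}$ is positive (the origin), since the negative interaction term is what delivers the strict inequality; your primary variant is the right one.
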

\begin{proof}
$(a)$\quad  As in the proof of \cite[Lemma~3.1]{WeiWu2022}, we define $\widehat{W}_{\ve,t}=u_{a,\mu,+}+tW_\ve$ and $\overline{W}_{\ve,t}=s^{\frac{N-2}{2}}\widehat{W}_{\ve,t}(sx)$ where 
$W_\ve =\chi (x) U_\epsilon$, with $ \chi (x)$ being a suitable cut-off function around $0$ and $U_\epsilon$ being the standard Aubin-Talenti bubble given by
\begin{eqnarray*}
U_\ve(x)=[N(N-2)]^{\frac{N-2}{4}}\bigg(\frac{\ve}{\ve^2+|x|^2}\bigg)^{\frac{N-2}{2}},
\end{eqnarray*}
and $s=\frac{\|\widehat{W}_{\ve,t}\|_2}{\sqrt{a}}$.  Then $\overline{W}_{\ve,t}\in\mathcal{S}_a$ for all $\ve,t>0$.  By Proposition~\ref{prop001}, there exist $\tau_{\ve,t}(\overline{W}_{\ve,t})>0$ such that $(\overline{W}_{\ve,t})_{\tau_{\ve,t}(\overline{W}_{\ve,t})}\in\mathcal{P}^-_{a,\mu}$ and as in the proof of \cite[Lemma~3.1]{WeiWu2022} again, we can prove that there exists $t_\ve>0$ such that $\tau_{\ve, t_\ve}(\overline{W}_{\ve,t_\ve})=1$.  It follows that 
\begin{eqnarray*}
m^-\left(a,\mu\right)\leq\sup_{t\geq0}\Psi_\mu(\overline{W}_{\ve,t}).
\end{eqnarray*}
The rest of the proof is to estimate $\sup_{t\geq0}\Psi_\mu(\overline{W}_{\ve,t})$ which is the same as that of \cite[Lemma~3.1 and Remark~3.1]{WeiWu2022} with trivial modifications, so we omit it here.

\vskip0.06in

$(b)$\quad Let $\{p_n\}$ be a sequence satisfying $p_n\uparrow2^*$ as $n\to\infty$.  For any fixed $u\in\mathcal{P}_{a,\mu}^-$, by Proposition~\ref{prop001} and $(d)$ of Proposition~\ref{propmup}, for $n$ sufficiently large, there exists $t_{\mu,p_n}^{-}(u)$ such that $\left(u\right)_{t_{\mu,p_n}^{-}(u)}\in\mathcal{P}_{a,\mu,p_n}^-$.  Since $2<q<2+\frac{4}{N}$ and $p_n\uparrow2^*>2+\frac{4}{N}$ as $n\to\infty$, by the dominated convergence theorem, it is easy to see that
\begin{equation*}
\left(\frac{\mu\gamma_q\|u\|_q^q}{\|\nabla u\|_2^2}\right)^\frac{1}{2-q\gamma_q}<t_{\mu,p_n}^{-}(u)<\left(\frac{\|\nabla u\|_2^2}{\gamma_{p_n}\|u\|_{p_n}^{p_n}}\right)^\frac{1}{p_n\gamma_{p_n}-2}=\left(\frac{\|\nabla u\|_2^2}{\|u\|_{2^*}^{2^*}}\right)^\frac{1}{2^*-2}+o(1).
\end{equation*}
Thus, up to a subsequence, we may assume that $t_{\mu,p_n}^{-}(u)\to t_{\mu,2^*}(u)$ which satisfies 
\begin{equation*}
\left\{\array{ll}
\left(t_{\mu,2^*}(u)\right)^2\|\nabla u\|_2^2-\left(t_{\mu,2^*}(u)\right)^{2^*}\|u\|_{2^*}^{2^*}-\mu\gamma_q\left(t_{\mu,2^*}(u)\right)^{q\gamma_q}\|u\|_q^q=0,
\\
2\left(t_{\mu,2^*}(u)\right)^2\|\nabla u\|_2^2-2^*\left(t_{\mu,2^*}(u)\right)^{2^*}\|u\|_{2^*}^{2^*}+\mu q\gamma_q^2\left(t_{\mu,2^*}(u)\right)^{q\gamma_q}\|u\|_q^q\le0.\endarray\right.
\end{equation*}
Since $\mu\in\left(0,\mu^*_{a}\right)$, by Proposition~\ref{prop001}, we must have $t_{\mu,2^*}(u)=1$.  It follows that
\begin{equation*}
m^-(a,p_n)\le \Psi_{\mu,p_n}\left(\left(u\right)_{t_{\mu,p_n}^{-}(u)}\right)=\Psi_{\mu}\left(u\right)+o(1).
\end{equation*}
Since $u\in\mathcal{P}_{a,\mu}^-$ is arbitrary, \eqref{eqnWu0007} holds true.
\end{proof}

\vskip0.12in

With Proposition~\ref{003} and Lemma~\ref{lem002} in hands, we can prove the following.
\begin{proposition}\label{propWu0002}
Let $N\ge 3$, $2<q<2+\frac{4}{N}$, $a>0$ and $0<\mu<\mu^*_{a}$.  Then the variational problem~\eqref{eqnWu0004} is achieved by some $u_{a,\mu,-}$, which is real valued, positive, radially symmetric and radially decreasing.
Moreover, $u_{a,\mu,-}$ also satisfies the Schr\"{o}dinger equation~\eqref{eqd106} for a suitable Lagrange multiplier $\lambda=\lambda_{a, \mu,-}<0$.
\end{proposition}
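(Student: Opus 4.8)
The plan is to realize the critical level $m^-(a,\mu)$ as a limit of the subcritical mountain--pass levels $m^-_{p}(a,\mu)$ as $p\uparrow 2^*$, using the corresponding minimizers as a distinguished minimizing sequence. This circumvents the difficulty noted in the introduction, namely that near $\mu^*_a$ the set $\mathcal{P}^-_{a,\mu}$ consists only of \emph{local} maxima of the fibering maps, so the usual direct method on $\mathcal{P}^-_{a,\mu}$ need not produce a compact minimizing sequence. Concretely, I fix a sequence $p_n\uparrow 2^*$; by $(d)$ of Proposition~\ref{propmup} we have $\mu^*_{a,p_n}\to\mu^*_a>\mu$, so for all large $n$ we have $0<\mu<\mu^*_{a,p_n}$ and Proposition~\ref{003} furnishes minimizers $u_n:=u_{a,\mu,p_n,-}\in\mathcal{P}^-_{a,\mu,p_n}$ that are real valued, positive, radially symmetric and radially decreasing and solve \eqref{eqnWu0008} with $\lambda_n:=\lambda_{a,\mu,p_n,-}<0$. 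By $(b)$ of Lemma~\ref{lem002}, $\limsup_n m^-_{p_n}(a,\mu)\le m^-(a,\mu)$, so $\{u_n\}$ is an asymptotically minimizing family for \eqref{eqnWu0004}.

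First I would establish a priori bounds. Writing the energy on the manifold as $\Psi_{\mu,p_n}(u_n)=(\frac12-\frac1{p_n\gamma_{p_n}})\|\nabla u_n\|_2^2-\mu(\frac1q-\frac{\gamma_q}{p_n\gamma_{p_n}})\|u_n\|_q^q$, both coefficients having the right sign because $p_n\gamma_{p_n}>2$ and $p_n\gamma_{p_n}>q\gamma_q$, together with the Gagliardo--Nirenberg inequality (recall $q\gamma_q<2$) and the upper bound on $m^-_{p_n}(a,\mu)$, yields boundedness of $\{u_n\}$ in $H^1(\bbr^N)$. Passing to a subsequence, $u_n\rightharpoonup u_0$ weakly in $H^1(\bbr^N)$ and, by Strauss's radial lemma, $u_n\to u_0$ strongly in $L^r(\bbr^N)$ for every $2<r<2^*$, with $u_0$ real valued, nonnegative, radially symmetric and radially decreasing. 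From the equation tested with $u_n$ and the Pohozaev/manifold relation one computes $\lambda_n a=(\gamma_{p_n}-1)\|u_n\|_{p_n}^{p_n}+\mu(\gamma_q-1)\|u_n\|_q^q$, which is bounded and nonpositive, so $\lambda_n\to\lambda_0\le 0$ up to a subsequence.

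The next step is to show $u_0\ne 0$ and $\lambda_0<0$. If $u_0=0$ then $\|u_n\|_q^q\to0$, and the manifold relation forces $\|\nabla u_n\|_2^2\gtrsim 1$; feeding $\|u_n\|_{p_n}^{p_n}=\frac1{\gamma_{p_n}}\|\nabla u_n\|_2^2+o(1)$ into the Sobolev inequality gives $\|\nabla u_n\|_2^2\ge S^{\frac N2}+o(1)$, whence $m^-_{p_n}(a,\mu)=\frac1N\|\nabla u_n\|_2^2+o(1)\ge\frac1N S^{\frac N2}+o(1)$. But $(a)$ of Lemma~\ref{lem002} together with $m^+(a,\mu)<0$ gives $m^-(a,\mu)<\frac1N S^{\frac N2}$, contradicting $\limsup_n m^-_{p_n}(a,\mu)\le m^-(a,\mu)$. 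Thus $u_0\ne0$, and then the $\|u_0\|_q^q$--term in the formula for $\lambda_n a$ forces $\lambda_0<0$. Passing to the limit in the equation (with $|u_n|^{p_n-2}u_n\to|u_0|^{2^*-2}u_0$), $u_0$ solves \eqref{eqd106} with $\lambda_0<0$, so $u_0\in\mathcal{P}_{b,\mu}$ with $b:=\|u_0\|_2^2\in(0,a]$.

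The crux, and the step I expect to be the main obstacle, is the strong convergence $u_n\to u_0$ in $H^1(\bbr^N)$, where both loss of mass ($b<a$) and concentration in the $\dot H^1/L^{2^*}$ norm must be excluded; this is exactly where criticality bites. Setting $v_n=u_n-u_0$, the Brezis--Lieb lemma (in the form adapted to the varying exponents $p_n\uparrow 2^*$, itself a technical point to be checked) together with the tested equations yields $\|\nabla v_n\|_2^2-\|v_n\|_{2^*}^{2^*}=\lambda_0(a-b)+o(1)$ and the splitting $m^-_{p_n}(a,\mu)=\Psi_\mu(u_0)+\frac1N\|\nabla v_n\|_2^2+\frac{\lambda_0(a-b)}{2^*}+o(1)$. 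Because $\lambda_0<0$, the radial solutions $u_n$ enjoy a uniform exponential decay (via Strauss's lemma and the comparison principle), so $\{u_n\}$ is $L^2$--tight and no mass is lost, i.e.\ $b=a$; alternatively one can reach $b=a$ through the strict monotonicity in $(b)$ of Lemma~\ref{lem001}. With $b=a$ the first identity reduces to $\|\nabla v_n\|_2^2-\|v_n\|_{2^*}^{2^*}=o(1)$, so either $\|\nabla v_n\|_2\to0$, giving strong convergence, or $\|\nabla v_n\|_2^2\ge S^{\frac N2}+o(1)$. In the latter case the splitting gives $\lim_n m^-_{p_n}(a,\mu)=\Psi_\mu(u_0)+\frac1N\lim_n\|\nabla v_n\|_2^2\ge m^+(a,\mu)+\frac1N S^{\frac N2}$, since $\Psi_\mu(u_0)\ge\inf_{\mathcal{P}_{a,\mu}}\Psi_\mu=m^+(a,\mu)$; this contradicts $m^-(a,\mu)<m^+(a,\mu)+\frac1N S^{\frac N2}$ from $(a)$ of Lemma~\ref{lem002}. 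Hence $u_n\to u_0$ strongly in $H^1(\bbr^N)$, so $u_0\in\mathcal{S}_a$. Finally, passing to the limit in the defining inequality of $\mathcal{P}^-_{a,\mu,p_n}$ places $u_0$ in $\mathcal{P}^-_{a,\mu}\cup\mathcal{P}^0_{a,\mu}$, and since $\mathcal{P}^0_{a,\mu}=\emptyset$ for $\mu<\mu^*_a$ (Proposition~\ref{prop001} and $(c)$ of Proposition~\ref{propmup}) we conclude $u_0\in\mathcal{P}^-_{a,\mu}$; thus $m^-(a,\mu)\le\Psi_\mu(u_0)=\lim_n m^-_{p_n}(a,\mu)\le m^-(a,\mu)$, so $u_0=u_{a,\mu,-}$ achieves \eqref{eqnWu0004}. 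Positivity then follows from the maximum principle and smoothness from elliptic regularity.
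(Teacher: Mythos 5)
Your proposal follows essentially the same route as the paper's proof: take the subcritical minimizers $u_{a,\mu,p_n,-}$ of Proposition~\ref{003} as a distinguished minimizing family via $(b)$ of Lemma~\ref{lem002}, compute the Lagrange multipliers from \eqref{eqnWu0010}, exclude vanishing and concentration through the strict bound $m^-(a,\mu)<m^+(a,\mu)+\frac{1}{N}S^{\frac{N}{2}}$ together with the monotonicity of $m^{\pm}$ in $a$, and identify the limit as an element of $\mathcal{P}^-_{a,\mu}$ using $\mathcal{P}^0_{a,\mu}=\emptyset$. The one step that does not survive as literally written is the pair of exact identities $\|\nabla v_n\|_2^2-\|v_n\|_{2^*}^{2^*}=\lambda_0(a-b)+o(1)$ and $m^-_{p_n}(a,\mu)=\Psi_\mu(u_0)+\frac{1}{N}\|\nabla v_n\|_2^2+\frac{\lambda_0(a-b)}{2^*}+o(1)$: both presuppose a Brezis--Lieb \emph{equality} $\|u_n\|_{p_n}^{p_n}=\|v_n\|_{2^*}^{2^*}+\|u_0\|_{2^*}^{2^*}+o(1)$ with varying exponent, which is false in general (a bubble at scale $\varepsilon_n$ contributes $\varepsilon_n^{(N-2)(2^*-p_n)/2}\|U\|_{p_n}^{p_n}$ to $\|v_n\|_{p_n}^{p_n}$, and this can converge to any value between $0$ and the critical limit depending on the relative rates of $\varepsilon_n\to0$ and $p_n\uparrow 2^*$). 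Only the one-sided bound $\|u_n\|_{p_n}^{p_n}\le\|u_n\|_2^2+\|u_n\|_{2^*}^{2^*}$ is available, and that is precisely how the paper proceeds: it obtains $\|\nabla v_n\|_2^2\le\|v_n\|_{2^*}^{2^*}+o(1)$ from the Pohozaev identity of $u_0$ plus H\"older, and gets the energy splitting by first eliminating the $\|\cdot\|_{p_n}^{p_n}$-term through the constraint, writing $\Psi_{\mu,p_n}(u_n)=\left(\frac{1}{2}-\frac{1}{2^*}\right)\|\nabla u_n\|_2^2-\mu\gamma_q\left(\frac{1}{q\gamma_q}-\frac{1}{2^*}\right)\|u_n\|_q^q+o(1)$, to which the ordinary Brezis--Lieb lemma applies. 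With that substitution your dichotomy and contradiction arguments go through; the uniform-exponential-decay detour for $b=a$ is then unnecessary, since the monotonicity alternative you mention is the one the paper actually uses.
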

\begin{proof}
Let $\{p_n\}$ be a sequence satisfying $p_n\uparrow2^*$ as $n\to\infty$.  Then by Proposition~\ref{003} and $(d)$ of Proposition~\ref{propmup}, for $n$ sufficiently large, the variational problem~\eqref{eqnWu0009} is achieved by some $u_{a,\mu,p_n,-}$ which is also a solution of \eqref{eqnWu0008} for a suitable Lagrange multiplier $\lambda_{a, \mu,p_n,-}<0$.  By $(b)$ of Lemma~\ref{lem002} and similar estimates in the proof of \cite[Lemma~3.2]{Soave2020-2}, we know that $\{u_{a,\mu,p_n,-}\}$ is bounded in $H^1(\bbr^N)$.  Thus, up to a subsequence, $u_{a,\mu,p_n,-}\rightharpoonup u_{a,\mu,-}$ weakly in $H^1(\bbr^N)$ as $n\to\infty$ for some $u_{a,\mu,-}\in H^1(\bbr^N)$ and by Strauss's radial lemma (cf. \cite[Lemma~A. I']{BerestyckiLions1983} or \cite[Lemma~1.24]{Willem1996}), we also have $u_{a,\mu,p_n,-}\to u_{a,\mu,-}$ strongly in $L^r(\bbr^N)$ as $n\to\infty$ for all $2<r<2^*$.  Clearly, $u_{a,\mu,-}$ is real valued, nonnegative, radially symmetric and radially decreasing.  Since $u_{a,\mu,p_n,-}$ is a solution of \eqref{eqnWu0008} for a suitable Lagrange multiplier $\lambda_{a, \mu,p_n,-}<0$ and $u_{a,\mu,p_n,-}\in \mathcal{P}_{a,\mu,p_n}^-$, we must have
\begin{equation}\label{eqnWu0010}
\lambda_{a, \mu,p_n,-}a=\mu(\gamma_q-1)\|u_{a,\mu,p_n,-}\|_q^q+(\gamma_{p_n}-1)\|u_{a,\mu,p_n,-}\|_{p_n}^{p_n},
\end{equation}
which, together with the boundedness of $\{u_{a,\mu,p_n,-}\}$ in $H^1(\bbr^N)$ and the H\"older inequality, implies that $\{\lambda_{a, \mu,p_n,-}\}$ is bounded.  Thus, we may assume that 
\begin{eqnarray*}
\lim_{n\to\infty}\lambda_{a, \mu,p_n,-}=\lambda_{a, \mu,-}\leq0.
\end{eqnarray*}
Since $\gamma_{p_n}\to1$ as $p_n\to2^*$, by \eqref{eqnWu0010}, we also have
\begin{equation*}
\lambda_{a, \mu,-} a=\mu(\gamma_q-1)\|u_{a,\mu,-}\|_q^q.
\end{equation*}
It follows that $\lambda_{a, \mu,-}=0$ if and only if $u_{a,\mu,-}=0$.  Moreover, by the dominated convergence theorem, we also know that $u_{a,\mu,-}$ satisfies
\begin{equation*}
-\Delta u_{a,\mu,-}=\lambda_{a, \mu,-}u_{a,\mu,-}+\mu|u_{a,\mu,-}|^{q-2}u_{a,\mu,-}+|u_{a,\mu,-}|^{2^*-2}u_{a,\mu,-}\quad\text{in }\bbr^N
\end{equation*}
in the weak sense.  Since $u_{a,\mu,-}\in H^1(\bbr^N)$, by the standard elliptic regularity theorem, we have $u_{a,\mu,-}\in L^{\infty}_{loc}(\bbr^N)$.  Thus, by \cite[Proposition~1]{BerestyckiLions1983}, $u_{a,\mu,-}$ satisfies the Pohozaev identity, that is,
\begin{eqnarray*}
\|\nabla u_{a,\mu,-}\|_2^2-\|u_{a,\mu,-}\|_{2^*}^{2^*}-\mu\gamma_q\|u_{a,\mu,-}\|_q^q=0,
\end{eqnarray*}
which, together with $u_{a,\mu,p_n,-}\in \mathcal{P}_{a,\mu,p_n}^-$ and the H\"older inequality, implies that
\begin{equation}\label{010}
\|\nabla v_n\|_2^2\le\|v_n\|_{2^*}^{2^*}+o(1),
\end{equation}
where $v_n=u_{a,\mu,p_n,-}-u_{a,\mu,-}$.  As in the proof of Proposition~\ref{propWu0001}, by \eqref{010}, either $\|v_n\|^{2^*}_{2^*}=\|\nabla v_n\|_2^2=o(1)$ or $\|v_n\|^{2^*}_{2^*}\geq\|\nabla v_n\|_2^2+o(1)\ge S^{\frac{N}{2}}+o(1)$.
Note that by the Fatou lemma, we also have $\|u_{a,\mu,-}\|_2^2=a_1\le a$.  As in the proof of Proposition~\ref{propWu0001}, one of the following two cases must happen:
\begin{itemize}
    \item [$(i)$]\quad $u_{a,\mu,-}=0$.
    \item [$(ii)$]\quad $u_{a,\mu,-}\not=0$.
\end{itemize}
If the case~$(i)$ happens then either 
\begin{eqnarray}\label{eqnWu0011}
\|u_{a,\mu,p_n,-}\|^{2^*}_{2^*}=\|\nabla u_{a,\mu,p_n,-}\|_2^2=o(1)
\end{eqnarray}
or 
\begin{eqnarray}\label{eqnWu0012}
\|u_{a,\mu,p_n,-}\|^{2^*}_{2^*}\geq\|\nabla u_{a,\mu,p_n,-}\|_2^2+o(1)\ge S^{\frac{N}{2}}+o(1).
\end{eqnarray}
Since $u_{a,\mu,p_n,-}\in \mathcal{P}_{a,\mu,p_n}^-$, we have
\begin{equation*}
\left\{\array{ll}\|\nabla u_{a,\mu,p_n,-}\|_2^2-\gamma_{p_n}\|u_{a,\mu,p_n,-}\|_{p_n}^{p_n}-\mu\gamma_q\|u_{a,\mu,p_n,-}\|_q^q=0,
\\
2\|\nabla u_{a,\mu,p_n,-}\|_2^2-p_n\gamma_{p_n}^2\|u_{a,\mu,p_n,-}\|_{p_n}^{p_n}+\mu q\gamma_q^2\|u_{a,\mu,p_n,-}\|_q^q<0,\endarray\right.
\end{equation*}
which, together with the Gagliardo-Nirenberg inequality and \eqref{eqnWu0011}--\eqref{eqnWu0012}, implies that
\begin{eqnarray*}
\|u_{a,\mu,p_n,-}\|^{2^*}_{2^*}\geq\|\nabla u_{a,\mu,p_n,-}\|_2^2+o(1)\ge S^{\frac{N}{2}}+o(1).
\end{eqnarray*}
It follows from Lemma~\ref{lem002} that
\begin{align*}
m^-(a,\mu)\ge&\limsup_{n\to +\infty}m_{p_n}^-(a,\mu)\\
=&\limsup_{n\to+\infty}\Psi_{\mu,p_n}(u_{a,\mu,p_n,-})\\
=&\limsup_{n\to+\infty}\left(\left(\frac{1}{2}-\frac{1}{2^*}\right)\|\nabla u_{a,\mu,p_n,-}\|_2^2-\mu\gamma_q\left(\frac{1}{q\gamma_q}-\frac{1}{2^*}\right)\|u_{a,\mu,p_n,-}\|_q^q\right)\\
\ge&\frac{1}{N}S^\frac{N}{2},
\end{align*}
which contradicts Lemmas~\ref{lem001} and \ref{lem002}.  Thus, the case~$(i)$ can not happen.  If the case~$(ii)$ happens then $0<a_1\leq a$ and $\lambda_{a, \mu,-}<0$.  Now, by the Brezis-Lieb lemma (cf. \cite[Lemma~1.32]{Willem1996}) and Lemmas~\ref{lem001} and \ref{lem002},
\begin{align*}
m^-(a,\mu)\ge&\limsup_{n\to +\infty}m_{p_n}^-(a,\mu)\\
=&\limsup_{n\to+\infty}\Psi_{\mu,p_n}(u_{a,\mu,p_n,-})\\
=&\limsup_{n\to+\infty}\left(\left(\frac{1}{2}-\frac{1}{2^*}\right)\|\nabla u_{a,\mu,p_n,-}\|_2^2-\mu\gamma_q\left(\frac{1}{q\gamma_q}-\frac{1}{2^*}\right)\|u_{a,\mu,p_n,-}\|_q^q\right)\\
=&\Psi_{\mu}(u_{a,\mu,-})+\left(\frac{1}{2}-\frac{1}{2^*}\right)\limsup_{n\to\infty}\|\nabla v_n\|_2^2+o(1)\\
\ge&m^+(a_1,\mu)+\frac{1}{N}S^\frac{N}{2}\chi_{\|\nabla v_n\|_2^2\ge S^{\frac{N}{2}}+o(1)}\\
\ge&m^+(a,\mu)+\frac{1}{N}S^\frac{N}{2}\chi_{\|\nabla v_n\|_2^2\ge S^{\frac{N}{2}}+o(1)},
\end{align*} 
which contradicts Lemma~\ref{lem001} unless $a_1=a$ and $v_n\to0$ strongly in $H^1(\bbr^N)$ as $n\to\infty$.  Here, 
\begin{eqnarray*}
\chi_{\|\nabla v_n\|_2^2\ge S^{\frac{N}{2}}+o(1)}
=\left\{\aligned
&1,\quad \|\nabla v_n\|_2^2\ge S^{\frac{N}{2}}+o(1),\\
&0,\quad \|\nabla v_n\|_2^2<S^{\frac{N}{2}}+o(1).
\endaligned\right.
\end{eqnarray*}
It follows from the method of the Lagrange multiplier that $u_{a,\mu,-}$ is a solution of the Schr\"{o}dinger equation~\eqref{eqd106} for the Lagrange multiplier $\lambda_{a, \mu, -}<0$.  Since $u_{a,\mu, -}$ is real valued, nonnegative, radially symmetric and radially decreasing, by the maximum principle, we also know that $u_{a,\mu, -}$ is also positive.
\end{proof}

\vskip0.12in

\section{The existence of solutions of $(\ref{eqd106})$ for $\mu=\mu^*_a$}
In the case~$\mu=\mu^*_a$, the degenerate submanifold $\mathcal{P}^0_{a,\mu}$ may not be an emptyset by Proposition~\ref{prop001}.  Thus, to establish the existence of solutions of $(\ref{eqd106})$ for $\mu=\mu^*_a$, we need to get a deeper well understood of the $L^2$-Pohozaev manifold $\mathcal{P}_{a,\mu}$ for $\mu=\mu^*_a$.

\vskip0.12in

\subsection{The structure of $\mathcal{P}_{a,\mu^*_a}$}
We begin with the following property.
\begin{proposition}\label{prop002}
Let $N\ge 3$, $2<q<2+\frac{4}{N}$, $a>0$ and $\mu=\mu^*_{a}$.  Then 
\begin{eqnarray}\label{eqnWu0013}
\mathcal{P}^0_{a,\mu^*_a}=\left\{u\in \mathcal{P}_{a,\mu^*_a}\mid\mu(u)=\mu^*_a\right\}.
\end{eqnarray}
Moreover, any $u\in \mathcal{P}^0_{a,\mu^*_a}$ also satisfies the following equation:
\begin{equation}\label{eqnWu0014}
-2\Delta u-\mu^*_aq\gamma_q|u|^{q-2}u-2^*|u|^{2^*-2}u=\lambda u\quad\text{in }\bbr^N,
\end{equation}
where $\lambda\in\bbr$ is the Lagrange multiplier.
\end{proposition}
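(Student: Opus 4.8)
The plan is to read the set identity \eqref{eqnWu0013} off the classification in Proposition~\ref{prop001} (applied with $p=2^*$, so that $\gamma_{2^*}=1$), and then to derive the equation \eqref{eqnWu0014} by recognizing each $u\in\mathcal{P}^0_{a,\mu^*_a}$ as a constrained minimizer of the $0$-homogeneous functional $\mu(\cdot)$ on $\mathcal{S}_a$ and computing its Lagrange multiplier equation.

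For the identity, I would fix $u\in\mathcal{S}_a$ and recall from Proposition~\ref{prop001} that the fibering map $\Phi_{\mu^*_a,2^*,u}$ possesses a degenerate critical point precisely when $\mu^*_a=\mu(u)$, in which case that critical point is unique (case~$(b)$). Hence, if $u\in\mathcal{P}^0_{a,\mu^*_a}$ then $s=1$ is a degenerate critical point of $\Phi_{\mu^*_a,2^*,u}$, which forces $\mu(u)=\mu^*_a$; conversely, if $u\in\mathcal{P}_{a,\mu^*_a}$ and $\mu(u)=\mu^*_a$, then $\Phi_{\mu^*_a,2^*,u}$ has a unique, degenerate critical point, and since $s=1$ is already a critical point it must coincide with this degenerate one, so $u\in\mathcal{P}^0_{a,\mu^*_a}$. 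This gives both inclusions in \eqref{eqnWu0013}.

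For \eqref{eqnWu0014}, the point of \eqref{eqnWu0013} is that any $u\in\mathcal{P}^0_{a,\mu^*_a}$ satisfies $\mu(u)=\mu^*_a=\inf_{v\in\mathcal{S}_a}\mu(v)$ and therefore minimizes $\mu(\cdot)$ over $\mathcal{S}_a$. Since $\|u\|_2^2=a>0$ forces $u\neq0$ and hence $\|\nabla u\|_2,\|u\|_q,\|u\|_{2^*}>0$, the functional $\mu$ is $C^1$ near $u$ and $\mathcal{S}_a$ is a smooth constraint manifold, so the Lagrange multiplier rule supplies $\Lambda\in\bbr$ with $\mu'(u)=2\Lambda u$ in $H^{-1}(\bbr^N)$. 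Abbreviating $A=\|\nabla u\|_2^2$, $B=\|u\|_q^q$, $C=\|u\|_{2^*}^{2^*}$, $\alpha=\frac{2^*-q\gamma_q}{2^*-2}$ and $\beta=\frac{2-q\gamma_q}{2^*-2}$, a direct differentiation of $\mu(v)=\tilde{C}_{N,q}(\|\nabla v\|_2^2)^{\alpha}(\|v\|_q^q)^{-1}(\|v\|_{2^*}^{2^*})^{-\beta}$ and the use of $\mu(u)=\mu^*_a$ yield
\[
\mu'(u)=\mu^*_a\left(\frac{2\alpha}{A}(-\Delta u)-\frac{q}{B}|u|^{q-2}u-\frac{2^*\beta}{C}|u|^{2^*-2}u\right).
\]

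The last step, which I expect to be the only delicate (though elementary) part, is to collapse the three distinct coefficients above into a common factor using the two algebraic relations carried by $u\in\mathcal{P}^0_{a,\mu^*_a}$: the Pohozaev identity $A=\mu^*_a\gamma_q B+C$ and the degeneracy relation $2A=\mu^*_aq\gamma_q^2 B+2^*C$. Eliminating $C$ between them gives $(2^*-2)A=\mu^*_a\gamma_q(2^*-q\gamma_q)B$ and consequently $C=\frac{2-q\gamma_q}{2^*-q\gamma_q}A$; substituting these shows the two equalities $\frac{\alpha}{A}=\frac{1}{\mu^*_a\gamma_q B}=\frac{\beta}{C}$, so that
\[
\mu'(u)=\mu^*_a\frac{\alpha}{A}\left(-2\Delta u-\mu^*_aq\gamma_q|u|^{q-2}u-2^*|u|^{2^*-2}u\right).
\]
Comparing with $\mu'(u)=2\Lambda u$ then produces \eqref{eqnWu0014} with $\lambda=\frac{2\Lambda A}{\mu^*_a\alpha}\in\bbr$. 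Care is needed to verify both coefficient identities rather than only one, since it is exactly their simultaneous validity that renders the Lagrange equation of $\mu$ proportional to the Pohozaev-type operator appearing in \eqref{eqnWu0014}.
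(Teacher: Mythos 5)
Your proposal is correct and follows essentially the same route as the paper: the identity \eqref{eqnWu0013} is read off the classification in Proposition~\ref{prop001}, and \eqref{eqnWu0014} is obtained as the Lagrange multiplier equation of the minimizer of $\mu(\cdot)$ on $\mathcal{S}_a$. The only difference is that you carry out in full the coefficient computation (using the Pohozaev and degeneracy relations to collapse $\tfrac{2\alpha}{A}$, $\tfrac{q}{B}$, $\tfrac{2^*\beta}{C}$ into a common factor), which the paper merely asserts as ``computing the derivative of the functional at the minimizer.''
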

\begin{proof}
The conclusion~\eqref{eqnWu0013} follows immediately from $(b)$ of Proposition~\ref{prop001}.  It follows from the definition of the extremal value $\mu^*_a$ given by \eqref{defi001} that $u\in\mathcal{P}^0_{a,\mu^*_a}$ if and only if $u$ is a minimizer of the variational problem~\eqref{defi001}.  Thus, by computing the derivative of the functional $\frac{\left(\|\nabla v\|_2^2\right)^\frac{2^*-q\gamma_q}{2^*-2}}{\|v\|_q^q\left(\|v\|_{2^*}^{2^*}\right)^{\frac{2-q\gamma_q}{2^*-2}}}$ at the minimizer $u$, we derive the equation of $u$ which is given by \eqref{eqnWu0014}.
\end{proof}

\vskip0.12in

As in \cite{AlbuquerqueSilva2020}, we introduce the set $\hat{\mathcal{P}}_{a,\mu}=\left\{u\in \mathcal{S}_a| \mu<\mu(u)\right\}$.  Then by Proposition~\ref{prop001} and $(a)$ of Proposition~\ref{propmup}, it is easy to see that 
\begin{eqnarray*}
\hat{\mathcal{P}}_{a,\mu}=\left\{(u)_s|s>0,u\in \mathcal{P}^+_{a,\mu}\cup \mathcal{P}^-_{a,\mu}\right\}.
\end{eqnarray*}
Let $\overline{\hat{\mathcal{P}}}_{a,\mu^*_a}$ be the closure of $\hat{\mathcal{P}}_{a,\mu^*_a}$ in the $H^1(\bbr^N)$ topology, then by Proposition~\ref{prop001} and $(a)$ of Proposition~\ref{propmup} once more, we immediately have the following.
\begin{lemma}\label{lemWu001}
Let $N\ge 3$, $2<q<2+\frac{4}{N}$ and $a>0$.  Then
\begin{equation*}
\overline{\hat{\mathcal{P}}}_{a,\mu^*_a}=\hat{\mathcal{P}}_{a,\mu_a^*}\cup\left\{(u)_s|s>0, u\in \mathcal{P}_{a,\mu^*_a}^0\right\}.
 \end{equation*}
\end{lemma}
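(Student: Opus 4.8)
The plan is to rewrite both sides purely in terms of the level sets of the $0$-homogeneous functional $\mu(\cdot)=\mu_{2^*}(\cdot)$ and then prove the two inclusions separately. First I would record that $\mu$ is continuous and strictly positive on $\mathcal{S}_a$ (the denominators $\|u\|_q^q$ and $\|u\|_{2^*}^{2^*}$ never vanish on $\mathcal{S}_a$, and all three norms are $H^1$-continuous by the Sobolev embedding), and that $\mu(u)\ge\mu^*_a$ for every $u\in\mathcal{S}_a$ by \eqref{defi001}. By definition $\hat{\mathcal{P}}_{a,\mu^*_a}=\{u\in\mathcal{S}_a\mid\mu(u)>\mu^*_a\}$, while the degenerate case Proposition~\ref{prop001}$(b)$ together with the $0$-homogeneity in Proposition~\ref{propmup}$(a)$ identifies $\{u\in\mathcal{S}_a\mid\mu(u)=\mu^*_a\}$ with the scaling orbit $\{(u)_s\mid s>0,\ u\in\mathcal{P}^0_{a,\mu^*_a}\}$. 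Since $\mu\ge\mu^*_a$ everywhere, these two sets exhaust $\mathcal{S}_a$, so the right-hand side of the lemma is exactly $\{\mu\ge\mu^*_a\}$, split into its strict and equality parts, and it remains to identify its $H^1$-closure.

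The inclusion $\subseteq$ is immediate: $\mathcal{S}_a$ is closed in $H^1(\bbr^N)$, so any $H^1$-limit $v$ of a sequence $\{u_n\}\subset\hat{\mathcal{P}}_{a,\mu^*_a}$ lies in $\mathcal{S}_a$, and by continuity of $\mu$ we get $\mu(v)=\lim_n\mu(u_n)\ge\mu^*_a$; hence $v$ belongs either to $\hat{\mathcal{P}}_{a,\mu^*_a}$ or to the orbit of $\mathcal{P}^0_{a,\mu^*_a}$.

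For $\supseteq$ the containment $\hat{\mathcal{P}}_{a,\mu^*_a}\subseteq\overline{\hat{\mathcal{P}}}_{a,\mu^*_a}$ is trivial, so the whole content, and the main obstacle, is to show that every $v\in\mathcal{S}_a$ with $\mu(v)=\mu^*_a$ is an $H^1$-limit of points with $\mu>\mu^*_a$; a priori the minimizing set could have nonempty interior, and a second-variation analysis is delicate because the minimum may be degenerate. Rather than that, I would produce an explicit approximating family by splitting $v$ into two far-apart bumps. Each such $v$ solves the Euler--Lagrange equation \eqref{eqnWu0014} of Proposition~\ref{prop002}, hence is smooth and exponentially decaying; fixing a unit vector $e_1\in\bbr^N$ and $\theta\in(0,\pi/2)$, set $\phi_R=v(\cdot-Re_1)$ and consider the renormalized $w_{\theta,R}=c_{\theta,R}\,(\cos\theta\,v+\sin\theta\,\phi_R)\in\mathcal{S}_a$, where $c_{\theta,R}=\sqrt{a}/\|\cos\theta\,v+\sin\theta\,\phi_R\|_2\to1$ as $R\to\infty$. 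Using the exponential decay to kill all cross terms as the supports separate, one gets $\|\nabla w_{\theta,R}\|_2^2\to\|\nabla v\|_2^2$, $\|w_{\theta,R}\|_q^q\to(\cos^q\theta+\sin^q\theta)\|v\|_q^q$ and $\|w_{\theta,R}\|_{2^*}^{2^*}\to(\cos^{2^*}\theta+\sin^{2^*}\theta)\|v\|_{2^*}^{2^*}$, so that
\[
\lim_{R\to\infty}\mu(w_{\theta,R})=\frac{\mu(v)}{(\cos^q\theta+\sin^q\theta)\,(\cos^{2^*}\theta+\sin^{2^*}\theta)^{\frac{2-q\gamma_q}{2^*-2}}}.
\]
Because $q,2^*>2$ we have $\cos^q\theta+\sin^q\theta<1$ and $\cos^{2^*}\theta+\sin^{2^*}\theta<1$ for $\theta\in(0,\pi/2)$, while the exponent $\frac{2-q\gamma_q}{2^*-2}$ is positive precisely because $q<2+\frac{4}{N}$ forces $q\gamma_q<2$; hence the denominator is $<1$ and the limit strictly exceeds $\mu(v)=\mu^*_a$. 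Thus for each small $\theta>0$ I can choose $R_\theta$ large with $\mu(w_{\theta,R_\theta})>\mu^*_a$, and letting $\theta\downarrow0$ gives $w_{\theta,R_\theta}\to v$ in $H^1(\bbr^N)$ (since $\|\sin\theta\,\phi_{R_\theta}\|_{H^1}=\sin\theta\,\|v\|_{H^1}\to0$ and $c_{\theta,R_\theta}\to1$). This exhibits $v\in\overline{\hat{\mathcal{P}}}_{a,\mu^*_a}$ and completes the proof.
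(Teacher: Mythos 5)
Your proof is correct, and it in fact supplies an argument that the paper omits. The paper's entire justification for this lemma is that it follows ``immediately'' from Proposition~\ref{prop001} and Proposition~\ref{propmup}$(a)$; those two facts do give your reformulation --- the right-hand side is precisely $\left\{u\in\mathcal{S}_a\mid\mu(u)\geq\mu^*_a\right\}=\mathcal{S}_a$ split into its strict and equality parts --- and hence the inclusion $\subseteq$, but they do not by themselves give the reverse inclusion, which amounts to the density of $\left\{\mu>\mu^*_a\right\}$ in $\mathcal{S}_a$, i.e.\ to ruling out that the minimizing set $\left\{\mu(u)=\mu^*_a\right\}$ has interior points. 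You correctly isolate this as the only nontrivial content and settle it with the two-bump family $c_{\theta,R}\left(\cos\theta\,v+\sin\theta\,v(\cdot-Re_1)\right)$; the limit computation is right, since the factor $(\cos^q\theta+\sin^q\theta)\left(\cos^{2^*}\theta+\sin^{2^*}\theta\right)^{\frac{2-q\gamma_q}{2^*-2}}$ lies strictly in $(0,1)$ because $q,2^*>2$ and $q\gamma_q<2$, so $\mu$ strictly exceeds $\mu^*_a$ in the limit while the family returns to $v$ in $H^1(\bbr^N)$ as $\theta\downarrow0$. Two minor remarks: you do not actually need the regularity and decay coming from Proposition~\ref{prop002}, since the vanishing of the cross terms under separation of the bumps holds for arbitrary $H^1$ functions by a Brezis--Lieb type splitting; and it is worth noting explicitly that $\mu(cu)=c^{q(\gamma_q-1)}\mu(u)$, so the normalizing constant $c_{\theta,R}\to1$ perturbs $\mu$ only by a factor tending to $1$, which is harmless. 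In short, your argument is a genuine proof of a step the paper asserts without justification.
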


\vskip0.12in

Clearly, by the definitions of $\hat{\mathcal{P}}_{a,\mu}$ and $\overline{\hat{\mathcal{P}}}_{a,\mu^*_a}$ and Proposition~\ref{prop001}, we have $\hat{\mathcal{P}}_{a,\mu}=\mathcal{S}_a$ for $\mu\in (0, \mu^*_a)$ and $\overline{\hat{\mathcal{P}}}_{a,\mu^*_a}=\mathcal{S}_a$.  Moreover, by Lemma~\ref{lemWu001}, we can define two functionals $\tau^\mp_{\mu^*_a}:\overline{\hat{\mathcal{P}}}_{a,\mu^*_a}\to \mathbb{R}$
by
\begin{equation}\label{equ016}
\tau^\mp_{\mu^*_a}(u)=
\begin{cases}t_{\mu^*_a, 2^*}^\mp(u),\quad u\in\hat{\mathcal{P}}_{a,\mu^*_a},
 \\
t_{\mu^*_a, 2^*}^0(u),\quad u\in\overline{\hat{\mathcal{P}}}_{a,\mu^*_a}\setminus \hat{\mathcal{P}}_{a,\mu^*_a}.
\end{cases}
\end{equation}
\begin{lemma}\label{lemWu002}
Let $N\ge 3$, $2<q<2+\frac{4}{N}$ and $a>0$.  Then for any $v\in\mathcal{S}_a$,
\begin{itemize}
    \item [$(a)$]\quad $t^{\mp}_{\mu, 2^*}(v)$ are $C^1$ in terms of $\mu\in (0, \mu^*_a)$ with $t^{+}_{\mu, 2^*}(v)$ increasing and $t^{-}_{\mu, 2^*}(v)$ decreasing.  
   \item [$(b)$]\quad  $\Psi_\mu\left((v)_{t_{\mu, 2^*}^\mp(v)}\right)$ is $C^1$ and decreasing in terms of $\mu\in (0, \mu^*_a)$.
   \item [$(c)$]\quad We have $\lim_{\mu\uparrow \mu^*_a} t_{\mu, 2^*}^\mp(v)=\tau^\mp_{\mu^*_a}(v)$ and 
   \begin{eqnarray*}
   \lim_{\mu\uparrow \mu^*_a} \Psi_\mu\left((v)_{t_{\mu, 2^*}^\mp(v)}\right)=\Psi_{\mu^*_a}\left((v)_{\tau^\mp_{\mu^*_a}(v)}\right),
   \end{eqnarray*}
where $\tau^\mp_{\mu^*_a}(v)$ are given by \eqref{equ016}.
\end{itemize}
\end{lemma}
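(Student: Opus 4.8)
The plan is to view the two nondegenerate critical points of the fibering map as implicitly defined zeros of its $s$-derivative, and then to extract all three conclusions from the implicit function theorem together with the sign information supplied by Proposition~\ref{prop001}. Fix $v\in\mathcal{S}_a$ (recall $\hat{\mathcal{P}}_{a,\mu}=\mathcal{S}_a$ for $\mu\in(0,\mu^*_a)$, so the statement is nonvacuous) and write
\begin{equation*}
F_v(\mu,s):=\Phi_{\mu,2^*,v}'(s)=s\|\nabla v\|_2^2-\mu\gamma_q s^{q\gamma_q-1}\|v\|_q^q-s^{2^*-1}\|v\|_{2^*}^{2^*},
\end{equation*}
so that for $\mu\in(0,\mu^*_a)$ the points $t^{\pm}_{\mu,2^*}(v)$ are exactly the two positive zeros of $s\mapsto F_v(\mu,s)$, with $t^{+}_{\mu,2^*}(v)<s_{2^*}(v)<t^{-}_{\mu,2^*}(v)$. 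By Proposition~\ref{prop001}, $F_v(\mu,\cdot)$ changes sign from negative to positive at $t^{+}_{\mu,2^*}(v)$ and from positive to negative at $t^{-}_{\mu,2^*}(v)$; hence both are simple zeros, so that $\partial_s F_v(\mu,t^{+}_{\mu,2^*}(v))>0$ and $\partial_s F_v(\mu,t^{-}_{\mu,2^*}(v))<0$.

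For $(a)$, since $F_v$ is $C^\infty$ on $(0,\mu^*_a)\times(0,\infty)$ and $\partial_s F_v\ne0$ at each of the two zeros, the implicit function theorem yields that $\mu\mapsto t^{\pm}_{\mu,2^*}(v)$ are $C^1$ (indeed $C^\infty$) with
\begin{equation*}
\frac{d}{d\mu}t^{\pm}_{\mu,2^*}(v)=-\frac{\partial_\mu F_v(\mu,t^{\pm}_{\mu,2^*}(v))}{\partial_s F_v(\mu,t^{\pm}_{\mu,2^*}(v))}.
\end{equation*}
Since $\partial_\mu F_v(\mu,s)=-\gamma_q s^{q\gamma_q-1}\|v\|_q^q<0$ for every $s>0$, the derivative has the sign opposite to that of $\partial_s F_v$: it is positive at $t^{+}_{\mu,2^*}(v)$ and negative at $t^{-}_{\mu,2^*}(v)$, which is exactly the asserted monotonicity.

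For $(b)$, set $g^{\mp}(\mu):=\Psi_\mu\big((v)_{t^{\mp}_{\mu,2^*}(v)}\big)=\Phi_{\mu,2^*,v}(t^{\mp}_{\mu,2^*}(v))$, which is $C^1$ by $(a)$ and the smoothness of $\Phi_{\mu,2^*,v}$. Differentiating and using that $t^{\mp}_{\mu,2^*}(v)$ is a critical point of $\Phi_{\mu,2^*,v}$, the envelope term $\Phi_{\mu,2^*,v}'(t^{\mp}_{\mu,2^*}(v))\tfrac{d}{d\mu}t^{\mp}_{\mu,2^*}(v)$ drops out and leaves
\begin{equation*}
\frac{d}{d\mu}g^{\mp}(\mu)=\partial_\mu\Phi_{\mu,2^*,v}(t^{\mp}_{\mu,2^*}(v))=-\frac{\big(t^{\mp}_{\mu,2^*}(v)\big)^{q\gamma_q}}{q}\|v\|_q^q<0,
\end{equation*}
so $g^{\mp}$ is decreasing.

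For $(c)$ I would split according to the two branches in \eqref{equ016}. If $v\in\hat{\mathcal{P}}_{a,\mu^*_a}$ then $\mu(v)>\mu^*_a$, so the two simple zeros of $F_v(\mu,\cdot)$ persist on an open interval containing $\mu^*_a$; the functions $t^{\mp}_{\mu,2^*}(v)$ extend $C^1$ past $\mu^*_a$ and the stated limits are mere continuity with $\tau^{\mp}_{\mu^*_a}(v)=t^{\mp}_{\mu^*_a,2^*}(v)$. The genuinely delicate case is $v\in\overline{\hat{\mathcal{P}}}_{a,\mu^*_a}\setminus\hat{\mathcal{P}}_{a,\mu^*_a}$, i.e. $\mu(v)=\mu^*_a$, where the two critical points must coalesce at the degenerate one $s_{2^*}(v)=t^0_{\mu^*_a,2^*}(v)$. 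By $(a)$, $t^{+}_{\mu,2^*}(v)$ is increasing and bounded above by $s_{2^*}(v)$, while $t^{-}_{\mu,2^*}(v)$ is decreasing and bounded below by $s_{2^*}(v)$; both therefore converge to limits $\ell^{\pm}$ with $0<\ell^{+}\le s_{2^*}(v)\le\ell^{-}<\infty$ as $\mu\uparrow\mu^*_a$. Passing to the limit in $F_v(\mu,t^{\pm}_{\mu,2^*}(v))=0$ via the continuity of $F_v$ gives $F_v(\mu^*_a,\ell^{\pm})=0$, so each $\ell^{\pm}$ is a positive critical point of $\Phi_{\mu^*_a,2^*,v}$; but since $\mu^*_a=\mu(v)$, part~$(b)$ of Proposition~\ref{prop001} forces $s_{2^*}(v)$ to be the unique such point, whence $\ell^{\pm}=s_{2^*}(v)=\tau^{\mp}_{\mu^*_a}(v)$. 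The energy limit then follows from the joint continuity of $(\mu,s)\mapsto\Phi_{\mu,2^*,v}(s)$. The main obstacle is precisely this coalescence: one must exclude that the two branches stall at distinct limits, and it is the combination of the monotonicity from $(a)$ with the uniqueness of the critical point at $\mu=\mu(v)$ that closes the argument.
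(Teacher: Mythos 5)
Your proposal is correct and follows essentially the same route as the paper: the implicit function theorem applied to $\Phi_{\mu,2^*,v}'(s)=0$ (the paper uses the equivalent function $H(\mu,t)=t\,\Phi_{\mu,2^*,v}'(t)$) for $(a)$, the envelope computation giving $\frac{d}{d\mu}\Psi_\mu\bigl((v)_{t^{\mp}_{\mu,2^*}(v)}\bigr)=-\frac{1}{q}\bigl(t^{\mp}_{\mu,2^*}(v)\bigr)^{q\gamma_q}\|v\|_q^q<0$ for $(b)$, and monotone limits combined with the uniqueness of the critical point of $\Phi_{\mu(v),2^*,v}$ for $(c)$. The only loose step is the inference ``changes sign, hence simple zero'' (a sign change alone does not preclude a degenerate zero); the required nondegeneracy $\partial_sF_v(\mu,t^{\pm}_{\mu,2^*}(v))\neq0$ instead follows from the strict inequalities in the memberships $(v)_{t^{\pm}_{\mu,2^*}(v)}\in\mathcal{P}^{\pm}_{a,\mu}$ guaranteed by Proposition~\ref{prop001}, which is exactly what the paper invokes.
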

\begin{proof}
$(a)$\quad The idea of the proof is to apply the implicit function theorem which is similar to that in proving \cite[Lemma~3.2]{WeiWu2022}, so we only sketch it here.  Let
\begin{eqnarray*}
H(\mu,t)=t^2\|\nabla v\|_2^2-\mu\gamma_qt^{q\gamma_q}\|v\|_q^q-t^{2^*}\|v\|^{2^*}_{2^*}.
\end{eqnarray*}
By $(v)_{t^{\mp}_{\mu, 2^*}(v)}\in\mathcal{P}^\mp_{a,\mu}$, it is easy to see that $H\left(\mu,t^{\mp}_{\mu, 2^*}(v)\right)=0$ and $\frac{\partial H(\mu, t^{\mp}_{\mu, 2^*}(v))}{\partial t}\not=0$.  Thus, by the implicit function theorem, $t^{\mp}_{\mu, 2^*}(v)$ are $C^1$ in terms of $\mu\in (0, \mu^*_{a})$ with
\begin{equation*}
\frac{d(t_{\mu, 2^*}^\mp(v))}{d\mu}=\frac{\gamma_qt_{\mu, 2^*}^\mp(v)^{q\gamma_q+1}\|v\|_q^q}{2t_{\mu, 2^*}^\mp(v)^2\|\nabla v\|_2^2-\mu q\gamma_q^2t_{\mu, 2^*}^\mp(v)^{q\gamma_q}\|v\|_q^q-2^*t_{\mu, 2^*}^\mp(v)^{2^*}\|v\|_{2^*}^{2^*}}.
\end{equation*}
In particular, $\frac{\partial t_{\mu, 2^*}^+(v)}{\partial \mu}>0$ and $\frac{\partial t_{\mu, 2^*}^-(v)}{\partial \mu}<0$ for all $\mu\in (0, \mu^*_{a})$.  

\vskip0.06in

$(b)$\quad By the conclusion~$(a)$, $\Psi_{\mu}\left((v)_{t_{\mu, 2^*}^\mp(v)}\right)$ is $C^1$ in terms of $\mu\in (0, \mu^*_{a})$, moreover, by $(v)_{t^{\mp}_{\mu, 2^*}(v)}\in\mathcal{P}^\mp_{a,\mu}$, we also have
\begin{align*}
\frac{d\Psi_{\mu}\left((v)_{t_{\mu, 2^*}^\mp(v)}\right)}{d\mu}&=\frac{\partial}{\partial \mu}\Psi_\mu\left((v)_{t_{\mu, 2^*}^\mp(v)}\right)+\frac{\partial}{\partial t_{\mu, 2^*}^\mp(v)}\Psi_\mu\left((v)_{t_{\mu, 2^*}^\mp(v)}\right)\frac{dt_{\mu, 2^*}^\mp(v)}{d\mu}\\
&=\frac{\partial}{\partial \mu}\Psi_\mu\left((v)_{t_{\mu, 2^*}^\mp(v)}\right)\\
&=-\frac{\left(t_{\mu, 2^*}^\mp(v)\right)^{q\gamma_q}}{q}\|v\|_q^q\\
&<0.
\end{align*}
Thus, $\Psi_{\mu}\left((v)_{t_{\mu, 2^*}^\mp(v)}\right)$ is also decreasing in terms of $\mu\in (0, \mu^*_{a})$.

\vskip0.06in

$(c)$\quad Since $\hat{\mathcal{P}}_{a,\mu}=\mathcal{S}_a$ for $\mu\in (0, \mu^*_a)$, 
for any $u\in\mathcal{S}_a$, by Proposition~\ref{prop001}, there exists $t^{\mp}_{\mu, 2^*}(u)$ such that $t^{+}_{\mu, 2^*}(u)<t^{-}_{\mu, 2^*}(u)$, 
\begin{equation*}
\left\{\array{ll}\left(t^{-}_{\mu, 2^*}(u)\right)^2\|\nabla u\|_2^2-\left(t^{-}_{\mu, 2^*}(u)\right)^{2^*}\|u\|_{2^*}^{2^*}-\mu\gamma_q \left(t^{-}_{\mu, 2^*}(u)\right)^{q\gamma_q}\|u\|_q^q=0,\\
2\left(t^{-}_{\mu, 2^*}(u)\right)^2\|\nabla u\|_2^2-2^*\left(t^{-}_{\mu, 2^*}(u)\right)^{2^*}\|u\|_{2^*}^{2^*}-\mu q\gamma_q^2\left(t^{-}_{\mu, 2^*}(u)\right)^{q\gamma_q}\|u\|_q^q<0,\endarray\right.
\end{equation*}
and
\begin{equation*}
\left\{\array{ll}\left(t^{+}_{\mu, 2^*}(u)\right)^2\|\nabla u\|_2^2-\left(t^{+}_{\mu, 2^*}(u)\right)^{2^*}\|u\|_{2^*}^{2^*}-\mu\gamma_q \left(t^{+}_{\mu, 2^*}(u)\right)^{q\gamma_q}\|u\|_q^q=0,\\
2\left(t^{+}_{\mu, 2^*}(u)\right)^2\|\nabla u\|_2^2-2^*\left(t^{+}_{\mu, 2^*}(u)\right)^{2^*}\|u\|_{2^*}^{2^*}-\mu q\gamma_q^2\left(t^{+}_{\mu, 2^*}(u)\right)^{q\gamma_q}\|u\|_q^q>0.\endarray\right.
\end{equation*}
By the the conclusion~$(a)$, we may assume that 
$t^{\mp}_{\mu, 2^*}(u)\to \overline{t}^{\mp}(u)$ as $\mu\uparrow\mu_a^*$.  Note that we also have $\overline{\hat{\mathcal{P}}}_{a,\mu^*_a}=\mathcal{S}_a$.  Thus, by Lemma~\ref{lemWu001}, either $u\in\hat{\mathcal{P}}_{a,\mu_a^*}$ or $u\in\left\{(v)_s|s>0, v\in \mathcal{P}_{a,\mu^*_a}^0\right\}$.  If $u\in\hat{\mathcal{P}}_{a,\mu_a^*}$ then $\mu^*_a<\mu(u)$.  It follows from Proposition~\ref{prop001} that $\overline{t}^{\mp}(u)=t^{\mp}_{\mu_a^*, 2^*}(u)$.  If $u\in\left\{(v)_s|s>0, v\in \mathcal{P}_{a,\mu^*_a}^0\right\}$ then $\mu^*_a=\mu(u)$,
\begin{equation*}
\left\{\array{ll}\left(\overline{t}^{-}(u)\right)^2\|\nabla u\|_2^2-\left(\overline{t}^{-}(u)\right)^{2^*}\|u\|_{2^*}^{2^*}-\mu^*_a\gamma_q \left(\overline{t}^{-}(u)\right)^{q\gamma_q}\|u\|_q^q=0,\\
2\left(\overline{t}^{-}(u)\right)^2\|\nabla u\|_2^2-2^*\left(\overline{t}^{-}(u)\right)\|u\|_{2^*}^{2^*}-\mu^*_a q\gamma_q^2\left(\overline{t}^{-}(u)\right)^{q\gamma_q}\|u\|_q^q\leq0,\endarray\right.
\end{equation*}
and
\begin{equation*}
\left\{\array{ll}\left(\overline{t}^{+}(u)\right)^2\|\nabla u\|_2^2-\left(\overline{t}^{+}(u)\right)^{2^*}\|u\|_{2^*}^{2^*}-\mu^*_a\gamma_q \left(\overline{t}^{+}(u)\right)^{q\gamma_q}\|u\|_q^q=0,\\
2\left(\overline{t}^{+}(u)\right)^2\|\nabla u\|_2^2-2^*\left(\overline{t}^{+}(u)\right)^{2^*}\|u\|_{2^*}^{2^*}-\mu^*_a q\gamma_q^2\left(\overline{t}^{+}(u)\right)^{q\gamma_q}\|u\|_q^q\geq0.\endarray\right.
\end{equation*}
It follows from $u=(v)_s$ for some $s>0$ and $v\in \mathcal{P}_{a,\mu^*_a}^0$ that $\overline{t}^{+}(u)=\overline{t}^{-}(u)=t^{0}_{\mu_a^*, 2^*}(u)$.  The conclusion of $t_{\mu, 2^*}^\mp(u)$ then follows immediately from the definitions of $\tau^\mp_{\mu^*_a}(u)$ given by \eqref{equ016}.  The conclusions of $\Psi_\mu\left((v)_{t_{\mu, 2^*}^\pm(v)}\right)$ follows from the continuity of $\Psi_\mu\left((v)_{t_{\mu, 2^*}^\pm(v)}\right)$ in terms of $\mu$.
\end{proof}

\vskip0.12in

As in \cite{AlbuquerqueSilva2020}, we also introduce the following two variational problems:
\begin{equation*}
\hat{\Psi}_{\mu^*_a}^\pm=\inf\left\{\Psi_{\mu^*_a}\left(u\right)\mid u\in \mathcal{P}^\pm_{a,\mu^*_a}\cup \mathcal{P}^0_{a,\mu^*_a}\right\}.
\end{equation*}
\begin{proposition}\label{prop007}
Let $N\ge 3$, $2<q<2+\frac{4}{N}$ and $a>0$.  Then 
\begin{itemize}
    \item [$(a)$]\quad $m^\mp(a,\mu)$ are decreasing in terms of $\mu\in(0, \mu^*_a)$ with
\begin{equation}\label{eqnWu0015}
\lim_{\mu\uparrow\mu^*_a}m^\mp(a,\mu)=\hat{\Psi}_{\mu^*_a}^\mp.
\end{equation}
In particular, $\hat{\Psi}_{\mu^*_a}^-\geq \hat{\Psi}_{\mu^*_a}^+$.
    \item [$(b)$]\quad for all $u_b\in \mathcal{P}^\mp_{b,\mu^*_a}$ with $b<a$, we have
\begin{eqnarray}\label{eqnWu0016}
\Psi_{\mu^*_a}(u_b)>\Psi_{\mu^*_a}\left(\left(\sqrt{\frac{a}{b}}u_b\right)_{\tau^\mp_{\mu^*_a}\left(\sqrt{\frac{a}{b}}u_b\right)}\right).
\end{eqnarray}
In particular, $m^\mp(b,\mu^*_a)\ge \hat{\Psi}_{\mu^*_a}^\mp$.
\end{itemize}
\end{proposition}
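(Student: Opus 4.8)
For part $(a)$ the plan is to rewrite $m^\mp(a,\mu)$ through the fibering maps. Since $\hat{\mathcal{P}}_{a,\mu}=\mathcal{S}_a$ for every $\mu\in(0,\mu^*_a)$ (recall $\mu(u)\ge\mu^*_a$ on $\mathcal{S}_a$), Proposition~\ref{prop001} shows that $v\mapsto(v)_{t^\mp_{\mu,2^*}(v)}$ maps $\mathcal{S}_a$ onto $\mathcal{P}^\mp_{a,\mu}$ and fixes each point of $\mathcal{P}^\mp_{a,\mu}$ (for $u\in\mathcal{P}^\mp_{a,\mu}$ one has $t^\mp_{\mu,2^*}(u)=1$), so that
\[
m^\mp(a,\mu)=\inf_{v\in\mathcal{S}_a}\Psi_\mu\big((v)_{t^\mp_{\mu,2^*}(v)}\big).
\]
By $(b)$ of Lemma~\ref{lemWu002}, for each fixed $v$ the map $\mu\mapsto\Psi_\mu((v)_{t^\mp_{\mu,2^*}(v)})$ is decreasing, and taking the infimum over $v$ preserves this, so $m^\mp(a,\mu)$ is decreasing on $(0,\mu^*_a)$ and its limit as $\mu\uparrow\mu^*_a$ exists.

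To identify that limit with $\hat{\Psi}^\mp_{\mu^*_a}$ I would prove two matching inequalities. For ``$\le$'', fix any $u\in\mathcal{P}^\mp_{a,\mu^*_a}\cup\mathcal{P}^0_{a,\mu^*_a}$; then $\tau^\mp_{\mu^*_a}(u)=1$, and since $(u)_{t^\mp_{\mu,2^*}(u)}\in\mathcal{P}^\mp_{a,\mu}$ we get $m^\mp(a,\mu)\le\Psi_\mu((u)_{t^\mp_{\mu,2^*}(u)})$; letting $\mu\uparrow\mu^*_a$ and invoking $(c)$ of Lemma~\ref{lemWu002} gives $\lim_{\mu\uparrow\mu^*_a}m^\mp(a,\mu)\le\Psi_{\mu^*_a}(u)$, and the infimum over such $u$ yields the bound $\le\hat{\Psi}^\mp_{\mu^*_a}$. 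For ``$\ge$'', the decreasing function above stays above its limit $\Psi_{\mu^*_a}((v)_{\tau^\mp_{\mu^*_a}(v)})$ for each $v$ (again $(c)$ of Lemma~\ref{lemWu002}); since $(v)_{\tau^\mp_{\mu^*_a}(v)}\in\mathcal{P}^\mp_{a,\mu^*_a}\cup\mathcal{P}^0_{a,\mu^*_a}$ by \eqref{equ016} and Lemma~\ref{lemWu001}, taking the infimum over $v$ gives $m^\mp(a,\mu)\ge\hat{\Psi}^\mp_{\mu^*_a}$ for every $\mu<\mu^*_a$, whence the limit is $\ge\hat{\Psi}^\mp_{\mu^*_a}$. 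This proves \eqref{eqnWu0015}, and $\hat{\Psi}^-_{\mu^*_a}\ge\hat{\Psi}^+_{\mu^*_a}$ follows by passing to the limit in $m^-(a,\mu)\ge m^+(a,\mu)$, i.e. $(a)$ of Lemma~\ref{lem001}.

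For part $(b)$ the plan is a monotonicity-in-mass argument parallel to $(b)$ of Lemma~\ref{lem001}, carried out at $\mu=\mu^*_a$. For $c\in[b,a]$ put $w_c=\sqrt{c/b}\,u_b\in\mathcal{S}_c$ and $\mathcal{E}(c)=\Psi_{\mu^*_a}\big((w_c)_{\tau^\mp_{\mu^*_a}(w_c)}\big)$, with the scaling parameter defined relative to mass $c$ as in \eqref{equ016}. Since $\mu(w_c)\ge\mu^*_c>\mu^*_a$ for all $c<a$ (using $\mu(v)\ge\mu^*_{\|v\|_2^2}$ and the strict monotonicity of $a\mapsto\mu^*_a$ from $(b)$ of Proposition~\ref{propmup}), Proposition~\ref{prop001} gives two nondegenerate critical points and $\tau^\mp_{\mu^*_a}(w_c)=t^\mp_{\mu^*_a,2^*}(w_c)$, which is $C^1$ on $[b,a)$ by the implicit function theorem. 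Expanding $\mathcal{E}(c)$ in the variables $\tau$ and $c$ and differentiating, the contribution of $d\tau/dc$ vanishes because the Pohozaev identity defining $\mathcal{P}_{c,\mu^*_a}$ is precisely $\partial_\tau\mathcal{E}=0$; substituting that identity into the remaining term collapses it to
\[
\mathcal{E}'(c)=\frac{\mu^*_a(\gamma_q-1)}{2c}\big(\tau^\mp_{\mu^*_a}(w_c)\big)^{q\gamma_q}\Big(\frac{c}{b}\Big)^{q/2}\|u_b\|_q^q<0,
\]
since $\gamma_q<1$. Because $u_b\in\mathcal{P}^\mp_{b,\mu^*_a}$ gives $\tau^\mp_{\mu^*_a}(w_b)=1$ and hence $\mathcal{E}(b)=\Psi_{\mu^*_a}(u_b)$, strict decrease yields $\mathcal{E}(b)>\mathcal{E}(a)$, which is \eqref{eqnWu0016}. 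Finally $(w_a)_{\tau^\mp_{\mu^*_a}(w_a)}\in\mathcal{P}^\mp_{a,\mu^*_a}\cup\mathcal{P}^0_{a,\mu^*_a}$, so \eqref{eqnWu0016} gives $\Psi_{\mu^*_a}(u_b)>\hat{\Psi}^\mp_{\mu^*_a}$, and the infimum over $u_b$ delivers $m^\mp(b,\mu^*_a)\ge\hat{\Psi}^\mp_{\mu^*_a}$.

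The main obstacle is the degenerate endpoint $c=a$: there $w_a=\sqrt{a/b}\,u_b$ may satisfy $\mu(w_a)=\mu^*_a$, in which case $(w_a)_{\tau^\mp_{\mu^*_a}(w_a)}$ lies on $\mathcal{P}^0_{a,\mu^*_a}$, the value $\tau^\mp_{\mu^*_a}(w_a)=t^0_{\mu^*_a,2^*}(w_a)$ is a degenerate critical point, and both the implicit function theorem and the envelope computation break down exactly at $c=a$. I would circumvent this by running the derivative argument only on $[b,a)$ and passing to the limit $c\uparrow a$ through the continuity of $c\mapsto\tau^\mp_{\mu^*_a}(w_c)$ and of $\mathcal{E}$, established just as in $(c)$ of Lemma~\ref{lemWu002} (the two nondegenerate branches $t^\pm$ merge continuously into $t^0$ as $\mu(w_c)\downarrow\mu^*_a$). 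Strict decrease on $[b,a)$ together with continuity on $[b,a]$ then gives the strict inequality $\mathcal{E}(b)>\mathcal{E}(a)$.
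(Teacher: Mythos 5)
Your proposal is correct and follows essentially the same route as the paper: part $(a)$ is the same combination of the monotonicity and limit statements of Lemma~\ref{lemWu002} with approximate minimizers, and part $(b)$ is the same scheme of strict monotonicity in the mass on $[b,a)$ followed by a continuity argument at the degenerate endpoint $c=a$ (the paper realizes this via a sequence $a_n\uparrow a$ and Lemma~\ref{lem001}$(b)$). The only cosmetic difference is that you unpack the envelope/derivative computation that the paper delegates to Lemma~\ref{lem001}$(b)$ (itself referred to \cite{WeiWu2022}), and your explicit formula for $\mathcal{E}'(c)$ checks out.
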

\begin{proof}
$(a)$\quad By $(b)$ of Lemma~\ref{lemWu002}, it is easy to see that $m^\mp(a,\mu)$ are decreasing in terms of $\mu\in(0, \mu^*_a)$.  It follows from $(c)$ of Lemma~\ref{lemWu002} that $\lim_{\mu\to\mu^*_a}m^\mp(a,\mu)\geq\hat{\Psi}_{\mu^*_a}^\mp$.  On the other hand, for any $\epsilon>0$, we can find $v_{\epsilon}\in\mathcal{P}^\pm_{a,\mu^*_a}\cup \mathcal{P}^0_{a,\mu^*_a}$ such that $\Psi_{\mu^*_a}\left(v_\epsilon\right)<\hat{\Psi}_{\mu^*_a}^\pm+\epsilon$.  By Proposition~\ref{prop001} and $(c)$ of Lemma~\ref{lemWu002}, there exist $t_{\mu, 2^*}^\mp(v_{\epsilon})\to1$ as $\mu\uparrow\mu^*_a$ such that $(v_{\epsilon})_{t_{\mu, 2^*}^\mp(v_{\epsilon})}\in\mathcal{P}^\mp_{a,\mu}$.  It follows that
\begin{equation*}
\lim_{\mu\uparrow\mu^*_a}m^\mp(a,\mu)\leq\lim_{\mu\uparrow\mu^*_a}\Psi_{\mu}\left((v_{\epsilon})_{t_{\mu, 2^*}^\mp(v_{\epsilon})}\right)=\Psi_{\mu^*_a}\left(v_{\epsilon}\right)<\hat{\Psi}_{\mu^*_a}^\mp+\epsilon.
\end{equation*}
By the arbitrariness of $\epsilon>0$, we also have $\lim_{\mu\uparrow\mu^*_a}m^\mp(a,\mu)\leq\hat{\Psi}_{\mu^*_a}^\mp$, which implies that \eqref{eqnWu0015} holds true.  By \eqref{eqnWu0015} and $(a)$ of Lemma~\ref{lem001}, we also have $\hat{\Psi}_{\mu^*_a}^-\geq \hat{\Psi}_{\mu^*_a}^+$.

\vskip0.06in

$(b)$\quad Since $b<a$, $\mathcal{P}^\mp_{b,\mu^*_a}\not=\emptyset$ by Proposition~\ref{prop001} and $(b)$ of Proposition~\ref{propmup}.  Let $u_b\in \mathcal{P}^\mp_{b,\mu^*_a}$ and $\{a_n\}\subset\bbr$ such that $a_n\uparrow a$ as $n\to\infty$, then by $(b)$ of Proposition~\ref{propmup}, $(b)$ of Lemma~\ref{lem001} and $(c)$ of Lemma~\ref{lemWu002}, 
\begin{eqnarray*}
\Psi_{\mu^*_a}(u_b)>\Psi_{\mu^*_a}\left(\left(\sqrt{\frac{a_n}{b}}u_b\right)_{t^\mp_{\mu^*_a}\left(\sqrt{\frac{a_n}{b}}u_b\right)}\right)=\Psi_{\mu^*_a}\left(\left(\sqrt{\frac{a}{b}}u_b\right)_{\tau^\mp_{\mu^*_a}\left(\sqrt{\frac{a}{b}}u_b\right)}\right)+o(1).
\end{eqnarray*}
Thus, \eqref{eqnWu0016} holds true.  By the arbitrariness of $u_b\in \mathcal{P}^\mp_{b,\mu^*_a}$, we also have $m^\pm(b,\mu^*_a)\ge \hat{\Psi}_{\mu^*_a}^\mp$.
\end{proof}

\vskip0.12in

\subsection{The existence of ground-state solutions}
In this section, we shall mainly prove the following.
\begin{proposition}\label{propWu0003}
Let $N\ge 3$, $2<q<2+\frac{4}{N}$ and $a>0$.  Then the variational problem
\begin{equation}\label{eqnWu0018}
\hat{\Psi}_{\mu^*_a}^+=\inf\left\{\Psi_{\mu^*_a}\left(u\right)\mid u\in \mathcal{P}^+_{a,\mu^*_a}\cup \mathcal{P}^0_{a,\mu^*_a}\right\}
\end{equation}
is achieved by some $u_{a,\mu^*_a,+}\in \mathcal{P}^+_{a,\mu^*_a}$, which is real valued, positive, radially symmetric and radially decreasing.
Moreover, $u_{a,\mu^*_a,+}$ also satisfies the Schr\"{o}dinger equation~\eqref{eqd106} for a suitable Lagrange multiplier $\lambda=\lambda_{a, \mu^*_a, +}<0$.
\end{proposition}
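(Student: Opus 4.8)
The plan is to establish \eqref{eqnWu0018} by a perturbation argument in the parameter $\mu$: rather than minimizing directly on the possibly degenerate set $\mathcal{P}^+_{a,\mu^*_a}\cup\mathcal{P}^0_{a,\mu^*_a}$, I would fix a sequence $\mu_n\uparrow\mu^*_a$ and take $w_n:=u_{a,\mu_n,+}$ to be the ground states produced by Proposition~\ref{propWu0001}. Each $w_n$ is positive, radially symmetric and radially decreasing, lies in $\mathcal{P}^+_{a,\mu_n}$, solves \eqref{eqd106} at $\mu_n$ with a Lagrange multiplier $\lambda_n<0$, and by $(a)$ of Proposition~\ref{prop007} satisfies $\Psi_{\mu_n}(w_n)=m^+(a,\mu_n)\to\hat{\Psi}^+_{\mu^*_a}$. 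First I would show $\{w_n\}$ is bounded in $H^1(\bbr^N)$: combining the Pohozaev constraint $\|\nabla w_n\|_2^2=\mu_n\gamma_q\|w_n\|_q^q+\|w_n\|_{2^*}^{2^*}$ with the defining $\mathcal{P}^+$ inequality yields $\|w_n\|_{2^*}^{2^*}\lesssim\|w_n\|_q^q$, hence $\|\nabla w_n\|_2^2\lesssim\|w_n\|_q^q\lesssim\|\nabla w_n\|_2^{q\gamma_q}$ by the Gagliardo--Nirenberg inequality, which bounds $\|\nabla w_n\|_2$ since $q\gamma_q<2$. Passing to a subsequence, $w_n\rightharpoonup u_0$ in $H^1(\bbr^N)$ and, by Strauss's radial lemma, $w_n\to u_0$ in $L^r(\bbr^N)$ for all $2<r<2^*$, with $u_0$ radial, nonnegative and radially decreasing.

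Next I would identify the limiting equation. The Pohozaev identity for $w_n$ gives $\lambda_n a=\mu_n(\gamma_q-1)\|w_n\|_q^q$, so $\lambda_n\to\lambda_0=\mu^*_a(\gamma_q-1)\|u_0\|_q^q/a\le0$, with $\lambda_0<0$ precisely when $u_0\neq0$. To exclude $u_0=0$ I would observe that in that case $\|w_n\|_q^q\to0$ forces $\|w_n\|_{2^*}^{2^*}\to0$ and $\|\nabla w_n\|_2^2\to0$, whence $\Psi_{\mu_n}(w_n)\to0$; but $\hat{\Psi}^+_{\mu^*_a}=\lim_{\mu\uparrow\mu^*_a}m^+(a,\mu)\le m^+(a,\mu_1)<0$ by $(a)$ of Proposition~\ref{prop007} and $(a)$ of Lemma~\ref{lem001}, a contradiction. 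Thus $u_0\neq0$, $\lambda_0<0$, and passing to the weak limit (using $|w_n|^{2^*-2}w_n\rightharpoonup|u_0|^{2^*-2}u_0$ in $L^{(2^*)'}(\bbr^N)$) shows $u_0$ solves $-\Delta u_0=\lambda_0u_0+\mu^*_a|u_0|^{q-2}u_0+|u_0|^{2^*-2}u_0$. By elliptic regularity and the Pohozaev identity, $u_0\in\mathcal{P}_{a_1,\mu^*_a}$ with $a_1=\|u_0\|_2^2\le a$, and passing to the limit in the $\mathcal{P}^+$ inequality for $w_n$ together with the Brezis--Lieb lemma gives $u_0\in\mathcal{P}^+_{a_1,\mu^*_a}\cup\mathcal{P}^0_{a_1,\mu^*_a}$.

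The heart of the argument is the compactness. Setting $v_n=w_n-u_0$, the Brezis--Lieb lemma and the two Pohozaev identities (for $w_n$ at $\mu_n$, for $u_0$ at $\mu^*_a$) give $\|\nabla v_n\|_2^2=\|v_n\|_{2^*}^{2^*}+o(1)$, so by the Sobolev inequality either $\|\nabla v_n\|_2^2\to0$ or $\|\nabla v_n\|_2^2\ge S^{N/2}+o(1)$, while $\hat{\Psi}^+_{\mu^*_a}=\Psi_{\mu^*_a}(u_0)+\frac1N\lim_n\|\nabla v_n\|_2^2$. Here I would use two facts to force strong convergence: by $(b)$ of Proposition~\ref{propmup} the extremal value is strictly decreasing in the mass, so $\mu^*_a<\mu^*_{a_1}$ whenever $a_1<a$, and hence $\mathcal{P}^0_{a_1,\mu^*_a}=\emptyset$ by $(c)$ of Proposition~\ref{prop001}; consequently $u_0\in\mathcal{P}^+_{a_1,\mu^*_a}$ and, by $(b)$ of Proposition~\ref{prop007}, $\Psi_{\mu^*_a}(u_0)\ge m^+(a_1,\mu^*_a)\ge\hat{\Psi}^+_{\mu^*_a}$ (the case $a_1=a$ being immediate from the definition of $\hat{\Psi}^+_{\mu^*_a}$). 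Combined with the energy identity this yields $\lim_n\|\nabla v_n\|_2^2\le0$, so $v_n\to0$ in $\dot H^1(\bbr^N)$ and $\Psi_{\mu^*_a}(u_0)=\hat{\Psi}^+_{\mu^*_a}$; then the strict monotonicity \eqref{eqnWu0016} rules out $a_1<a$, giving $a_1=a$, $v_n\to0$ in $H^1(\bbr^N)$, and that $u_0$ achieves \eqref{eqnWu0018}.

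It remains to show $u_0\in\mathcal{P}^+_{a,\mu^*_a}$, i.e. that the minimizer is nondegenerate. Suppose instead $u_0\in\mathcal{P}^0_{a,\mu^*_a}$; then Proposition~\ref{prop002} forces $u_0$ to satisfy both the solution equation with multiplier $\lambda_0$ and the equation \eqref{eqnWu0014}. Eliminating $\Delta u_0$ between the two relations leads, on $\{u_0>0\}$, to a pointwise identity of the form $c_0+c_1\,u_0^{q-2}-c_2\,u_0^{2^*-2}\equiv0$ with $c_1=\mu^*_a(2-q\gamma_q)>0$ and $c_2=2^*-2>0$; since $u_0$ is radially decreasing and nonconstant while $q-2\neq2^*-2$, this is impossible, so $u_0\in\mathcal{P}^+_{a,\mu^*_a}$. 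Finally, $u_0$ being nonnegative, radial and a nontrivial solution, the strong maximum principle gives $u_0>0$, and setting $u_{a,\mu^*_a,+}:=u_0$, $\lambda_{a,\mu^*_a,+}:=\lambda_0<0$ completes the proof. I expect the compactness step---simultaneously excluding the bubbling $\|\nabla v_n\|_2^2\ge S^{N/2}$ and the mass defect $a_1<a$ at the extremal value, where $\mathcal{P}^0_{a,\mu^*_a}$ is genuinely present---to be the main obstacle.
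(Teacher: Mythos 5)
Your proposal is correct, and for the existence and compactness part it is essentially the paper's own argument: approximate $\mu^*_a$ from below by $\mu_n\uparrow\mu^*_a$, take the ground states from Proposition~\ref{propWu0001}, get boundedness from the $\mathcal{P}^+$ inequality plus Gagliardo--Nirenberg, exclude vanishing via $m^+(a,\mu_1)<0$, and combine the Brezis--Lieb splitting $\hat{\Psi}^+_{\mu^*_a}=\Psi_{\mu^*_a}(u_0)+\frac{1}{N}\lim_n\|\nabla v_n\|_2^2$ with the monotonicity statements of Proposition~\ref{prop007} to force $a_1=a$ and strong convergence. The paper organizes the endgame as two separate contradictions (\eqref{eqnWu0021} and \eqref{eqnWu0022}) rather than first deducing $v_n\to0$ in $\dot H^1$ and then invoking the strict inequality \eqref{eqnWu0016}, but the content is identical. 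One small citation slip: the emptiness of $\mathcal{P}^0_{a_1,\mu^*_a}$ for $a_1<a$ follows from $(b)$ of Proposition~\ref{prop001} (a degenerate critical point forces $\mu=\mu(u)\ge\mu^*_{a_1}>\mu^*_a$) together with $(b)$ of Proposition~\ref{propmup}, not from ``$(c)$ of Proposition~\ref{prop001}''; the fact itself is right.

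Where you genuinely diverge is the nondegeneracy step, i.e.\ ruling out $u_0\in\mathcal{P}^0_{a,\mu^*_a}$. The paper runs a second-order perturbation inside $\mathcal{S}_a$ (expanding $\Psi_{\mu^*_a}\bigl((u_\epsilon)_{\tau^+_{\mu^*_a}(u_\epsilon)}\bigr)$ with $u_\epsilon=s(\epsilon)u+\epsilon\varphi$) to show that either the energy can be strictly decreased --- impossible for a minimizer --- or $u_0$ is a constrained critical point of $\Psi_{\mu^*_a}|_{\mathcal{S}_a}$, and only in that last sub-case does it combine \eqref{eqd106} with \eqref{eqnWu0014} via Proposition~\ref{prop002} to reach the pointwise contradiction. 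You skip the perturbation entirely by observing that $u_0$, being the limit of exact solutions, already satisfies \eqref{eqd106}, so the two-equation contradiction applies at once; since $u_0>0$ is continuous, radially decreasing and tends to zero, its range is a nondegenerate interval on which $c_0+c_1t^{q-2}-c_2t^{2^*-2}$ cannot vanish identically. This is a legitimate and shorter route here (the paper itself establishes the equation for $u_0$ in its Step~1, so its perturbation argument is not strictly needed in this proof). What the paper's longer argument buys is a statement about arbitrary minimizers of \eqref{eqnWu0018} that does not presuppose they arise as limits of solutions, and it is precisely this manifold-perturbation step that is reused verbatim in Proposition~\ref{propWu0004}; your shortcut works there too provided the mountain-pass minimizer is likewise produced as a limit of solutions, which it is.
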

\begin{proof}
For the sake of clarity, we divide the proof into two steps.

\vskip0.06in

{\bf Step.~1}\quad We prove that the variational problem~\eqref{eqnWu0018} is achieved by some $u_{a,\mu^*_a,+}$, which is real valued, nonnegative, radially symmetric and radially decreasing.

Indeed, the proof in this step is similar to that of Proposition~\ref{propWu0001}, so we only sketch it.  Let $\mu_n\uparrow\mu^*_a$ as $n\to\infty$ and $u_n$ be the solution of the variational problem~\eqref{eqnWu0003} constructed by Proposition~\ref{propWu0001} for $\mu=\mu_n$, which are real valued, positive, radially symmetric and radially decreasing.  Then by \eqref{eqnWu0017}, $\{u_n\}$ is bounded in $H^1(\bbr^N)$.  Thus, up to a subsequence, $u_n\rightharpoonup u_{a,\mu^*_a,+}$ weakly in $H^1(\bbr^N)$ as $n\to\infty$ for some $u_{a,\mu^*_a,+}\in H^1(\bbr^N)$ and by Strauss's radial lemma (cf. \cite[Lemma~A. I']{BerestyckiLions1983} or \cite[Lemma~1.24]{Willem1996}), we also have $u_n\to u_{a,\mu^*_a,+}$ strongly in $L^r(\bbr^N)$ as $n\to\infty$ for all $2<r<2^*$.  Clearly, $u_{a,\mu^*_a,+}$ is real valued, nonnegative, radially symmetric and radially decreasing.  Recall that $u_n\in \mathcal{P}^+_{a,\mu_n}$ also satisfy the Schr\"{o}dinger equation~\eqref{eqd106} for a suitable Lagrange multiplier $\lambda=\lambda_{n}<0$.
Then 
\begin{align*}
\lambda_n a=(\gamma_q-1)\mu^*_a\|u_n\|_q^q+o(1)
\end{align*}
and
$u_{a,\mu^*_a,+}$ is a weak solution of the following equation:
\begin{equation*}
-\Delta u=\lambda_{a, \mu^*_a, +} u+\mu^*_a|u|^{q-2}u+|u|^{2^*-2}u\quad\text{in }\bbr^N,
\end{equation*}
where $\lambda_{a, \mu^*_a, +}=\lim_{n\to\infty}\lambda_n\leq0$.  It follows that $\lambda_{a, \mu^*_a, +}=0$ if and only if $u_{a,\mu^*_a,+}=0$.  Moreover, since by the standard elliptic regularity theorem, $u_{a,\mu^*_a,+}\in L^{\infty}_{loc}(\bbr^N)$, $u_{a,\mu^*_a,+}$ also satisfies the Pohozaev identity, that is,
\begin{equation*}
\|\nabla u_{a,\mu^*_a,+}\|_2^2=\|u_{a,\mu^*_a,+}\|_{2^*}^{2^*}+\mu^*_a\gamma_q\|u_{a,\mu^*_a,+}\|_q^q.
\end{equation*}
Thus, either $\|v_n\|^{2^*}_{2^*}=\|\nabla v_n\|_2^2=o(1)$ or $\|v_n\|^{2^*}_{2^*}\geq\|\nabla v_n\|_2^2+o(1)\ge S^{\frac{N}{2}}+o(1)$, where $v_n=u_n-u_{a,\mu^*_a,+}$.  Now, if $u_{a, \mu^*_a, +}=0$ then by similar discussions on $\{v_n\}$ as in the case~$(i)$ of the proof of Proposition~\ref{propWu0001} and $(a)$ of Proposition~\ref{prop007}, we have either $\hat{\Psi}_{\mu^*_a}^+=0$ or $\hat{\Psi}_{\mu^*_a}^+\geq\frac{1}{N}S^{\frac{N}{2}}$, which contradicts $(a)$ of Lemma~\ref{lem001} and $(a)$ of Proposition~\ref{prop007}.  Thus, we must have $u_{a, \mu^*_a, +}\not=0$ which also implies that $\lambda_{a, \mu^*_a, +}<0$.  In this case, by the Fatou lemma, we have $\|u_{a, \mu^*_a, +}\|_2^2=a_1\leq a$.  Now, if $a_1<a$ then by similar discussions on $\{v_n\}$ with $v_n=u_n-u_{a,\mu^*_a,+}$ as in the case~$(ii)$ of the proof of Proposition~\ref{propWu0001}, the Brezis-Lieb lemma (cf. \cite[Lemma~1.32]{Willem1996}) and Proposition~\ref{prop007},
\begin{align}\label{eqnWu0021}
\hat{\Psi}_{\mu^*_a}^+=\lim_{n\to\infty}\Psi_{\mu_n}(u_n)\geq \Psi_{\mu^*_a}\left(u_{a,\mu^*_a,+}\right)>\min\left\{\hat{\Psi}_{\mu^*_a}^+, \hat{\Psi}_{\mu^*_a}^-\right\}=\hat{\Psi}_{\mu^*_a}^+,
\end{align}
which is a contradiction.  If $v_n\not\to0$ strongly in $H^1(\bbr^N)$ as $n\to\infty$, then
\begin{align}\label{eqnWu0022}
\hat{\Psi}_{\mu^*_a}^+=\lim_{n\to\infty}\Psi_{\mu_n}(u_n)\geq \Psi_{\mu^*_a}\left(u_{a,\mu^*_a,+}\right)+\frac{1}{N}S^{\frac{N}{2}}\geq\hat{\Psi}_{\mu^*_a}^++\frac{1}{N}S^{\frac{N}{2}},
\end{align}
which is also a contradiction.
Thus, we must have $a_1=a$ and $v_n\to0$ strongly in $H^1(\bbr^N)$ as $n\to\infty$.  It follows that $u_{a,\mu^*_a,+}$ is a solution of the variational problem~\eqref{eqnWu0018}, which is real valued, nonnegative, radially symmetric and radially decreasing.

\vskip0.06in

{\bf Step.~2}\quad We prove that $u_{a,\mu^*_a,+}\in \mathcal{P}^+_{a,\mu^*_a}$ is positive and $u_{a,\mu^*_a,+}$ satisfies the Schr\"{o}dinger equation~\eqref{eqd106} for a suitable Lagrange multiplier $\lambda=\lambda_{a, \mu^*_a, +}<0$.

Indeed, since $u_n\in \mathcal{P}^+_{a,\mu_n}$ and $u_n\to u_{a,\mu^*_a,+}$ strongly in $H^1(\bbr^N)$ as $n\to\infty$, we must have $u_{a,\mu^*_a,+}\in \mathcal{P}^+_{a,\mu^*_a}\cup \mathcal{P}^0_{a,\mu^*_a}$.  Now, suppose the contrary that $u_{a,\mu^*_a,+}\in\mathcal{P}^0_{a,\mu^*_a}$, then by the fact that $u_{a,\mu^*_a,+}$ is a solution of the variational problem~\eqref{eqnWu0018}, we know that $u_{a,\mu^*_a,+}$ is also a solution of the following variational problem:
\begin{equation*}
m^0(a,\mu^*_a)=\inf_{u\in \mathcal{P}_{a, \mu^*_a}^0}\Psi_{\mu^*_a}(u).
\end{equation*}
For any $\varphi\in T_{u_{a,\mu^*_a,+}}\mathcal{S}_a$ where $T_{u_{a,\mu^*_a,+}}\mathcal{S}_a$ is the tangent space of $\mathcal{S}_a$ at $u_{a,\mu^*_a,+}$, by the implicit function theorem, there exists $s(\epsilon)=1-\frac{\|\varphi\|_2^2}{a}\epsilon^2$ for $\epsilon$ sufficiently small such that $s(\epsilon)u_{a,\mu^*_a,+}+\epsilon \varphi\in\mathcal{S}_a$.  We denote $u_\epsilon=s(\epsilon)u_{a,\mu^*_a,+}+\epsilon \varphi$.  By Proposition~\ref{prop001} and the definition of $\tau^+_{\mu^*_a}(u)$ given by \eqref{equ016}, $(u_\epsilon)_{\tau^+_{\mu^*_a}(u_\epsilon)}\in\mathcal{P}^+_{a,\mu^*_a}\cup \mathcal{P}^0_{a,\mu^*_a}$.  Since $u_{a,\mu^*_a,+}\in\mathcal{P}^0_{a,\mu^*_a}$, by the continuity, $\tau^+_{\mu^*_a}(u_\epsilon)\to1$ as $\epsilon\to0$.  Thus, without loss of generality, we may write $\tau^+_{\mu^*_a}(u_\epsilon)=1+\tau(\epsilon)$ where $\tau(\epsilon)\to0$ as $\epsilon\to0$.  Since $(u_\epsilon)_{\tau^+_{\mu^*_a}(u_\epsilon)}\in\mathcal{P}^+_{a,\mu^*_a}\cup \mathcal{P}^0_{a,\mu^*_a}$, we have
\begin{equation*}
\left(\tau^+_{\mu^*_a}(u_\epsilon)\right)^{2-q\gamma_q}\|\nabla u_\epsilon\|_2^2-\left(\tau^+_{\mu^*_a}(u_\epsilon)\right)^{2^*-q\gamma_q}\|u_\epsilon\|_{2^*}^{2^*}-\mu^*_a\gamma_q\|u_\epsilon\|_q^q=0,
\end{equation*}
which, together with $s(\epsilon)=1-\frac{\|\varphi\|_2^2}{a}\epsilon^2$, $\tau^+_{\mu^*_a}(u_\epsilon)=1+\tau(\epsilon)$ and $u_{a,\mu^*_a,+}\in\mathcal{P}^0_{a,\mu^*_a}$, implies that
 \begin{align*}
&-\frac{\|\varphi\|_2^2}{a}\epsilon^2\left(2\|\nabla u_{a,\mu^*_a,+}\|^2_2-2^*\|u_{a,\mu^*_a,+}\|_{2^*}^{2^*}-\mu^*_a q\gamma_q\|u_{a,\mu^*_a,+}\|_q^q\right)+o(\epsilon^2)\nonumber\\
&+\epsilon^2\left(\left\langle-\Delta\varphi-\frac{2^*(2^*-1)}{2}\left|u_{a,\mu^*_a,+}\right|^{2^*-2}\varphi-\mu^*_a\frac{q(q-1)\gamma_q}{2}\left|u_{a,\mu^*_a,+}\right|^{q-2}\varphi,\varphi\right\rangle_{L^2}\right)\nonumber\\
=&\tau^2(\epsilon)\left(\frac{(2^*-q\gamma_q)(2^*-q\gamma_q-1)}{2}\|u_{a,\mu^*_a,+}\|_{2^*}^{2^*}-\frac{(2-q\gamma_q)(1-q\gamma_q)}{2}\|\nabla u_{a,\mu^*_a,+}\|_2^2\right)\nonumber\\
&+\epsilon\tau(\epsilon)\left\langle2^*(2^*-q\gamma_q)|u_{a,\mu^*_a,+}|^{2^*-2}u_{a,\mu^*_a,+}+2(2-q\gamma_q)\Delta u_{a,\mu^*_a,+}, \varphi\right\rangle_{L^2}+o(\tau^2(\epsilon)).
\end{align*}
Since $u_{a,\mu^*_a,+}\in\mathcal{P}^0_{a,\mu^*_a}$, by $2^*>2$ and $2<q<2+\frac{4}{N}$, we have
\begin{eqnarray*}
\frac{(2^*-q\gamma_q)(2^*-q\gamma_q-1)}{2}\|u_{a,\mu^*_a,+}\|_{2^*}^{2^*}-\frac{(2-q\gamma_q)(1-q\gamma_q)}{2}\|\nabla u_{a,\mu^*_a,+}\|_2^2>0.
\end{eqnarray*}
It follows that $\left|\tau(\epsilon)\right|\lesssim\epsilon$ as $\epsilon\to0$ which, together with $u_{a,\mu^*_a,+}\in\mathcal{P}^0_{a,\mu^*_a}$, implies that
\begin{align*}
\Psi_{\mu_a^*}((u_\epsilon)_{\tau^+_{\mu^*_a}(u_\epsilon)})=&\frac{(\tau^+_{\mu^*_a}(u_\epsilon))^2}{2}\|\nabla u_\epsilon\|_2^2-\frac{(\tau^+_{\mu^*_a}(u_\epsilon))^{2^*}}{2^*}\|u_\epsilon\|_{2^*}^{2^*}-\frac{\mu^*_a(\tau^+_{\mu^*_a}(u_\epsilon))^{q\gamma_q}}{q}\|u_\epsilon\|_q^q\nonumber\\
=&\frac{1}{2}\|\nabla u_{a,\mu^*_a,+}\|^2_2-\frac{1}{2^*}\|u_{a,\mu^*_a,+}\|_{2^*}^{2^*}-\frac{\mu^*_a}{q}\|u_{a,\mu^*_a,+}\|^q_q+o(\epsilon)\nonumber\\
&+\epsilon\left\langle-\Delta u_{a,\mu^*_a,+}-\left|u_{a,\mu^*_a,+}\right|^{2^*-2}u_{a,\mu^*_a,+}-\mu^*_a\left|u_{a,\mu^*_a,+}\right|^{q-2}u_{a,\mu^*_a,+},\varphi\right\rangle_{L^2}.
\end{align*}
If there exists $\varphi\in T_{u_{a,\mu^*_a,+}}\mathcal{S}_a$ such that
\begin{eqnarray}\label{eqnWu0019}
\left\langle-\Delta u_{a,\mu^*_a,+}-\left|u_{a,\mu^*_a,+}\right|^{2^*-2}u_{a,\mu^*_a,+}-\mu^*_a\left|u_{a,\mu^*_a,+}\right|^{q-2}u_{a,\mu^*_a,+},\varphi\right\rangle_{L^2}\not=0,
\end{eqnarray}
then we have 
\begin{equation*}
\hat{\Psi}_{\mu^*_a}^+\leq\Psi_{\mu_a^*}((u_\epsilon)_{\tau^+_{\mu^*_a}(u_\epsilon)})<\Psi_{\mu_a^*}(u_{a,\mu^*_a,+})=\hat{\Psi}_{\mu^*_a}^+
\end{equation*}
for $\epsilon$ sufficiently small which is a contradiction, and thus, $u_{a,\mu^*_a,+}\in \mathcal{P}^+_{a,\mu^*_a}$.  It follows from the method of the Lagrange multiplier that $u_{a,\mu^*_a,+}$ satisfies the Schr\"{o}dinger equation~\eqref{eqd106} for a suitable Lagrange multiplier $\lambda=\lambda_{a, \mu^*_a, +}<0$.  Since $u_{a,\mu^*_a,+}$ is nonnegative, by the maximum principle, $u_{a,\mu^*_a,+}$ is positive.  It remains to prove that there exists $\varphi\in T_{u_{a,\mu^*_a,+}}\mathcal{S}_a$ such that \eqref{eqnWu0019} holds true.  Suppose the contrary, then $u_{a,\mu^*_a,+}$ satisfies the Schr\"{o}dinger equation~\eqref{eqd106} for a suitable $\lambda\in\bbr$ in the weak sense.  By the standard elliptic regularity theorem, we know that $u_{a,\mu^*_a,+}$ is also a classical solution of the Schr\"{o}dinger equation~\eqref{eqd106}.  On the other hand, since $u_{a,\mu^*_a,+}\in\mathcal{P}^0_{a,\mu^*_a}$, by Proposition~\ref{prop002}, $u_{a,\mu^*_a,+}$ is also a solution of \eqref{eqnWu0014} for a suitable $\lambda'\in\bbr$.  Again, by the standard elliptic regularity theorem, we know that $u_{a,\mu^*_a,+}$ is also a classical solution of the Schr\"{o}dinger equation~\eqref{eqnWu0014}.  Thus, by \eqref{eqd106} and \eqref{eqnWu0014}, we know that $(2^*-2)|u_{a,\mu^*_a,+}|^{2^*-2}-\mu_a^*(2-q\gamma_q)|u_{a,\mu^*_a,+}|^{q-2}$ is a constant in $\bbr^N$, which contradicts $u_{a,\mu^*_a,+}\in H^1(\bbr^N)$ and $\|u_{a,\mu^*_a,+}\|_2^2=a>0$.  Thus, there exists $\varphi\in T_{u_{a,\mu^*_a,+}}\mathcal{S}_a$ such that \eqref{eqnWu0019} holds true.
\end{proof}

\vskip0.12in

\subsection{The existence of mountain-pass solutions}
In this section, we shall study the variational problem:
\begin{equation}\label{eqnWu0020}
\hat{\Psi}_{\mu^*_a}^-=\inf\left\{\Psi_{\mu^*_a}\left(u\right)\mid u\in \mathcal{P}^-_{a,\mu^*_a}\cup \mathcal{P}^0_{a,\mu^*_a}\right\}.
\end{equation}
We begin with the following estimate of $\hat{\Psi}_{\mu^*_a}^-$.
\begin{lemma}\label{lemWu003}
Let $N\geq3$, $2<q<2+\frac{4}{N}$ and $a>0$.  Then
\begin{eqnarray*}
\hat{\Psi}_{\mu^*_a}^-<\hat{\Psi}_{\mu^*_a}^++\frac{1}{N}S^{\frac{N}{2}}.
\end{eqnarray*}
\end{lemma}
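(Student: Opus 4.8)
The plan is to obtain the strict inequality by a direct test-function construction at the endpoint $\mu=\mu^*_a$, using as base point the ground state produced in Proposition~\ref{propWu0003}. I first observe that merely passing to the limit $\mu\uparrow\mu^*_a$ in Lemma~\ref{lem002}(a) and invoking \eqref{eqnWu0015} of Proposition~\ref{prop007} only yields the \emph{non-strict} bound $\hat{\Psi}^-_{\mu^*_a}\le\hat{\Psi}^+_{\mu^*_a}+\frac1N S^{\frac N2}$, since strictness is generally destroyed in the limit; a sharper argument performed directly at $\mu=\mu^*_a$ is therefore needed.

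Let $u_+=u_{a,\mu^*_a,+}\in\mathcal{P}^+_{a,\mu^*_a}$ be the positive, radial, radially decreasing ground state with $\Psi_{\mu^*_a}(u_+)=\hat{\Psi}^+_{\mu^*_a}$ given by Proposition~\ref{propWu0003}, and recall that $u_+$ solves \eqref{eqd106} for the Lagrange multiplier $\lambda_{a,\mu^*_a,+}<0$. Following the construction in the proof of Lemma~\ref{lem002}(a), set $\widehat{W}_{\ve,t}=u_++tW_\ve$ with $W_\ve=\chi(x)U_\ve$, where $U_\ve$ is the Aubin–Talenti bubble and $\chi$ a fixed cut-off supported near the origin, and rescale $\overline{W}_{\ve,t}=s^{\frac{N-2}{2}}\widehat{W}_{\ve,t}(sx)$ with $s=\frac{\|\widehat{W}_{\ve,t}\|_2}{\sqrt{a}}$, so that $\overline{W}_{\ve,t}\in\mathcal{S}_a$ for all $\ve,t>0$. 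Since $\overline{\hat{\mathcal{P}}}_{a,\mu^*_a}=\mathcal{S}_a$, Proposition~\ref{prop001} and the definition of $\tau^-_{\mu^*_a}$ in \eqref{equ016} provide the projection $(\overline{W}_{\ve,t})_{\tau^-_{\mu^*_a}(\overline{W}_{\ve,t})}\in\mathcal{P}^-_{a,\mu^*_a}\cup\mathcal{P}^0_{a,\mu^*_a}$, and, exactly as in the proof of Lemma~\ref{lem002}(a), one finds $t_\ve>0$ with $\tau^-_{\mu^*_a}(\overline{W}_{\ve,t_\ve})=1$, whence
\begin{equation*}
\hat{\Psi}^-_{\mu^*_a}\le\sup_{t\ge0}\Psi_{\mu^*_a}(\overline{W}_{\ve,t}).
\end{equation*}

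It then remains to prove the sharp estimate $\sup_{t\ge0}\Psi_{\mu^*_a}(\overline{W}_{\ve,t})<\Psi_{\mu^*_a}(u_+)+\frac1N S^{\frac N2}$ for $\ve>0$ small. I would expand the gradient and critical terms of $\Psi_{\mu^*_a}$ along $\widehat{W}_{\ve,t}$ by the standard Aubin–Talenti expansions, using the equation satisfied by $u_+$ to cancel the dominant linear-in-$t$ contributions; the interaction integral $\int u_+\,U_\ve^{2^*-1}$ then produces a strictly negative correction of order $\ve^{\frac{N-2}{2}}$ to the threshold $\Psi_{\mu^*_a}(u_+)+\frac1N S^{\frac N2}$, while the subcritical term $-\frac{\mu^*_a}{q}\|\widehat{W}_{\ve,t}\|_q^q$ only reinforces this gain. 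These are exactly the computations carried out in \cite[Lemma~3.1 and Remark~3.1]{WeiWu2022}, now performed at the fixed value $\mu=\mu^*_a$ with base point $u_+$, so they transfer with only notational changes. The main obstacle is controlling the sign \emph{uniformly in} $t$ after the rescaling onto $\mathcal{S}_a$: one must verify that the negative interaction correction strictly dominates all remainder and cut-off error terms in the supremum over $t\ge0$, and the hypotheses $N\ge3$ and $2<q<2+\frac4N$ are precisely what guarantee that this correction has the right order (including the familiar logarithmic subtlety in the low dimensions) for the strict inequality to survive as $\ve\to0$.
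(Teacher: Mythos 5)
Your proposal is correct and follows essentially the same route as the paper: both take the ground state $u_{a,\mu^*_a,+}\in\mathcal{P}^+_{a,\mu^*_a}$ from Proposition~\ref{propWu0003} as base point (so that the projection onto $\mathcal{P}^-_{a,\mu^*_a}$ is still available, since $\mu^*_a<\mu(u_{a,\mu^*_a,+})$), build the test family $u_{a,\mu^*_a,+}+tW_\ve$ rescaled to $\mathcal{S}_a$, and conclude by the energy expansion of \cite[Lemma~3.1 and Remark~3.1]{WeiWu2022} exactly as in Lemma~\ref{lem002}(a). Your opening remark that the limit $\mu\uparrow\mu^*_a$ alone would only give the non-strict inequality correctly identifies why the construction must be redone at $\mu=\mu^*_a$.
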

\begin{proof}
Since by Proposition~\ref{propWu0003}, the variational problem~\eqref{eqnWu0018} is achieved by some $u_{a,\mu^*_a,+}\in \mathcal{P}^+_{a,\mu^*_a}$, which is real valued, positive, radially symmetric and radially decreasing, we only need to prove $\hat{\Psi}_{\mu^*_a}^-<m^+(a,\mu^*_a)+\frac{1}{N}S^{\frac{N}{2}}$.
Note that by $u_{a,\mu^*_a,+}\in \mathcal{P}^+_{a,\mu^*_a}$ and Proposition~\ref{prop001}, $0<\mu<\mu\left(u_{a,\mu^*_a,+}\right)$.  Thus, by $(c)$ of Lemma~\ref{lemWu002}, the rest of the proof is the same as that of Lemma~\ref{lem002} with trivial modifications, so we omit it here.
\end{proof}

\vskip0.12in

With the above estimate of $\hat{\Psi}_{\mu^*_a}^-$, we can prove the following.
\begin{proposition}\label{propWu0004}
Let $N\ge 3$, $2<q<2+\frac{4}{N}$ and $a>0$.  Then the variational problem~\eqref{eqnWu0020}
is achieved by some $u_{a,\mu^*_a,-}\in \mathcal{P}^-_{a,\mu^*_a}$, which is real valued, positive, radially symmetric and radially decreasing.
Moreover, $u_{a,\mu^*_a,-}$ also satisfies the Schr\"{o}dinger equation~\eqref{eqd106} for a suitable Lagrange multiplier $\lambda=\lambda_{a, \mu^*_a, -}<0$.
\end{proposition}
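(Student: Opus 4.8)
The plan is to follow the two-step scheme used in the proof of Proposition~\ref{propWu0003}, replacing the ground-state level $\hat\Psi^+_{\mu^*_a}$ by the mountain-pass level $\hat\Psi^-_{\mu^*_a}$ and the degenerate-to-$\mathcal{P}^+$ analysis by its $\mathcal{P}^-$ counterpart. First I would manufacture an asymptotic minimizing sequence out of genuine solutions: fix $\mu_n\uparrow\mu^*_a$ and let $u_n=u_{a,\mu_n,-}$ be the mountain-pass solutions furnished by Proposition~\ref{propWu0002}, which are positive, radially symmetric, radially decreasing, lie in $\mathcal{P}^-_{a,\mu_n}$, solve \eqref{eqd106} with $\lambda_n<0$, and satisfy $\Psi_{\mu_n}(u_n)=m^-(a,\mu_n)\to\hat\Psi^-_{\mu^*_a}$ by $(a)$ of Proposition~\ref{prop007}. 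Writing $\Psi_{\mu_n}(u_n)=\frac1N\|\nabla u_n\|_2^2-\mu_n(\frac1q-\frac{\gamma_q}{2^*})\|u_n\|_q^q$ via the Pohozaev identity and invoking the Gagliardo--Nirenberg inequality (with $q\gamma_q<2$) yields $H^1$-boundedness; passing to a subsequence gives $u_n\rightharpoonup u_{a,\mu^*_a,-}$ weakly in $H^1(\bbr^N)$ and strongly in $L^r(\bbr^N)$ for $2<r<2^*$ by Strauss's radial lemma. Testing the equation with $u_n$ and using Pohozaev gives $\lambda_n a=\mu_n(\gamma_q-1)\|u_n\|_q^q$, so $\lambda_n\to\lambda_{a,\mu^*_a,-}\le0$ with equality to $0$ iff $u_{a,\mu^*_a,-}=0$, while the weak limit solves \eqref{eqd106} and hence satisfies the Pohozaev identity. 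Setting $v_n=u_n-u_{a,\mu^*_a,-}$, the Brezis--Lieb lemma then forces the dichotomy $\|\nabla v_n\|_2^2=\|v_n\|_{2^*}^{2^*}+o(1)$ with either both $o(1)$ or both $\ge S^{\frac N2}+o(1)$.

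I would next rule out vanishing. If $u_{a,\mu^*_a,-}=0$, then $\|u_n\|_q\to0$, and exactly as in case~$(i)$ of the proof of Proposition~\ref{propWu0002} the $\mathcal{P}^-$-constraint together with the above dichotomy gives $\|\nabla u_n\|_2^2\ge S^{\frac N2}+o(1)$, whence $\hat\Psi^-_{\mu^*_a}\ge\frac1N S^{\frac N2}$; this contradicts Lemma~\ref{lemWu003} combined with $\hat\Psi^+_{\mu^*_a}\le0$, the latter following from $(a)$ of Proposition~\ref{prop007} and $(a)$ of Lemma~\ref{lem001}. Thus $u_{a,\mu^*_a,-}\not=0$, $\lambda_{a,\mu^*_a,-}<0$, and by Fatou $\|u_{a,\mu^*_a,-}\|_2^2=a_1\le a$, while the energy splitting reads $\hat\Psi^-_{\mu^*_a}=\Psi_{\mu^*_a}(u_{a,\mu^*_a,-})+\frac1N L+o(1)$ with $L=\lim\|\nabla v_n\|_2^2\in\{0\}\cup[S^{\frac N2},\infty)$.

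The delicate point, which I expect to be the main obstacle, is that in contrast to the ground-state case a crude lower bound only gives $\Psi_{\mu^*_a}(u_{a,\mu^*_a,-})\ge\hat\Psi^+_{\mu^*_a}$, which is too weak to recover the level $\hat\Psi^-_{\mu^*_a}$. I would bypass this in two moves. If the bubble occurs ($L\ge S^{\frac N2}$), then using $\Psi_{\mu^*_a}(u_{a,\mu^*_a,-})\ge\hat\Psi^+_{\mu^*_a}$ (directly if $a_1=a$, and via the strict scaling inequality \eqref{eqnWu0016} of Proposition~\ref{prop007}$(b)$ together with $\hat\Psi^-_{\mu^*_a}\ge\hat\Psi^+_{\mu^*_a}$ if $a_1<a$) gives $\hat\Psi^-_{\mu^*_a}\ge\hat\Psi^+_{\mu^*_a}+\frac1N S^{\frac N2}$, contradicting Lemma~\ref{lemWu003}; hence $L=0$. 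With no bubble, passing to the limit in the strict inequality defining $\mathcal{P}^-_{a,\mu_n}$ (the terms $\|\nabla v_n\|_2^2$ and $\|v_n\|_{2^*}^{2^*}$ cancel to leading order) confines $u_{a,\mu^*_a,-}$ to $\mathcal{P}^-_{a_1,\mu^*_a}\cup\mathcal{P}^0_{a_1,\mu^*_a}$. Since $\mu^*_a<\mu^*_{a_1}$ for $a_1<a$ by the monotonicity in $(b)$ and $(d)$ of Proposition~\ref{propmup}, one has $\mathcal{P}^0_{a_1,\mu^*_a}=\emptyset$, so for $a_1<a$ the limit lies in $\mathcal{P}^-_{a_1,\mu^*_a}$ and the $-$ version of \eqref{eqnWu0016} yields $\Psi_{\mu^*_a}(u_{a,\mu^*_a,-})>\hat\Psi^-_{\mu^*_a}$, contradicting $\hat\Psi^-_{\mu^*_a}=\Psi_{\mu^*_a}(u_{a,\mu^*_a,-})$. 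Therefore $a_1=a$ and, since also $\|\nabla v_n\|_2\to0$, $v_n\to0$ strongly in $H^1(\bbr^N)$, so $\hat\Psi^-_{\mu^*_a}$ is achieved by a nonnegative radially decreasing minimizer $u_{a,\mu^*_a,-}\in\mathcal{P}^-_{a,\mu^*_a}\cup\mathcal{P}^0_{a,\mu^*_a}$.

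Finally, as in Step~2 of the proof of Proposition~\ref{propWu0003}, I would exclude $u_{a,\mu^*_a,-}\in\mathcal{P}^0_{a,\mu^*_a}$. Were it degenerate it would also minimize $m^0(a,\mu^*_a)$; perturbing along $\varphi\in T_{u_{a,\mu^*_a,-}}\mathcal{S}_a$ and reprojecting by $\tau^-_{\mu^*_a}$ (which coincides with $\tau^+_{\mu^*_a}=t^0_{\mu^*_a,2^*}$ at the degenerate point, so the estimate $|\tau(\epsilon)|\lesssim\epsilon$ and the strict positivity of $\frac{(2^*-q\gamma_q)(2^*-q\gamma_q-1)}{2}\|u_{a,\mu^*_a,-}\|_{2^*}^{2^*}-\frac{(2-q\gamma_q)(1-q\gamma_q)}{2}\|\nabla u_{a,\mu^*_a,-}\|_2^2$ carry over) produces a competitor in $\mathcal{P}^-_{a,\mu^*_a}\cup\mathcal{P}^0_{a,\mu^*_a}$ of strictly smaller energy unless the residual of \eqref{eqd106} vanishes; but then $u_{a,\mu^*_a,-}$ would solve both \eqref{eqd106} and \eqref{eqnWu0014}, forcing $(2^*-2)|u_{a,\mu^*_a,-}|^{2^*-2}-\mu^*_a(2-q\gamma_q)|u_{a,\mu^*_a,-}|^{q-2}$ to be constant on $\bbr^N$, which is incompatible with $u_{a,\mu^*_a,-}\in H^1(\bbr^N)$. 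Hence $u_{a,\mu^*_a,-}\in\mathcal{P}^-_{a,\mu^*_a}$, the Lagrange multiplier method gives that it solves \eqref{eqd106} with $\lambda_{a,\mu^*_a,-}<0$, and the maximum principle upgrades nonnegativity to positivity.
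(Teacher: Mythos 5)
Your overall scheme is the same as the paper's: approximate with the mountain-pass minimizers $u_{a,\mu_n,-}$ at levels $\mu_n\uparrow\mu^*_a$, obtain boundedness, pass to a radial weak limit satisfying the equation and the Pohozaev identity, exclude dichotomy/bubbling via Lemma~\ref{lemWu003} and the monotonicity statements of Proposition~\ref{prop007}, and finally exclude $u_{a,\mu^*_a,-}\in\mathcal{P}^0_{a,\mu^*_a}$ by the same perturbation-and-reprojection argument as in Step~2 of Proposition~\ref{propWu0003}. Your treatment of the case $u_{a,\mu^*_a,-}\neq0$ with $a_1<a$ (using \eqref{eqnWu0016} and the emptiness of $\mathcal{P}^0_{a_1,\mu^*_a}$ coming from the strict monotonicity of $a\mapsto\mu^*_a$) is in fact more explicit than the paper's sketch, and your Step~2 is correct.

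There is, however, one step that fails as written: the exclusion of $u_{a,\mu^*_a,-}=0$. You claim that, once $\|u_n\|_q\to0$, ``the $\mathcal{P}^-$-constraint together with the dichotomy gives $\|\nabla u_n\|_2^2\ge S^{N/2}+o(1)$, exactly as in case~$(i)$ of Proposition~\ref{propWu0002}.'' That argument does not transfer. In Proposition~\ref{propWu0002} the approximating solutions lie on $\mathcal{P}^-_{a,\mu,p_n}$ with $p_n<2^*$, and the \emph{subcritical} Gagliardo--Nirenberg inequality yields $\|\nabla u_n\|_2^2\lesssim\|u_n\|_{p_n}^{p_n}\lesssim\|\nabla u_n\|_2^{p_n\gamma_{p_n}}$ with $p_n\gamma_{p_n}>2$, forcing $\|\nabla u_n\|_2\gtrsim1$. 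Here the exponent is fixed at $2^*$ and only $\mu_n$ varies; the constraints $\|\nabla u_n\|_2^2=\mu_n\gamma_q\|u_n\|_q^q+\|u_n\|_{2^*}^{2^*}$ and $2\|\nabla u_n\|_2^2<\mu_n q\gamma_q^2\|u_n\|_q^q+2^*\|u_n\|_{2^*}^{2^*}$ together with the Sobolev inequality are perfectly compatible with $\|\nabla u_n\|_2\to0$ when $\|u_n\|_q\to0$, i.e.\ with the branch $\hat{\Psi}^-_{\mu^*_a}=0$ of the dichotomy. And $\hat{\Psi}^-_{\mu^*_a}=0$ is \emph{not} contradicted by the facts you invoke ($\hat{\Psi}^-_{\mu^*_a}<\hat{\Psi}^+_{\mu^*_a}+\frac1NS^{N/2}$ and $\hat{\Psi}^+_{\mu^*_a}\le0$), since these only rule out the bubbling branch. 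The paper closes exactly this hole with an additional ingredient you omit: on the degenerate set one computes
\begin{equation*}
\hat{\Psi}_{\mu^*_a}^-\le\inf_{u\in \mathcal{P}_{a,\mu^*_a}^0}\Psi_{\mu^*_a}(u)=-\frac{2-q\gamma_q}{Nq\gamma_q}\inf_{u\in \mathcal{P}_{a,\mu^*_a}^0}\|\nabla u\|_2^2<0,
\end{equation*}
so that $\hat{\Psi}^-_{\mu^*_a}=0$ is impossible. You need this (or some other proof that $\hat{\Psi}^-_{\mu^*_a}<0$) to make the vanishing case genuinely contradictory.
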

\begin{proof}
For the sake of clarity, we also divide the proof into two steps.

\vskip0.06in

{\bf Step.~1}\quad We prove that the variational problem~\eqref{eqnWu0020} is achieved by some $u_{a,\mu^*_a,-}$, which is real valued, nonnegative, radially symmetric and radially decreasing.

Again, the proof in this step is similar to that of Proposition~\ref{propWu0002} and Proposition~\ref{propWu0003}, so we only sketch it.  Let $\mu_n\uparrow\mu^*_a$ as $n\to\infty$ and $u_n$ be the solution of the variational problem~\eqref{eqnWu0004} constructed by Proposition~\ref{propWu0002} for $\mu=\mu_n$, which are real valued, positive, radially symmetric and radially decreasing.  Since $2<q<2+\frac{4}{N}<p<2^*$, by $(a)$ of Proposition~\ref{prop007} and similar estimates in the proof of \cite[Lemma~4.1]{Soave2020-1}, we know that $\{u_n\}$ is bounded in $H^1(\bbr^N)$.  Thus, up to a subsequence, $u_n\rightharpoonup u_{a,\mu^*_a,-}$ weakly in $H^1(\bbr^N)$ as $n\to\infty$ for some $u_{a,\mu^*_a,-}\in H^1(\bbr^N)$ and by Strauss's radial lemma (cf. \cite[Lemma~A. I']{BerestyckiLions1983} or \cite[Lemma~1.24]{Willem1996}), we also have $u_n\to u_{a,\mu^*_a,-}$ strongly in $L^r(\bbr^N)$ as $n\to\infty$ for all $2<r<2^*$.  Clearly, $u_{a,\mu^*_a,-}$ is real valued, nonnegative, radially symmetric and radially decreasing.  By similar arguments in the proof of Proposition~\ref{propWu0003}, we know that
$u_{a,\mu^*_a,-}$ is a weak solution of the following equation
\begin{equation*}
-\Delta u=\lambda_{a, \mu^*_a, -} u+\mu^*_a|u|^{q-2}u+|u|^{2^*-2}u\quad\text{in }\bbr^N,
\end{equation*}
with $\lambda_{a, \mu^*_a, -}=0$ if and only if $u_{a,\mu^*_a,-}=0$, and $u_{a,\mu^*_a,-}$ also satisfies the Pohozaev identity
\begin{equation*}
\|\nabla u_{a,\mu^*_a,-}\|_2^2=\|u_{a,\mu^*_a,-}\|_{2^*}^{2^*}+\mu^*_a\gamma_q\|u_{a,\mu^*_a,-}\|_q^q.
\end{equation*}
Similar to that of Proposition~\ref{propWu0003}, if $u_{a,\mu^*_a,-}=0$ then either $\hat{\Psi}_{\mu^*_a}^-=0$ or $\hat{\Psi}_{\mu^*_a}^-\geq\frac{1}{N}S^{\frac{N}{2}}$.  However, by the definition of $\hat{\Psi}_{\mu^*_a}^-$ and $2<q<2+\frac{4}{N}$, we have
\begin{eqnarray*}
\hat{\Psi}_{\mu^*_a}^-\leq\inf_{u\in \mathcal{P}_{a, \mu}^0}\Psi_\mu(u)=-\frac{2-q\gamma_q}{Nq\gamma_q}\inf_{u\in \mathcal{P}_{a, \mu}^0}\|\nabla u\|_2^2<0.
\end{eqnarray*}
Thus, we must have $u_{a,\mu^*_a,-}\not=0$.  By the Fatou lemma, we have $\|u_{a, \mu^*_a, -}\|_2^2=a_1\leq a$.  Now, by similar estimates of \eqref{eqnWu0021} and \eqref{eqnWu0022} and Lemma~\ref{lemWu003}, we must have $a_1=a$ and $v_n=u_n-u_{a, \mu^*_a, -}\to0$ strongly in $H^1(\bbr^N)$ as $n\to\infty$.  It follows that $u_{a,\mu^*_a,-}$ is a solution of the variational problem~\eqref{eqnWu0020}, which is real valued, nonnegative, radially symmetric and radially decreasing.

\vskip0.06in

{\bf Step.~2}\quad We prove that $u_{a,\mu^*_a,-}\in \mathcal{P}^-_{a,\mu^*_a}$ is positive and $u_{a,\mu^*_a,-}$ satisfies the Schr\"{o}dinger equation~\eqref{eqd106} for a suitable Lagrange multiplier $\lambda=\lambda_{a, \mu^*_a, -}<0$.

The proof in this step is the same as that of Step.~2 of the proof of Proposition~\ref{propWu0003} since that argument is only dependent of the property of the degenerate submanifold $\mathcal{P}^0_{a,\mu^*_a}$, so we omit it here.
\end{proof}

\vskip0.12in

We close this section by the proof of Theorem~\ref{thm001}.

\vskip0.06in

\noindent\textbf{Proof of Theorem~\ref{thm001}:}\quad It follows immediately from Propositions~\ref{propWu0001}, \ref{propWu0002}, \ref{propWu0003} and \ref{propWu0004}.
\hfill$\Box$

\section{Acknowledgments}
The research of this article is supported by NSFC (No. 12171470).

\end{document}